\newif\ifdebug
\newif \iffig
\newif \iftable
\title[Cyclotomic family of thin monodromy in $\operatorname{Sp}_4(\mathbb{R})$]{A cyclotomic family of thin hypergeometric monodromy groups in $\operatorname{Sp}_4(\mathbb{R})$}
\thanks{Revised \textsc{\today}, 
MSC class:
33C05, 32S40, 32G20, 20F65, 37D20}
\date{May 2021}
\author{
	Simion Filip
}
\address{
	\parbox{0.5\textwidth}{
		Department of Mathematics\\
		University of Chicago\\
		5734 S University Ave\\
		Chicago, IL 60637}
}
\email{{sfilip@math.uchicago.edu}}
\author{
	Charles Fougeron
}
\address{
	\parbox{0.5\textwidth}{
		IRIF - Université de Paris\\
		8 place Nemours\\
		Bureau 3302\\
		75013 Paris}
}
\email{{charles.fougeron@math.cnrs.fr }}
\begin{document}

\begin{abstract}
	We exhibit an infinite family of discrete subgroups of $\Sp_4(\bR)$ which have a number of remarkable properties.
	Our results are established by showing that each group plays ping-pong on an appropriate set of cones.
	The groups arise as the monodromy of hypergeometric differential equations with parameters $\left(\tfrac{N-3}{2N},\tfrac{N-1}{2N}, \tfrac{N+1}{2N}, \tfrac{N+3}{2N}\right)$ at infinity and maximal unipotent monodromy at zero, for any integer $N\geq 4$.

	Additionally, we relate the cones used for ping-pong in $\bR^4$ with crooked surfaces, which we then use to exhibit domains of discontinuity for the monodromy groups in the Lagrangian Grassmannian.
\end{abstract}

%
\maketitle
%
\noindent \hrulefill
\tableofcontents
\noindent \hrulefill
\ifdebug
   \listoffixmes
\fi


\section{Introduction}
	\label{sec:introduction}

The monodromy of hypergeometric differential equations has been actively studied for a long time.
A historical overview going back to the 19th century can be found in the book of Gray \cite{Gray2008_Linear-differential-equations-and-group-theory-from-Riemann}, and the more recent developments relevant to the present text started with the work of Beukers--Heckman \cite{BeukersHeckman1989_Monodromy-for-the-hypergeometric-function-nFn-1} who analyzed the basic features of the monodromy groups of hypergeometric equations on $\bP^1(\bC)\setminus \{0,1,\infty\}$.
In particular, they described the Zariski closure of the discrete groups which arise.

A more refined question about the monodromy group is what is its relation to the ambient arithmetic lattice.
The most interesting case is when the monodromy representation identifies the  fundamental group of the base (in an orbifold sense) with the corresponding arithmetic lattice, and this leads to uniformization of algebraic manifolds by domains.
A representative example is the congruence subgroup $\Gamma(2)$ of $\SL_2(\bZ)$.
In a different direction, the representation can surject (with large kernel) onto a finite index subgroup of the arithmetic group.
Results of this nature have been obtained recently by Singh--Venkataramana \cite{SinghVenkataramana2014_Arithmeticity-of-certain-symplectic-hypergeometric-groups} (see also Detinko--Flannery--Hulpke \cite{DetinkoFlanneryHulpke2018_Zariski-density-and-computing-in-arithmetic-groups}).
Finally, the image of the representation\footnote{So far, all known thin examples appear also to be (essentially) injective.} can be an infinite index subgroup of the lattice, which is called ``thin''.
This is the case of interest to us.

An infinite family of thin monodromy groups has been obtained by Fuchs, Meiri, and Sarnak \cite{FuchsMeiriSarnak2014_Hyperbolic-monodromy-groups-for-the-hypergeometric-equation-and-Cartan-involutions}.
These are discrete subgroups of the indefinite orthogonal group $\SO_{1,n}(\bR)$, finitely many for each $n$, and arbitrarily large $n$.
By different methods, Brav and Thomas \cite{BravThomas2014_Thin-monodromy-in-Sp4} exhibited 7 parameters for which the monodromy group is thin in $\Sp_4(\bZ)\subset \Sp_4(\bR)$.

In this text, we extend the methods of Brav--Thomas and exhibit an infinite family of parameters for which the monodromy yields discrete subgroups in $\Sp_4(\bR)$.
These are moreover thin, when viewed inside appropriately chosen lattices.
Note that as our ambient dimension is fixed at $4$, the matrix entries of the groups will necessarily lie in number fields of increasing size.
This family of parameters initially emerged from numerical experiments on Lyapunov exponents of hypergeometric differential equations \cite{Fougeron19}.

\subsubsection*{Parameters for hypergeometric equations}
	\label{sssec:parameters_for_hypergeometric_equations}
We will consider rank $4$ hypergeometric groups with maximal unipotent monodromy at zero (see \autoref{sec:background_on_hypergeometric_groups} for more background on hypergeometric equations).
This leads to the differential operator
\[
	D^4 - z
	(D+\alpha_1)
	(D+\alpha_2)
	(D+\alpha_3)
	(D+\alpha_4) \quad D = z\partial_z
\]
which has regular singular points at $0,1,\infty \in \bP^1(\bC)$.
The $\alpha$-parameters that we consider are $\left(\frac{N-3}{2N}, \frac{N-1}{2N}, \frac{N+1}{2N}, \frac{N+3}{2N}\right)$ for $N\geq 4$.
We let $\Gamma_N\subset \Sp_4(\bR)$ be the monodromy group of the equation, and $P\Gamma_N\subset \PSp_4(\bR)$ its image in the projectivized symplectic group.
Let $R$ be the monodromy at infinity and $T$ the monodromy around $1$.

\begin{theoremintro}[Thin, discrete monodromy]
	\label{thmintro:thin_discrete_monodromy}
	The projective monodromy group is isomorphic, as an abstract group, to:
	\[
		P\Gamma_N \isom \ip{R,T \, | \, R^N = 1 \,}
	\]
	Furthermore, it is a discrete subgroup of $\PSp_4(\bR)$.

	The full monodromy group $\Gamma_N$ is isomorphic to $P\Gamma_N$ if $N$ is odd, and is a $\bZ/2$ central extension if $N$ is even.
	It is also a discrete subgroup of $\Sp_4(\bR)$.

	Furthermore, denote by $Y_N$ the orbifold $\bP^1\setminus \{0,1\}$ with an orbifold point of order $N$ at infinity.
	Then the monodromy representation is an isomorphism of $\pi_1^{orb}(Y_N)$ and $P\Gamma_N$.
\end{theoremintro}

\begin{proof}
	In \autoref{ssec:reflection_structure} we construct a group generated by three reflections $\tilde{\Gamma}_N$ that contains $\Gamma_N$ with index $2$.
	The reflections are denoted $A,B,C$ and $\Gamma_N$ is mapped to $\tilde{\Gamma}_N$ via $R\mapsto BC$ and $T\mapsto AB$.
	In \autoref{thm:ping_pong_property_of_cones} we show that $\tilde{\Gamma}_N$ acts on a set of $2N$ cones in $\bP(\bR^4)$ in a (generalized) ping-pong manner.
	It follows that for its image in the projective group we have:
	\[
		P\tilde{\Gamma}_N\isom \ip{A,B,C \, | \, A^2=B^2=C^2=1, (BC)^N=1}.
	\]
	Then $\tilde{\Gamma}_N$ is either a $\bZ/2$-extension of $P\tilde{\Gamma}_N$ or isomorphic to it, according to whether $R^{N}=-\id$ ($N$ is even) or $R^N=\id$ ($N$ is odd).

	Discreteness follows from \autoref{thmintro:domain_of_discontinuity} below which shows that the groups have nonempty domains of discontinuity in the Lagrangian Grassmannian.
\end{proof}

\subsubsection*{log-Anosov property and further consequences}
Our method of proof, namely giving cones on which the group plays ping-pong, has many further consequences which are developed in \cite{sfilip_prep}.
Namely, the monodromy groups $\Gamma_N$ are examples of Anosov representations introduced by Labourie \cite{Labourie2006_Anosov-flows-surface-groups-and-curves-in-projective-space} (see also \cite{GuichardWienhard2012_Anosov-representations:-domains-of-discontinuity-and-applications} for further developments of the notion, as well as \cite{KapovichLeebPorti2018_Dynamics-on-flag-manifolds:-domains-of-proper-discontinuity-and-cocompactness,GueritaudGuichardKassel2017_Anosov-representations-and-proper-actions}), except that the definition needs to be adapted in order to allow for unipotents.
Such an extension has been provided by Kapovich and Leeb \cite{KapovichLeeb_Relativizing-characterizations-of-Anosov-subgroups-I}, Zhu
\cite{Zhu2019_Relatively-dominated-representations}, and in \cite{sfilip_prep}.

Let $G_N$ be the reflection group generated by the triangle in hyperbolic space with a point of angle $\pi/N$ in $\bH^2$ and two points at infinity.
The content of \autoref{thmintro:thin_discrete_monodromy} is that $G_N$ is mapped isomorphically onto a subgroup $P\tilde{\Gamma}_N\subset \PSp_4(\bR)$.
In fact, the method of proof also implies:
\begin{enumerate}
	\item There exists a $G_N$-equivariant continuous (\Holder) map
	\[
		\xi\colon \partial \bH^2 \to \bP(\bR^4)
	\]
	This is illustrated in Figure 1 below.
	\item There exists a nonempty open set $\Omega\subset \LGr(\bR^4)$ in the Lagrangian Grassmannian on which $P\tilde{\Gamma}_N$ acts properly discontinuously.
	\item The formula for the sum of Lyapunov exponents from \cite{EskinKontsevichMoller2018_Lower-bounds-for-Lyapunov-exponents-of-flat-bundles-on-curves} holds.
	\item The group $P\tilde{\Gamma}_N$ acts properly discontinuously on $\LGr^{1,1}(\bC^4)$, the Grassmannian of Lagrangians on which the indefinite hermitian pairing of signature $(2,2)$ restricts to signature $(1,1)$.
\end{enumerate}
The third point on Lyapunov exponents was exactly the property observed numerically and conjectured in \cite{Fougeron19} which motivated the study of this family of parameters.
Note that in weight $2$, for variations of Hodge structure of K3 type, the formula for the sum of Lyapunov exponents was established in \cite{Filip2018_Families-of-K3-surfaces-and-Lyapunov-exponents}.

The reader familiar with Hodge theory will recognize that $\LGr^{1,1}(\bC^4)$ is the target of a forgetful map from the Griffiths period domain of Hodge structures with Hodge numbers $(1,1,1,1)$, given by forgetting the first term of the Hodge filtration.
This is in contrast to Siegel space, which consists of Lagrangians for which the restricted hermitian pairing has signature $(2,0)$.
The action of a discrete group in $\Sp_4(\bR)$ has no apriori reason to act properly on $\LGr^{1,1}(\bC^4)$, even though it always acts properly on Siegel space.
It is also established in \cite{sfilip_prep} that the quotient of the domain of discontinuity $\Omega\subset \LGr(\bR^4)$ by $P\tilde{\Gamma}_N$ can be identified with a circle bundle over the orbifold $\bH^2/G_N$, constructed using Hodge theory.

\subsubsection*{Crooked surfaces}
Drumm \cite{Drumm1992_Fundamental-polyhedra-for-Margulis-space-times} introduced crooked surfaces to construct fundamental domains for discrete groups acting on Minkowski space.
These have found further applications in Lorenzian geometry, see e.g. \cite{DancigerGueritaudKassel2016_Geometry-and-topology-of-complete-Lorentz-spacetimes-of-constant-curvature} for some recent applications.
Let us note that in contrast to hyperbolic or euclidean spaces, where totally geodesic hyperplanes are natural and effective tools for constructing fundamental domains of group actions, in higher rank situations such obvious choices are not available.
Crooked surfaces have been effective in constructing fundamental domains in $3$-dimensional homogeneous spaces with Lorentz metrics (or conformal classes thereof).

In \autoref{sec:symplectic_geometry_and_causality} we connect the cones that are used to prove \autoref{thmintro:thin_discrete_monodromy} to crooked surfaces.
As it turns out, many properties of crooked surfaces can be conveniently expressed using cones, via the dictionary relating symplectic geometry in $\bR^4$ to the causal geometry of the projectivized null vectors in $\bR^{2,3}$, the latter being just the Lagrangian Grassmannian of $\bR^4$.
Most importantly for us, the criteria establishing disjointness of crooked surfaces developed in \cite{BurelleCharetteFrancoeur2021_Einstein-tori-and-crooked-surfaces} are concisely expressed by the containment of cones established during the ping-pong argument.
We need to further extend their criterion to allow the crooked surfaces to touch, see \autoref{ssec:disjointness_of_crooked_surfaces} for details.
In particular, we can explicitly analyze the action of our groups on $\LGr(\bR^4)$ and obtain:

\begin{theoremintro}[Domain of discontinuity]
	\label{thmintro:domain_of_discontinuity}
	For each $N\geq 4$ there exists a nonempty open set $\Omega_N\subset \LGr(\bR^4)$ on which $\Gamma_N$ acts properly discontinuously.
\end{theoremintro}
See \autoref{thm:proper_discontinuity} and the discussion preceding it.

\begin{figure}[htbp!]
	\centering
	\includegraphics[width=0.326\linewidth]{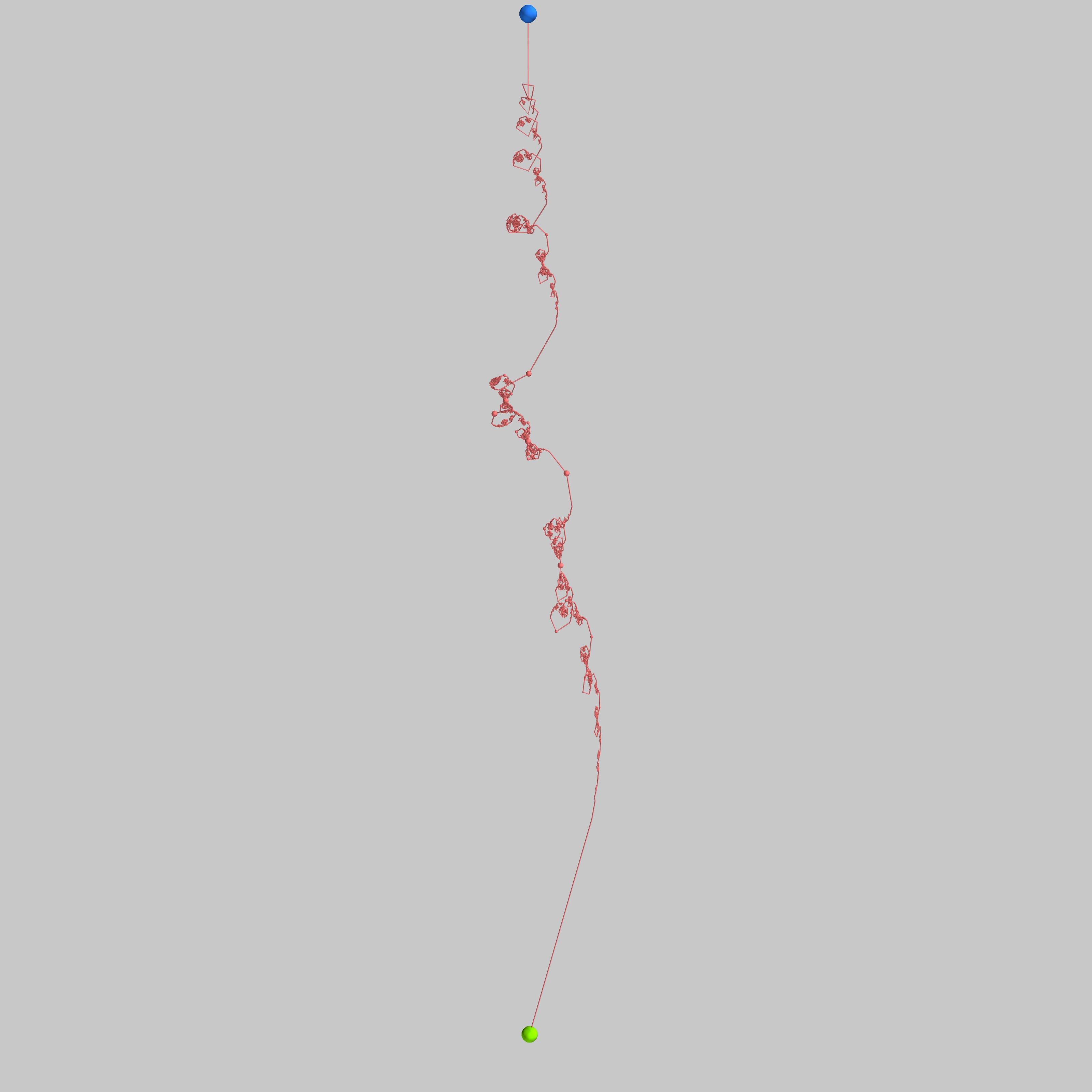}
	\includegraphics[width=0.326\linewidth]{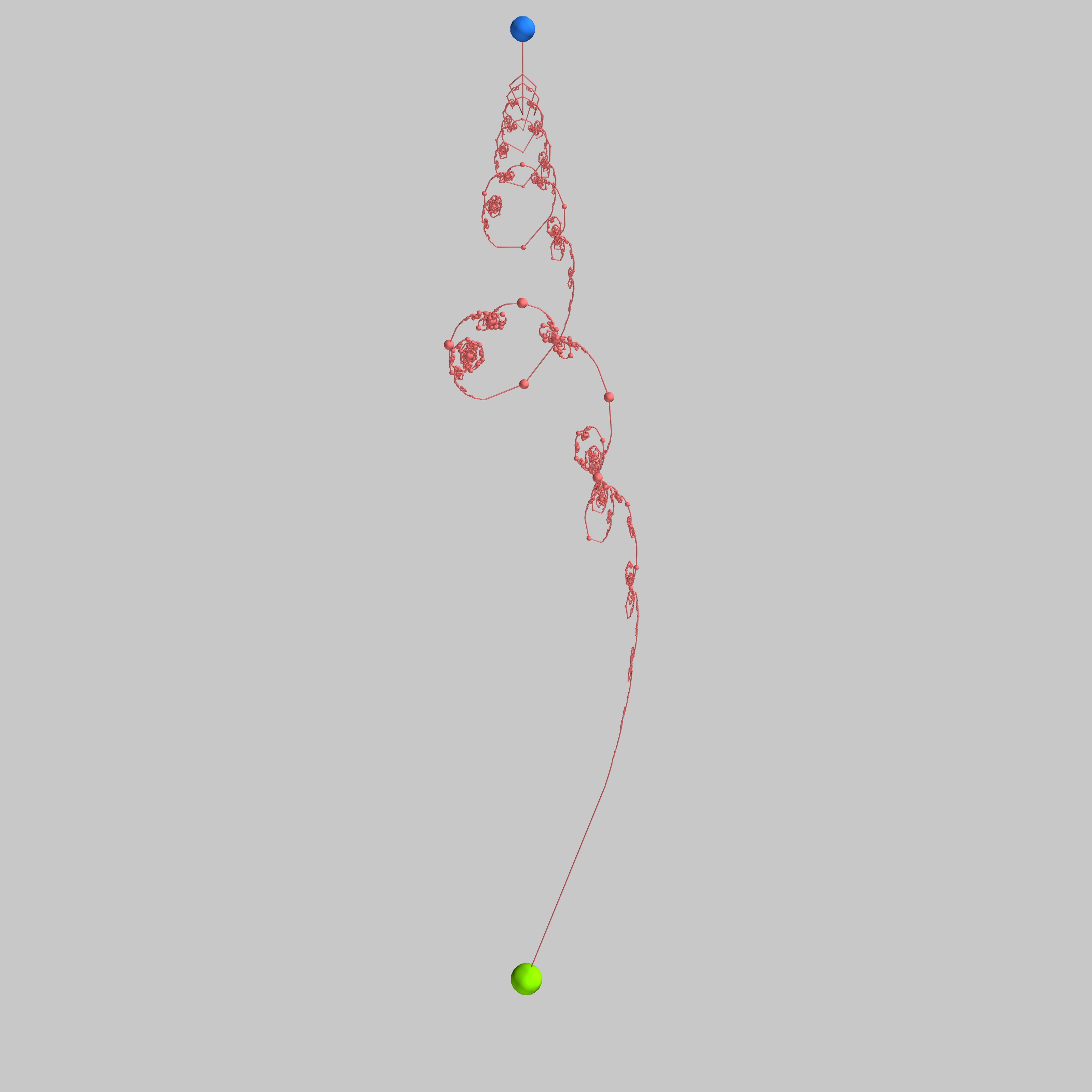}
	\includegraphics[width=0.326\linewidth]{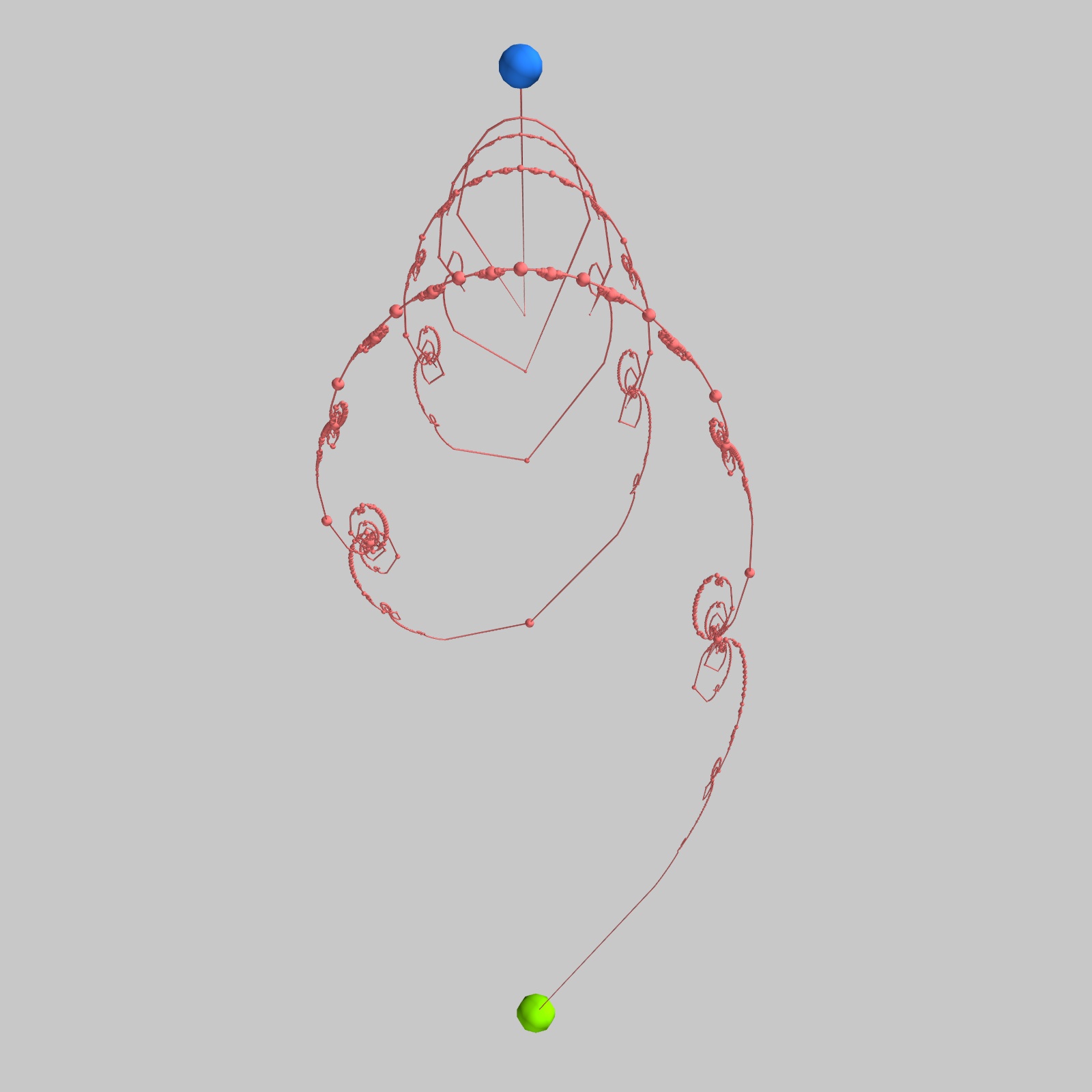}\\
	\includegraphics[width=0.326\linewidth]{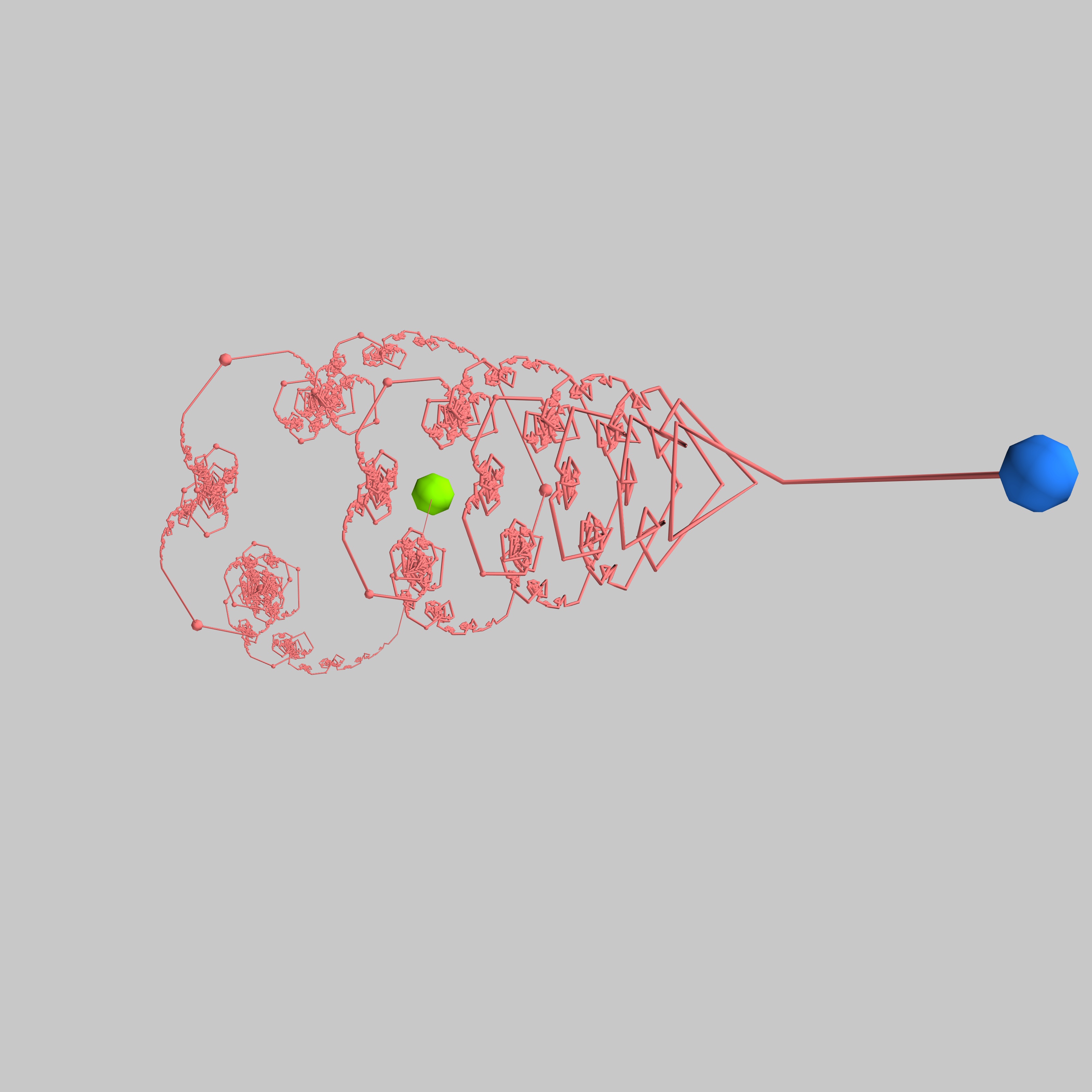}
	\includegraphics[width=0.326\linewidth]{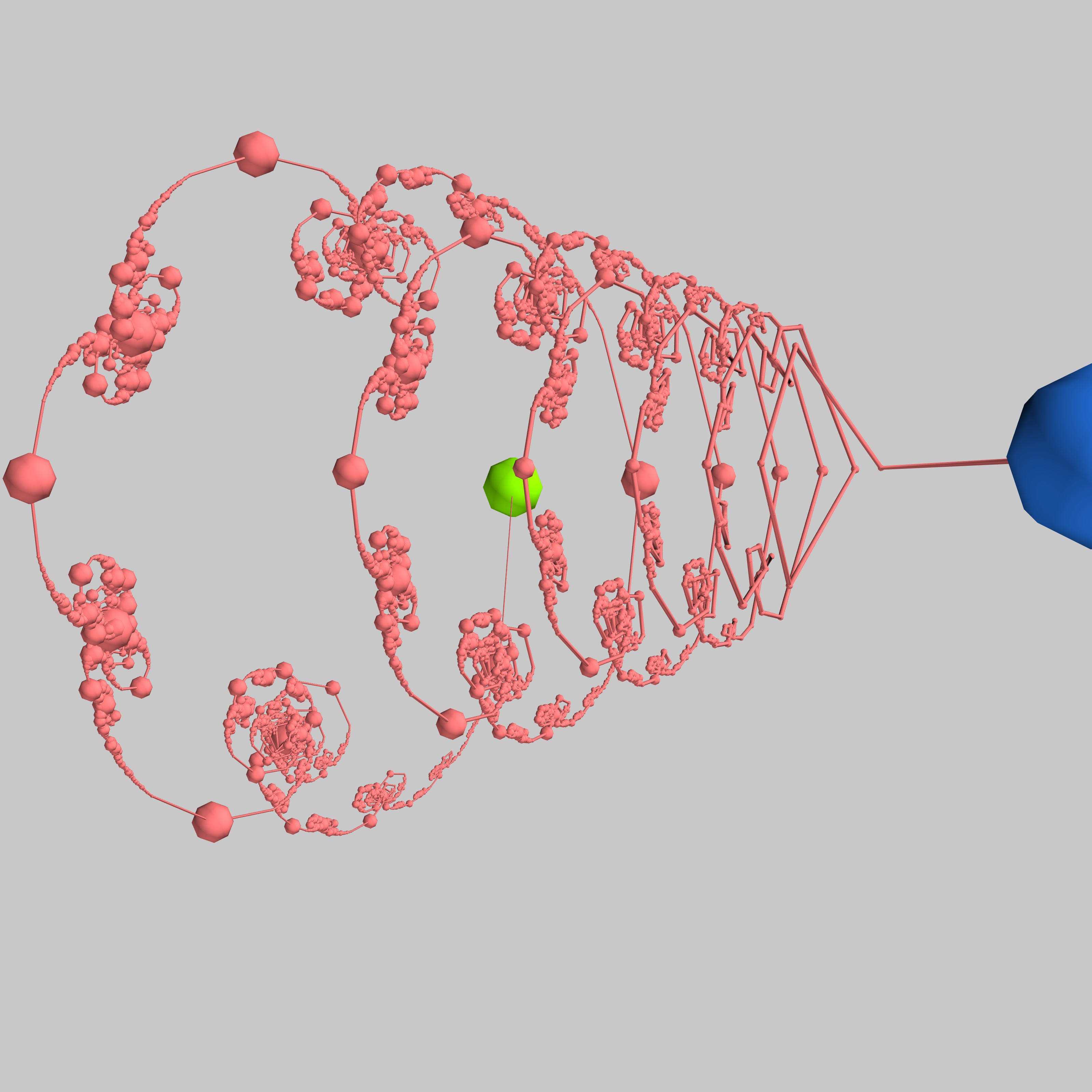}
	\includegraphics[width=0.326\linewidth]{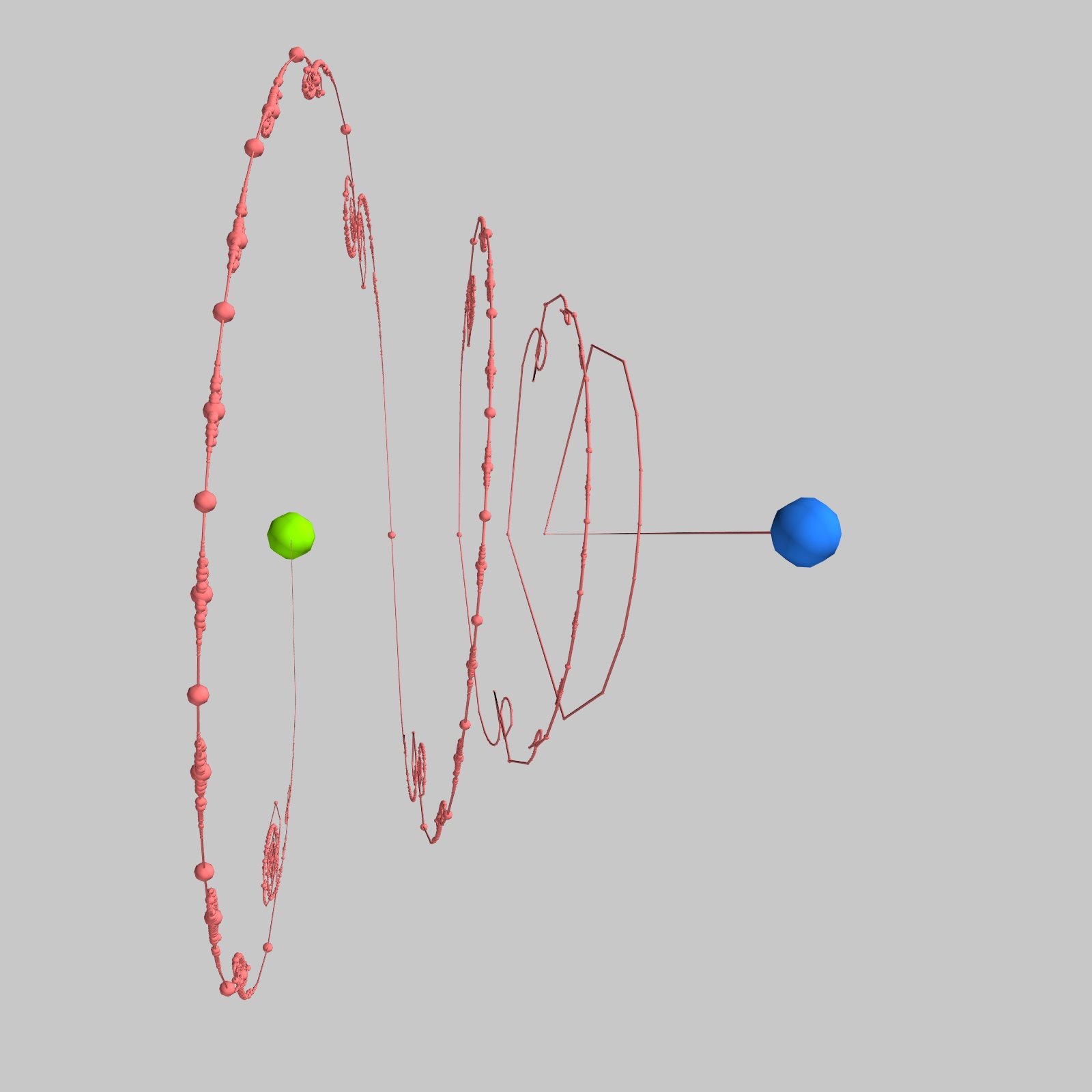}\\
	\includegraphics[width=0.326\linewidth]{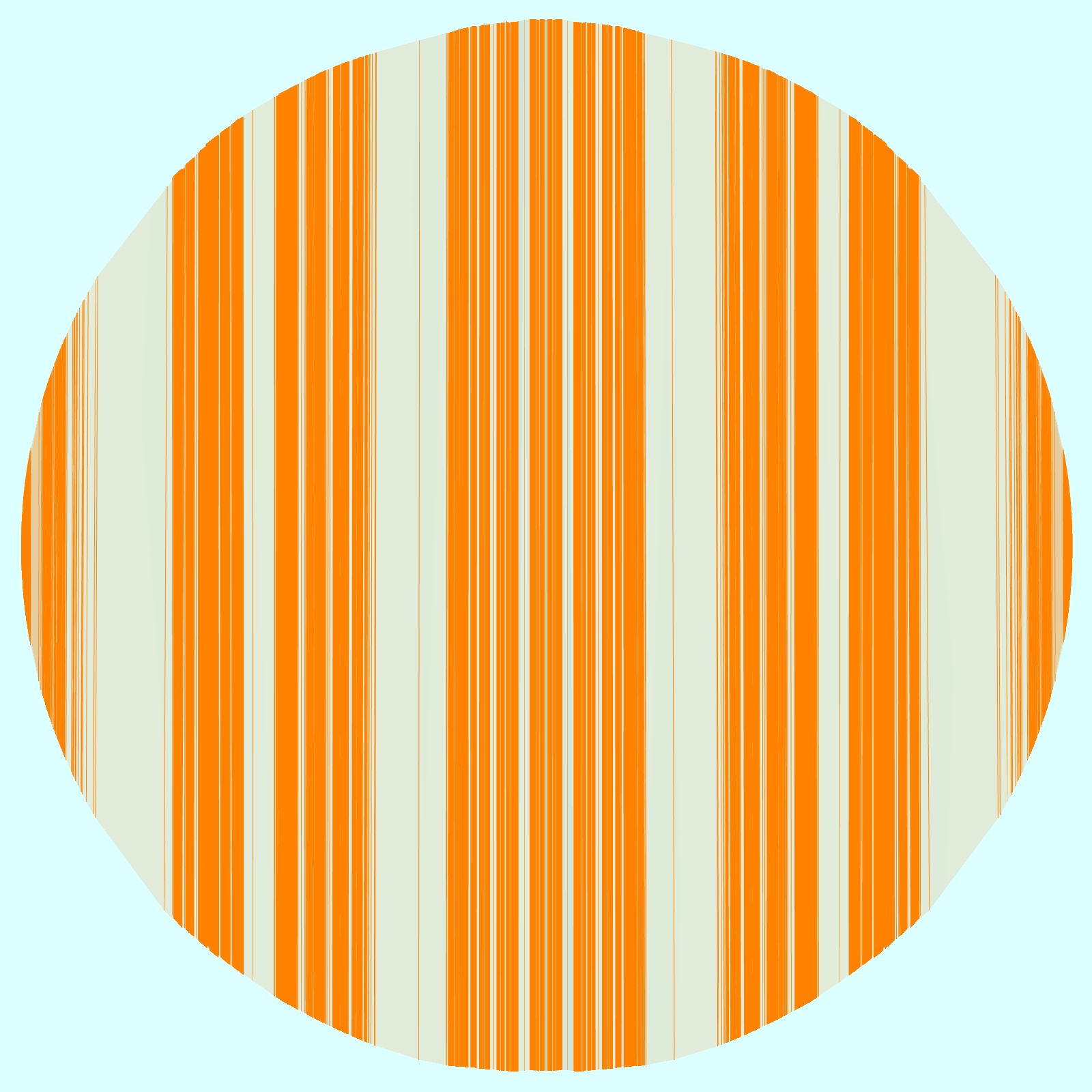}
	\includegraphics[width=0.326\linewidth]{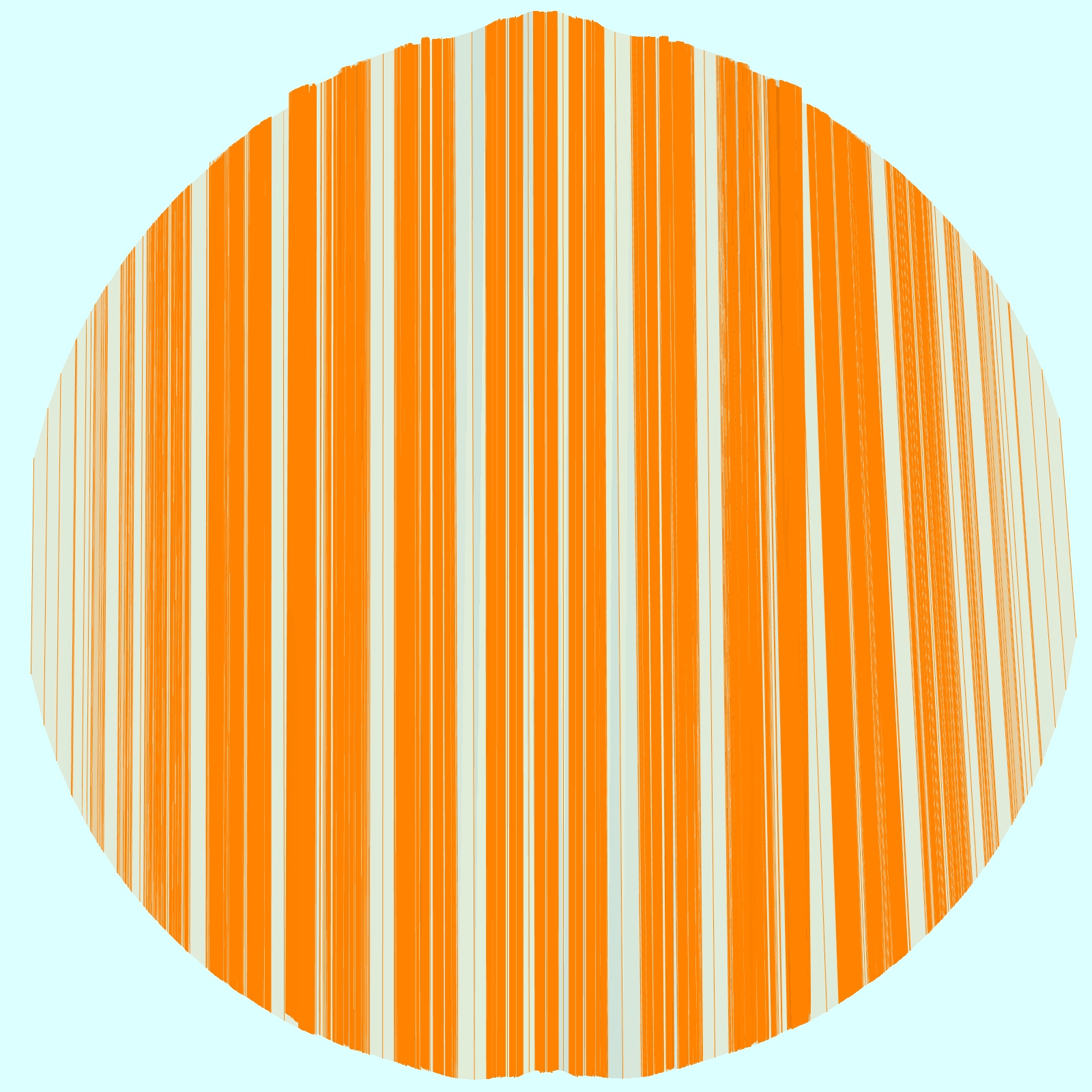}
	\includegraphics[width=0.326\linewidth]{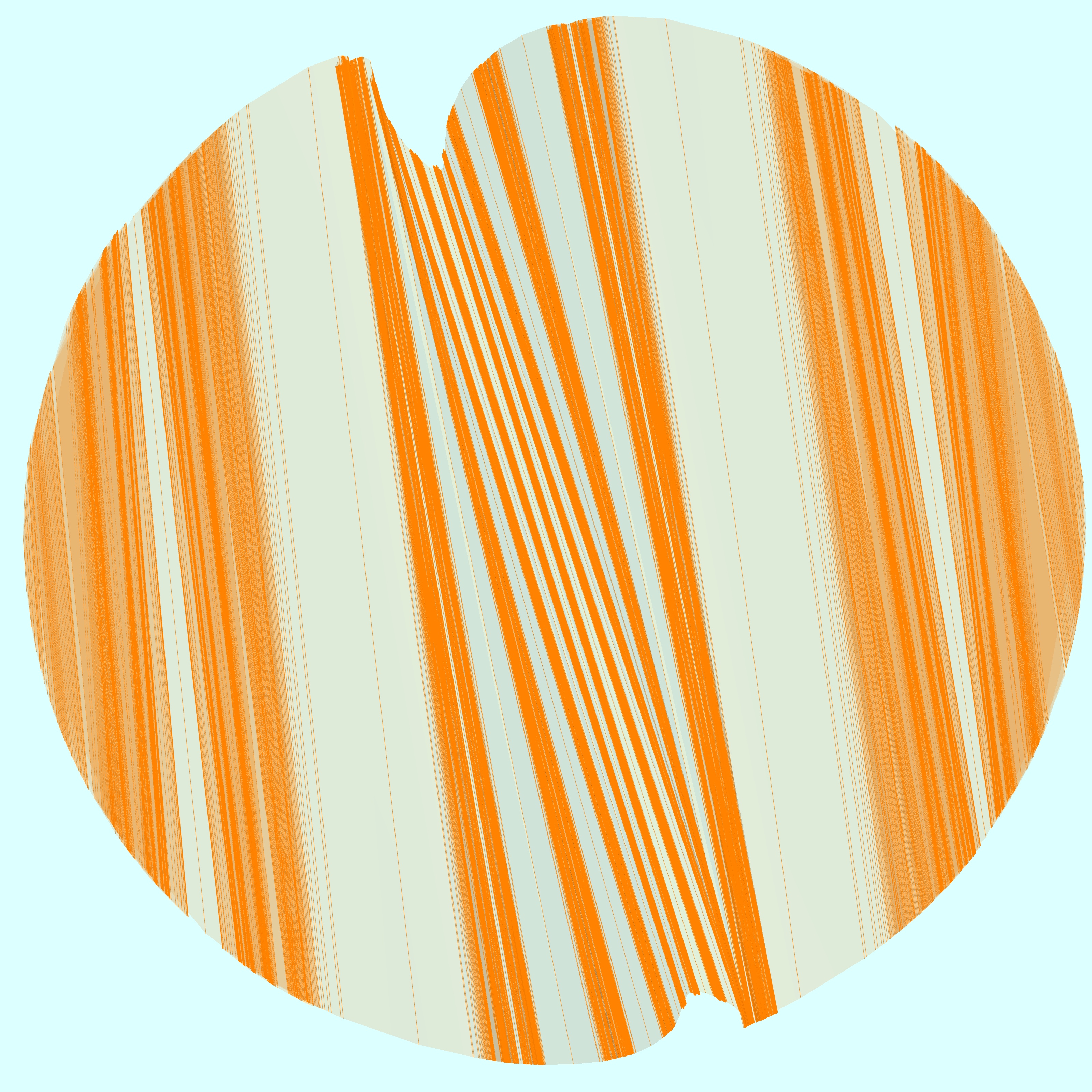}\\
	\includegraphics[width=0.326\linewidth]{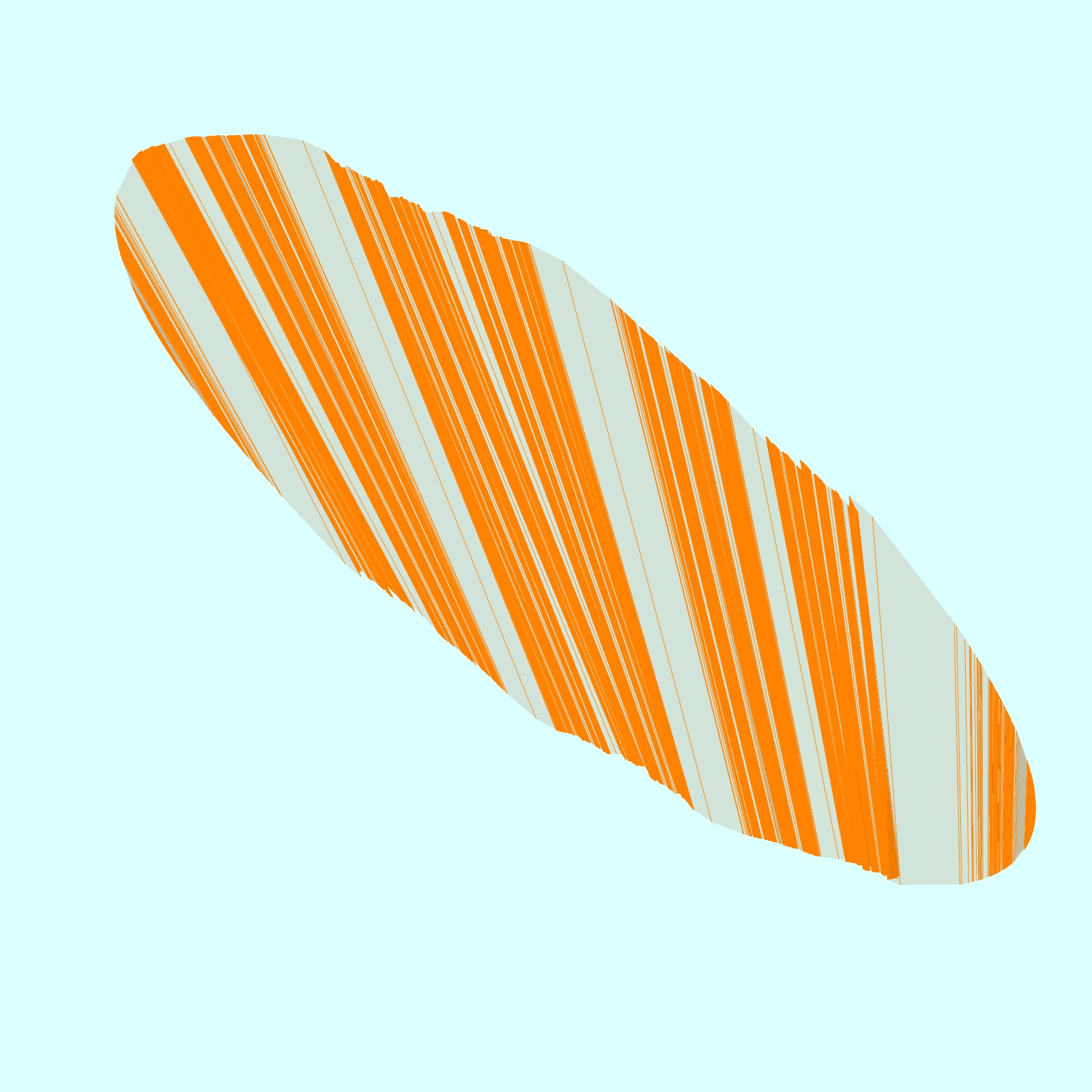}
	\includegraphics[width=0.326\linewidth]{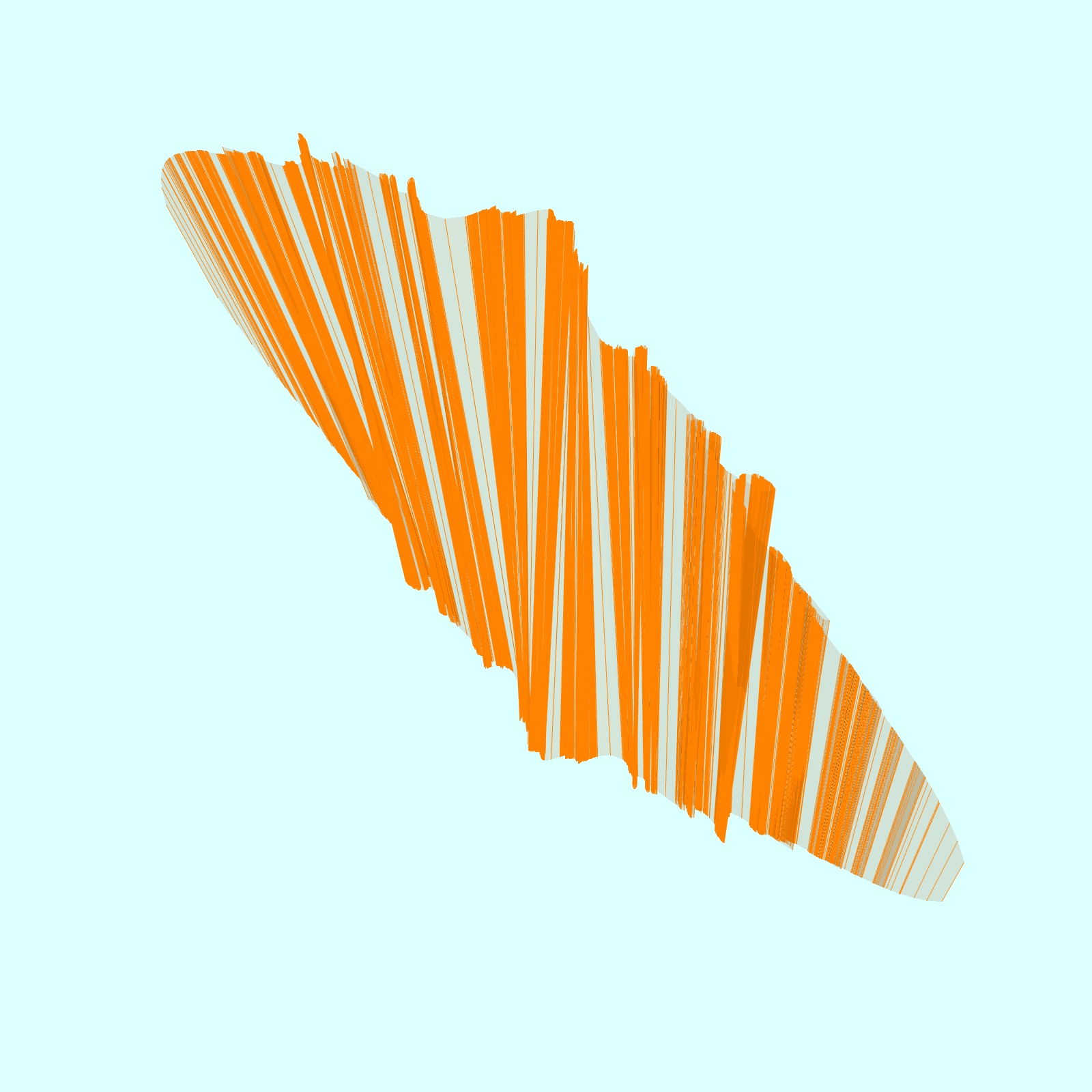}
	\includegraphics[width=0.326\linewidth]{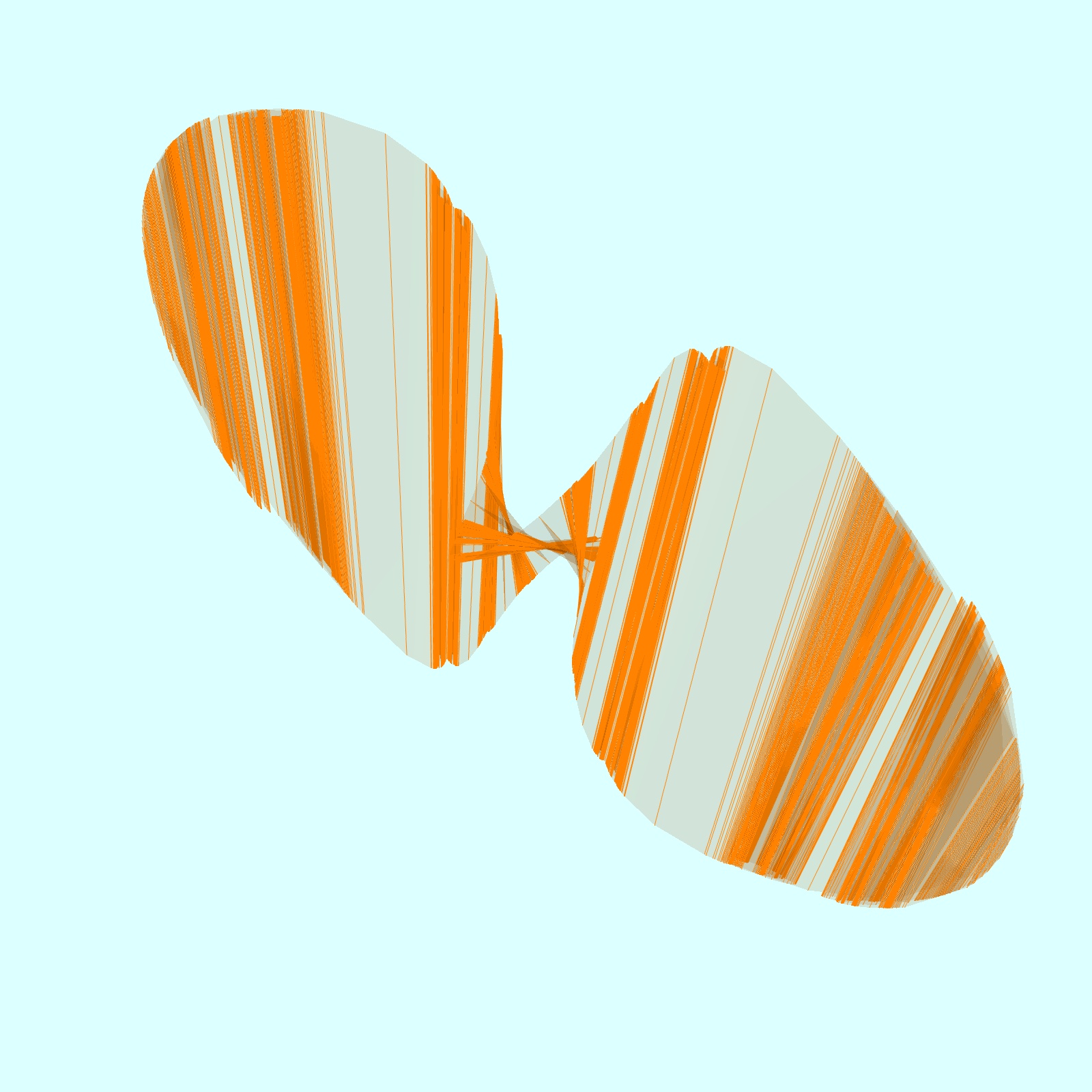}
	\caption*{\textsc{Figure 1.}
	}
	\label{fig:limit_curve}
\end{figure}

\subsubsection*{On Figure 1}
We include some numerical simulations demonstrating the limit curves and surfaces.
The limit surface is the complement of the domain of discontinuity $\Omega_N$ from \autoref{thmintro:domain_of_discontinuity}.
The parameter $N$ is fixed in each column and is equal to $N=4,5$ and $11$ respectively.
Each row gives a view of the limit curve in $\bP(\bR^4)$, and limit surface in $\LGr(\bR^4)$, from roughly the same position.
Only the part of the limit curve between the points stabilized by the MUM, resp. rank $1$ unipotent, is displayed.
The limit surface is intersected with a Euclidean sphere and displayed in a chart of $\LGr(\bR^4)$ which is conformally equivalent to Minkowski space $\bR^{2,1}$ (the chart is given as the complement of the nullcone of one Lagrangian).

\subsubsection*{On thinness}
	According to the customary definition, see e.g. the survey of Sarnak \cite{Sarnak_Notes-on-thin-matrix-groups}, a thin group is one which is of infinite index in an arithmetic lattice.
	Let us explain how this arises in our context.

	Let $\bQ(\zeta_{2N})$ be an extension of $\bQ$ obtained by adjoining a $2N$-th root of unity, $\bQ(\zeta_{2N})^+$ its totally real subfield, and $\cO_{2N}^+$ the corresponding ring of integers.
	Plainly from the definitions of hypergeometric groups (see e.g. \cite{BeukersHeckman1989_Monodromy-for-the-hypergeometric-function-nFn-1}) it follows that the monodromy matrices can be expresses with entries in $\cO_{2N}^+$.
	However, the multiplicatively invertible residue classes $(\bZ/2N)^{\times}$ have a Galois action on the roots of unity, and we can identify those with $\left(\tfrac{1}{2N}\bZ\right)/\bZ$, or rationals in $[0,1)$ with denominator $2N$.
	Then a subgroup will stabilize our given four-tuple $\alpha^{(0)}_{\bullet}=\left(\tfrac{N-3}{2N},\tfrac{N-1}{2N}, \tfrac{N+1}{2N}, \tfrac{N+3}{2N}\right)$.
	We also have the orbit of our four-tuple under this multiplicative action, with representatives (all mod $1$) $\alpha_{\bullet}^{(i)}$, say a total of $k$ distinct representatives.

	This defines a subfield $K_N\subset \bQ(\zeta_{2N})^+$ such that the Galois group of $\bQ(\zeta_{2N})/K_N$ is precisely the stabilizer in $(\bZ/2N)^\times$ of our original four-tuple (note that $-1$ stabilizes our four-tuple so the subfield is totally real).
	If we denote by $\cO_{K_N}$ the ring of integers in $K_N$ then our monodromy group $\Gamma_N$ embeds in $\Sp_4(\cO_{K_N})$.
	This last group is an arithmetic lattice in $\Sp_4(\bR)^k$ (a product of $k$ copies of $\Sp_4(\bR)$) where $k$ is the number of four-tuples obtained by the multiplicative action on our original one.
	The projection of $\Gamma_N\into \Sp_4(\cO_{K_N})\into \Sp_4(\bR)^k$ to any of the $\Sp_4(\bR)$-factors yields the Galois-conjugate local systems of our original one.

	The monodromy group $\Gamma_N$ is visibly discrete in the product $\Sp_4(\bR)^k$ since it is contained in the discrete lattice there.
	But \autoref{thmintro:thin_discrete_monodromy} implies that $\Gamma_N$ is in fact discrete when projected to one of the factors.
	This is similar to the classical constructions of Deligne--Mostow \cite{DeligneMostow1986_Monodromy-of-hypergeometric-functions-and-nonlattice-integral-monodromy} of non-arithmetic lattices in $\SU(1,n)$, with the difference that our group $\Sp_4(\bR)$ is of higher rank.

	Note that the Galois-conjugate monodromy representations yield groups which are abstractly isomorphic to the original one.
	However, the discreteness part of \autoref{thmintro:thin_discrete_monodromy} has no reason to extend to the Galois-conjugate local systems (and we suspect it does not hold in general).

	Let us remark that Veech groups, which arise in \Teichmuller dynamics, also yield lattices in $\SL_2(\bR)$ but also yield thin groups in Hilbert modular groups, which are arithmetic lattices inside products of $\SL_2(\bR)$.
	The relation between these lattices and Hodge theory was investigated by M\"{o}ller \cite{Moller2006_Variations-of-Hodge-structures-of-a-Teichmuller-curve}, and in higher rank in \Teichmuller dynamics in \cite{Filip_Semisimplicity-and-rigidity-of-the-Kontsevich-Zorich-cocycle}.
	See also McMullen's recent investigation
	\cite{McMullen2020_Billiards-heights-and-the-arithmetic-of-nonarithmetic-groups} in this direction, and Zorich's survey \cite{Zorich2006_Flat-surfaces} for further background in \Teichmuller dynamics.

	Let us finally remark that the $\bR$-Zariski density of $\Gamma_N$ inside $\Sp_4(\bR)^k$ follows from the combination of the results of Beukers--Heckman \cite{BeukersHeckman1989_Monodromy-for-the-hypergeometric-function-nFn-1}, which establish Zariski density in each factor separately, and Goursat's lemma in group theory (combined with the fact that $\Sp_4$ is simple).

\subsubsection*{Numerical experiments}
	Our work started from observations on the numerical behavior of the action of monodromy groups on the cones that can be accessed at \url{https://gitlab.com/fougeroc/ping-pong}.
	We have also used symbolic computation tools from SageMath \cite{sagemath} and our final worksheet can be found at \url{https://gitlab.com/fougeroc/notebook-cyclotomic-family}.
	This can be handy, but not logically necessary, for the reader who wants to follow our computations.

\subsubsection*{Related work, and generalizations}
	\label{sssec:related_work}
After a first version of this text was released, we learned from Fanny Kassel that together with Jean-Philippe Burelle, they have a forthcoming paper \cite{Burelle_Kassel} which contains and generalizes some of the contents of our \autoref{sec:symplectic_geometry_and_causality} on crooked surfaces.
Specifically, the interpretation of crooked surfaces in the Einstein universe $\Ein^{1,2}\simeq \LGr(\bR^4)$ in terms of projective simplices in $\bP(\bR^4)$, as well as the simpler interpretation of the disjointness criterion from \cite{BurelleCharetteFrancoeur2021_Einstein-tori-and-crooked-surfaces}, is also contained in their work and was known to them in 2018.
Their work also contains an extension to higher dimensions.
We arrived independently and unaware of their work at the results of \autoref{sec:symplectic_geometry_and_causality}.

\subsubsection*{Acknowledgments}
	This work was supported by the Agence Nationale de la Recherche through the project Codys (ANR 18-CE40-0007).
	This material is based upon work supported by the National Science Foundation under Grant No. DMS-2005470 (SF) and DMS-1638352 (at the IAS).

	This research was partially conducted during the period the first-named author served as a Clay Research Fellow.
	SF also gratefully acknowledges support from the Institute for Advanced Study.
	Part of this work was conducted while the authors were in residence at the Mathematical Sciences Research Institute in Berkeley, California, during the Fall 2019 semester, with support from U.S. National Science Foundation grants DMS-1107452, 1107263, 1107367 ``RNMS: Geometric Structures and Representation Varieties'' (the GEAR Network), as well as Grant No. DMS-1440140 (MSRI).



\section{Background on hypergeometric groups}
	\label{sec:background_on_hypergeometric_groups}

\paragraph{Outline of section}
In \autoref{ssec:notation} we recall some basic definitions regarding hypergeometric differential equations and their monodromy.
Next, in \autoref{ssec:reflection_structure} we extend with index $2$ the monodromy group to make it generated by reflections.


\subsection{Notation}
	\label{ssec:notation}

We recall here some standard facts on hypergeometric groups.
See \cite{BeukersHeckman1989_Monodromy-for-the-hypergeometric-function-nFn-1} or \cite{Yoshida1997_Hypergeometric-functions-my-love} for further background.

\subsubsection{Setup}
	\label{sssec:setup_notation}
Fix two $n$-tuples of reals $\{\alpha_i\}_{i=1}^{n},\{\beta_i\}_{i=1}^{n}$ subject to the normalizations $\alpha_i\in[0,1)$ and $\beta_i\in (0,1]$.
Note that most classical normalizations, which involve explicit hypergeometric functions, take $\beta_n=1$, and in our case we will take $\beta_i=1,\forall i$ to ensure maximal unipotent monodromy at $0$ (another popular normalization and different expressions for the differential operators are related to ours by $\beta_i\mapsto 1-\beta_i$).
Let also $a_i:=\exp(\twopii \alpha_i)$ and $b_i = \exp(\twopii \beta_i)$ be (unit) complex numbers.

\subsubsection{Differential operator and monodromy}
	\label{sssec:differential_operator_and_monodromy}
Consider the differential operator
\[
	D_{\alpha,\beta}:=\prod_{i=1}^n(D+\beta_i-1) - z\prod_{i=1}^n (D+\alpha_i) \quad D:=z\partial_z
\]
In $\bP^1(\bC)\setminus \{0,1,\infty\}$ its solutions form a local system $\bV(\alpha,\beta)$ of rank $n$, which we will call the hypergeometric local system.
Let $g_0,g_1,g_{\infty}$ be the monodromy matrices of this local system, along paths as described in \autoref{fig:paths_monodromy}.
Then their conjugacy classes are determined by the following conditions on the characteristic polynomials and ranks:
\begin{align*}
	\det(t-g_{\infty})&=\prod_{i}(t-a_i)\\
	\det(t-g_{0}^{-1})&=\prod_{i}(t-b_i)\\
	\rk(g_1 - \id) & = 1 \quad \det(g_1)=\exp\left(\twopii \sum \left(\beta_i-\alpha_i\right)\right)\\
\end{align*}
with the convention that whenever there are repeated roots, there is only one Jordan block.

\begin{figure}[htbp!]
	\centering
	\includegraphics[width=0.7\linewidth]{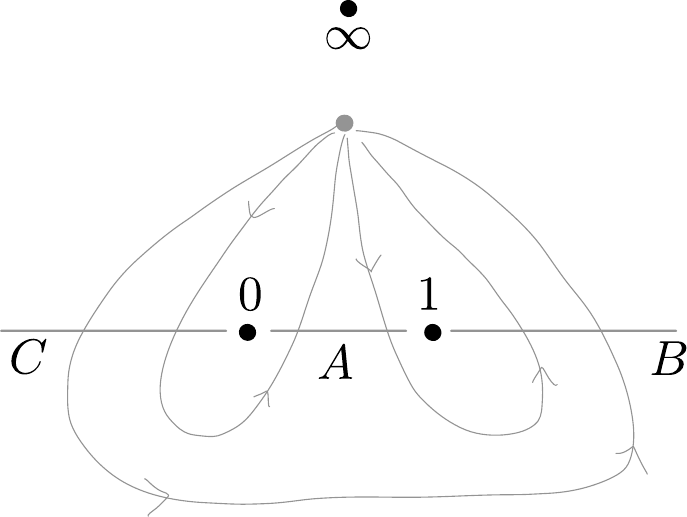}
	\caption{The paths along which we parallel-transport solutions}
	\label{fig:paths_monodromy}
\end{figure}

\subsubsection{Rigidity of the local system}
	\label{sssec:rigidity_of_the_local_system}
Assuming that $\alpha_i- \beta_j\notin \bZ$ for any $i,j$, the local system $\bV(\alpha,\beta)$ is irreducible.
Furthermore, any local system on $\bP^1(\bC)\setminus \{0,1,\infty\}$ which has the same conjugacy classes of monodromy matrices around the missing points is isomorphic to the hypergeometric local system.
In particular, to verify that an explicit representation of the free group on two letters yields a hypergeometric local system, it suffices to consider the corresponding conjugacy classes of the monodromy matrices around the removed points.

\subsubsection{Thin cyclotomic parameters}
	\label{sssec:thin_cyclotomic_parameters}
We will consider the family of hypergeometric groups with parameters
\begin{align*}
	\beta_{\bullet}: & \quad (1,1,1,1)\\
	\alpha_{\bullet}: & \quad
	\left(\frac{N-3}{2N},\frac{N-1}{2N},
		\frac{N+1}{2N},\frac{N+3}{2N}
		\right) \quad N\geq 3
\end{align*}
Note that we have the linear equation $-\alpha_1+3\alpha_2=1$.
When working with rotation matrices, we will make use of the parameters:
\begin{align}
	\label{eqn:mu_i_definition}
	\begin{split}
	\mu_1 & = 2\pi \alpha_1 = \frac{N-3}{N}\pi\\
	\mu_2 & = 2\pi \alpha_2 = \frac{N-1}{N}\pi
	\end{split}
\end{align}



\subsection{Reflection structure}
	\label{ssec:reflection_structure}

As stated in \autoref{sssec:rigidity_of_the_local_system}, in order to verify that a certain representation is the monodromy of a hypergeometric group, it suffices to consider the conjugacy classes of the corresponding matrices.
In this section, we will enlarge (with index $2$) our hypergeometric groups to groups generated by reflections.
This structure arises because our parameters are real, hence the hypergeometric differential equation has a complex conjugation symmetry and its solutions can be Schwarz-reflected across the real axis.

\subsubsection{Abbreviations}
	\label{sssec:abbreviations_c_i_def}
To keep formula sizes manageable, we will use the following abbreviations:
\begin{align}
	\label{eqn:ci_si_notation}
	\begin{split}
		c_1 & := \cos(\mu_1)	\\
		c_2 & := \cos(\mu_2)
	\end{split}
	\begin{split}
		s_1 & := \sin(\mu_1 )	\\
		s_2 & := \sin(\mu_2 )
	\end{split}
\end{align}
where the parameters $\mu_1,\mu_2$ are introduced in \autoref{eqn:mu_i_definition}.
Their specific numerical values will not be relevant until we reach the calculations with rotated vectors in \autoref{sec:computations_with_rotated_vectors}.

It will also be convenient to introduce the shorthands:
\begin{align}
	\label{eqn:r_i_definition}
	\begin{split}
	r_1 & := \frac {2(c_1-1)^2} {s_1(c_1-c_2)}
	\end{split}
	\begin{split}
	r_2 & := \frac {2(c_2-1)^2} {s_2(c_2-c_1)}
	\end{split}
\end{align}

\subsubsection{The reflection matrices}
	\label{sssec:the_reflection_matrices}
With these preparations, define:
\[
A =
\begin{bmatrix}
	-1   & 0 & 0    & 0 \\
	-r_1 & 1 & -r_1 & 0 \\
	0    & 0 & -1   & 0 \\
	-r_2 & 0 & -r_2 & 1
\end{bmatrix}
B =
\begin{bmatrix}
-1 & 0 & 0 & 0 \\
0 & 1 & 0 & 0 \\
0 & 0 & -1 & 0 \\
0 & 0 & 0 & 1
\end{bmatrix}
C =
\begin{bmatrix}
-c_{1} & s_{1} & 0 & 0 \\
 s_{1} & c_{1} & 0 & 0 \\
0 & 0 & -c_{2} & s_{2} \\
0 & 0 &  s_{2} & c_{2}
\end{bmatrix}
\]

Define also a symplectic pairing on $\bR^4$ by the following matrix:
\begin{align}
	\label{eqn:symplectic_pairing_J_def}
	J =
	\begin{bmatrix}
		0    & r_2 & 0    & 0 \\
		-r_2 & 0   & 0    & 0 \\
		0    & 0   & 0    & r_1 \\
		0    & 0   & -r_1 & 0
	\end{bmatrix}
\end{align}

\subsubsection{Properties of the reflection matrices}
	\label{sssec:properties_of_the_reflection_matrices}
For $M\in \{A,B,C\}$, we have that
\[
	 M^t J M = -J
\]
or in other words, the above matrices satisfy $\ip{Mv,Mw}=-\ip{v,w}$ for vectors $v,w\in \bR^4$ and $\ip{v,w}:=v^t J w$ the symplectic pairing.
It also follows from the formulas that
\[
	A^2 = B^2 = C^2 = 1.
\]
Let us verify that if we define the monodromy matrices of a local system using the above reflections, as described in \autoref{fig:paths_monodromy}, we obtain a hypergeometric group with parameters specified in \autoref{sssec:thin_cyclotomic_parameters}.

It is immediate that the matrix $BC$ is block-diagonal consisting of rotation matrices by angles $\mu_1,\mu_2$, so the conjugacy class at infinity is correct.
It is immediate also that the matrix $BA$ of monodromy around $1$ is such that $BA-\id = B(A-B)$ is of rank $1$.
The only necessary calculation is that $AC$ is a maximally unipotent matrix.

One could check it by a tedious and explicit calculation from the above formulas.
A shortcut in computations it to use the vectors generating the cone $\cC_0$ defined by \autoref{eqn:fundamental_cone_definition}, see also \autoref{eqn:cone_def_abstract} for which vectors are $v_i$.
Then two readily verified properties yield the result.
First, one checks that each column vector is an eigenvector of $A$, with eigenvalue $(-1)^{i+1}$ for $v_i$.
Next, one verifies that $C$ satisfies $Cv_i = (-1)^{i+1} v_i + \sum_{j>i} c_{i}^j v_j$, i.e. $C$ respects the filtration induced by the vectors $v_i$.
It then follows that $CA$ is a maximally unipotent matrix preserving the filtration induced by the cone vectors.




\section{Cones and ping-pong}
	\label{sec:cones_and_ping_pong}

\paragraph{Outline of section}
We describe in \autoref{ssec:triangle_reflection_groups} the hyperbolic triangle reflection groups that give the fundamental group of the orbifold which is the basis for our analysis.
Next, in \autoref{ssec:the_cones} we describe the abstract properties of the cones that are used for the ping-pong argument.
Based on these abstract properties we explain in \autoref{ssec:proof_of_the_ping_pong_property} how to reduce the proof of the ping-pong property to certain explicit calculations.
Finally in \autoref{ssec:explicit_cones_and_properties} we give the explicit formula for the cone and verify that it has the properties that we used.
This reduces the calculations to an explicit analysis in \autoref{sec:computations_with_rotated_vectors}.


\subsection{Triangle reflection groups}
	\label{ssec:triangle_reflection_groups}

\subsubsection{Setup}
	\label{sssec:setup_triangle_reflection_groups}
Fix an integer $N\geq 4$.
We will be interested in the group given by the generators and relations:
\[
	G_{N}:=\ip{a,b,c\, |\, a^2=b^2=c^2=1, (bc)^{N}=1}
\]
It is transparent that it acts on the hyperbolic plane such that $a,b,c$ are reflections in geodesics, with the geodesics for $b$ and $c$ forming an angle of $\tfrac{\pi}{2N}$ and the geodesic for $a$ going between the (nearest) endpoints of the geodesics for $b$ and $c$.
An illustration in the disc model is provided in \autoref{fig:orbifold_fundamental_domain}.
\begin{figure}[htbp!]
	\centering
	\includegraphics[width=0.7\linewidth]{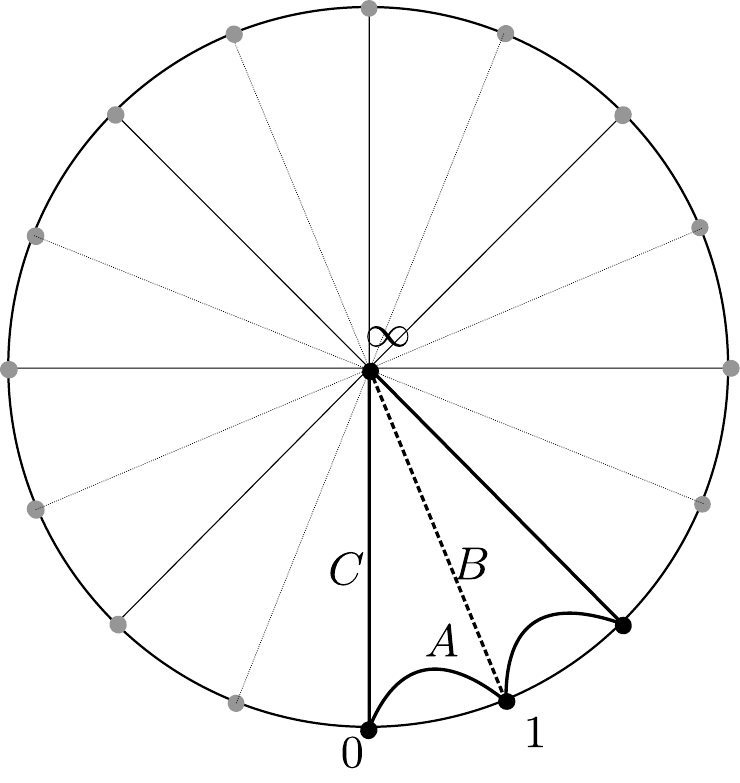}
	\caption{Fundamental domain for the triangle group action.}
	\label{fig:orbifold_fundamental_domain}
\end{figure}

\subsubsection{Linear and projective representation}
	\label{sssec:linear_and_projective_representation}
Recall that our basic angles from \autoref{eqn:mu_i_definition} are:
\[
	(\mu_1,\mu_2) = \left(2\pi\frac{N-3}{2N},2\pi\frac{N-1}{2N}\right).
\]
and that we defined the matrices $A,B,C$ in \autoref{sssec:the_reflection_matrices}.
Define the group generated by them:
\[
	\tilde\Gamma_N := \ip{A,B,C}\subset \GSp_{4}(\bR)
\]
They are in the general symplectic group, i.e. $\ip{gv,gw}=\chi(g)\ip{v,w}$ for a character $\chi \colon \GSp_4\to \bG_m$ and where $\ip{-,-}$ is the symplectic pairing.

Let $R:=BC$ be the block rotation matrix by the corresponding angles $\mu_i$.
We then have the following basic dichotomy:
\begin{description}
	\item[$N$ is odd]  Then the order of $R$ is $N$ and $-\id$ is not in $\Gamma_N$.
	\item[$N$ is even]  Then the order of $R$ is $2N$ and $-\id$ is in $\Gamma_N$.
	Specifically $R^N=-\id$.
\end{description}
Indeed, we have that $gcd(N-1,2N)=gcd(N-1,2)$ which is $2$ or $1$, according to whether $N$ is odd or even.

In both cases we will consider only the projective action of $\Gamma_N$, so let:
\[
	P\tilde\Gamma_N := \text{image of }\tilde\Gamma_N \subset \PGSp_4(\bR).
\]
It follows that independently of the parity of $N$ we have the representation
\begin{align}
	\label{eqn:rho_N_def}
	\begin{split}
	G_N & \xrightarrow{\phantom{1}\rho_N\phantom{1}} P\tilde\Gamma_N \acts \bP(\bR^4)\\
	\{a,b,c\} & \mapsto \{A,B,C\}
	\end{split}
\end{align}
which will be our basic object of study.



\subsection{The cones}
	\label{ssec:the_cones}

The action of the reflections $b,c$ on the boundary of hyperbolic space divides it into $2N$ circular arcs.
We will associate a projective cone in $\bP(\bR^4)$ to one of the arcs, and propagate it to the remaining $2N-1$ arcs using the action of the matrices $B,C$.

\subsubsection{Cones and projective cones}
	\label{sssec:cones_and_projective_cones}
To specify a cone in $\bR^4$ one can either give the vectors spanning it, or specify the equations of its faces.
This, in particular, gives a duality between cones in a vector space and cones in its dual.
Our cones will turn out to be self-dual when the vector space is identified with its dual via the symplectic pairing.
Additional, our cones will be simplicial, i.e. have four faces and four extreme rays.

We will describe a (simplicial) cone $\cC$ by specifying its four spanning vectors, and write $\cC=[v_0| v_1| v_2| v_3]$ where $v_i$ are column vectors in $\bR^4$.
So elements of $\cC$ are of the form $\sum a_i v_i$ where $a_i\geq 0$.
For the ping-pong argument we will consider the image of the cones in $\bP(\bR^4)$, but for calculations we will distinguish between a cone $\cC$ and its negative $-\cC$, spanned by $-v_i$.
Given a cone $\cC$ in $\bR^4$ we denote by $\bP\cC$ its image in $\bP(\bR^4)$.

\subsubsection{The ping-pong cones}
	\label{sssec:the_ping_pong_cones}
Let us postpone the explicit definition of the cone vectors until \autoref{ssec:explicit_cones_and_properties} but use the following notation to describe some important properties.
We start with a cone of the form
\begin{align}
	\label{eqn:cone_def_abstract}
	\cC_0 := [v_0 \,|\, v_1 \,|\, v_2\,|\, v_3]
\end{align}
where $v_i\in \bR^4$ are vectors with the following properties:
\begin{enumerate}
	\item The vectors $v_0, v_2$ are fixed by $B$.
	\item The vector $v_3$ satisfies $Cv_3=-v_3$.
\end{enumerate}
In particular $Bv_3 = B(-Cv_3) = (-R)v_3$.

We can then define the adjacent cone by reflection in $B$:
\begin{align}
	\label{eqn:cone_prime_def_abstract}
	\cC'_0 := B \cC_0 = [v_0 \,|\, Bv_1 \,|\, v_2 \,|\,  (-R)v_3]
\end{align}
All the other cones are obtained by applying the rotation matrix to these basic cones, specifically:
\[
	\cC_k := R^k \cC_0 \quad \cC_k':=R^k\cC'_0 \quad k=1,\ldots, N-1.
\]
Our main result can then be stated as follows:
\begin{theorem}[Ping-pong property of cones]
	\label{thm:ping_pong_property_of_cones}
	Consider the projective cones $\bP\cC_k,\bP\cC'_k$ for $k=0,\ldots, N-1$ defined above.
	\begin{enumerate}
		\item The interiors of distinct cones are disjoint.
		\item For any of the cones $\bP\cC$ except $\cC_0$, we have that $A\cdot \bP\cC\subset \bP\cC_0$ where $A$ is the matrix from \autoref{sssec:the_reflection_matrices}.
	\end{enumerate}

	Therefore
	\[
		P\tilde{\Gamma}_N \isom \ip{a,b,c \, | \, a^2=b^2=c^2=1, (bc)^N=1}
	\]
	as a group.
\end{theorem}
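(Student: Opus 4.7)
The plan is to derive the group isomorphism from parts~(1) and~(2) by a ping-pong lemma for free products, and to establish parts~(1) and~(2) themselves by reducing to finitely many explicit checks using the dihedral symmetry of the cone configuration.

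\emph{Reformulation as a free product.} The subgroup $\ip{B,C}\subset P\tilde\Gamma_N$ is dihedral of order $2N$: projectively $B^2=C^2=\id$, the rotation $R=BC$ has order $N$, and $BRB=CB=R^{-1}$. Hence the abstract group $G_N$ decomposes as the free product $\bZ/2 * D_N$, with $\bZ/2=\ip{a}$ and $D_N=\ip{b,c}$. By the ping-pong lemma for free products, applicable since $|D_N|=2N>2$, the isomorphism follows once I exhibit disjoint subsets $X_1,X_2\subset \bP(\bR^4)$ with $A\cdot X_2\subset X_1$ and $g\cdot X_1\subset X_2$ for every non-trivial $g\in D_N$.

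\emph{The ping-pong setup.} Take $X_1:=\bP\cC_0$ and $X_2:=\bigcup_{\cC\neq \cC_0}\bP\cC$, the union running over the remaining $2N-1$ cones. The inclusion $A\cdot X_2\subset X_1$ is exactly part~(2). For the $D_N$-condition, the identities $\cC_k=R^k\cC_0$ and $\cC'_0=B\cC_0$, combined with $BR=R^{-1}B$, show that the $D_N$-orbit of $\cC_0$ equals the set $\{\cC_k,\cC'_k\}_{k=0}^{N-1}$ and that this action is simply transitive. Consequently every non-identity $g\in D_N$ sends $\bP\cC_0$ to one of the other cones, hence into $X_2$. The (essential) disjointness of $X_1$ and $X_2$ follows from part~(1); one applies either the open-cone version of ping-pong, or the standard variant that tolerates shared boundary faces.

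\emph{Proof of parts~(1) and~(2); main obstacle.} Both reduce, via $D_N$-equivariance, to a short explicit list. For part~(1), simple transitivity reduces everything to showing that $\bP\cC_0$ is disjoint from each other cone; using $B$ and $R$ this reduces further to verifying that $\bP\cC_0\cap \bP\cC'_0$ is a common face and that successive rotations $R^k\cC_0$ sweep through distinct angular sectors, i.e.\ a monotonicity statement along the rotation cycle. For part~(2), one computes $A\cdot R^k\cC_0$ and $A\cdot R^k\cC'_0$ for all relevant $k$; written in the basis $(v_0,v_1,v_2,v_3)$ spanning $\cC_0$, each inclusion reduces to positivity of certain entries of explicit $4\times 4$ matrix products. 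The main obstacle is establishing these positivities uniformly in $N$: the cyclotomic parameters $\mu_1=\tfrac{N-3}{N}\pi$ and $\mu_2=\tfrac{N-1}{N}\pi$, subject to the linear relation $3\mu_2-\mu_1=2\pi$, provide exactly the trigonometric identities needed for the inequalities to hold, and sharpening this analysis is the content of \autoref{sec:computations_with_rotated_vectors}.
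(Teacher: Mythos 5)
Your overall architecture coincides with the paper's: the free-product decomposition $G_N\isom \bZ/2 * D_N$, ping-pong with $\bP\cC_0$ against the union of the other $2N-1$ cones, reduction to the single cone $\cC_0$ via the simply transitive dihedral action on the collection $\{\cC_k,\cC_k'\}$, and the reduction of part (2) to non-negativity of the entries of the explicit products $MA(-R)^k v$ for $v\in\{v_0,v_1,v_2,v_3,Bv_1\}$ and $k=1,\dots,N-1$, with the trigonometric inequalities deferred to \autoref{sec:computations_with_rotated_vectors}. All of that matches the paper.

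The gap is in your treatment of part (1). The proposed reduction --- that ``successive rotations $R^k\cC_0$ sweep through distinct angular sectors'' --- is not workable: $R$ rotates by $\mu_1$ in the first symplectic block and by $\mu_2\neq\mu_1$ in the second, so the projections of the $N$ rotated cones to either $2$-plane overlap heavily (the first block wraps around three times as $k$ runs through $N$ steps), and there is no monotone angular coordinate on $\bP(\bR^4)$ along which the cones are ordered. The mechanism you would need to supply is the one in \autoref{sssec:contraction_implies_disjointness}: because $\cC_0$ is self-dual for $J$ and spanned by $A$-eigenvectors, the membership-certificate matrix $M=S\cdot\cC_0^t\cdot J$ has rows which are alternating $\pm1$-eigenvectors of $A$; consequently the first and third rows of $M$ and $MA$ differ by a sign while the second and fourth agree, so the very inequalities $MA(-R)^kv\geq 0$ proving contraction force $M(-R)^kv$ to carry the fixed mixed sign pattern $(-,+,-,+)$ on \emph{every} generator of each rotated cone, and this uniform pattern certifies that the interior of that cone avoids both $\cC_0$ and $-\cC_0$. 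Note in this connection that merely checking that each generator of a simplicial cone lies outside $\pm\cC_0$ does not by itself exclude an interior overlap of the two cones; it is the common sign pattern across all four (or five) generators that does the work. Without this observation, or a separate batch of verifications, part (1) remains unproved in your outline.
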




\subsection{Proof of the ping-pong property}
	\label{ssec:proof_of_the_ping_pong_property}

In this section we reduce the proof of \autoref{thm:ping_pong_property_of_cones} to certain positivity properties that will be verified in \autoref{sec:computations_with_rotated_vectors}.

\subsubsection{Disjointness property}
	\label{sssec:disjointness_property}
The dihedral group $\ip{B,C}$ acts freely and transitively on the set of projective cones $\{\bP\cC_i,\bP\cC_i'\}$.
Therefore, to verify disjointness of any pair, it suffices to verify disjointness of $\bP\cC_0$ from any other cone in the list.
Furthermore, because we work projectively, it suffices to show that the cones $(-R)^k \cC_0$ and $(-R)^k\cC_0'$ are disjoint from $\cC_0$ and $-\cC_0$ in $\bR^4$.

\subsubsection{Contraction property}
	\label{sssec:contraction_property}
Continuing to make use of the freedom to work projectively, for contraction it suffices to verify that $(-1)^k \cC_k$ is mapped by $A$ into $\cC_0$, and similarly for $(-1)^k\cC_k'$.
Recall that we have
\begin{align}
	\label{eqn:roated_cones_abstract}
	\begin{split}
	(-1)^k\cC_{k} & = (-R)^k\cC_0 =
	(-R)^k
	\left[
	v_0\, | \,
	v_1\, | \,
	v_2\, | \,
	v_3
	\right]	\\
	(-1)^k\cC_{k}' & = (-R)^k\cC_0' =
	(-R)^k
	\left[
	v_0\, | \,
	Bv_1\, | \,
	v_2\, | \,
	(-R)v_3
	\right]
	\end{split}
\end{align}
So for contraction it suffices to verify that the vectors
\begin{align}
	\label{eqn:vectors_to_test}
	(-R)^k v_0 \quad
	(-R)^k v_1 \quad
	(-R)^k v_2 \quad
	(-R)^k v_3 \quad
	(-R)^k Bv_1
\end{align}
are mapped by $A$ into the (closed) original cone $\cC_0$, for any $k=1,\ldots, N-1$.
Similarly, for disjointness of cones it suffices to verify that the above vectors are themselves disjoint from the (closed) cone and its opposite (recall that we must check disjointness projectively).
Let us emphasize here that we will verify this assertion for the cone $\cC_0$ and vectors viewed in $\bR^4$, not their projective versions.

Note that there is one exceptional case, namely for the vector $v_3$ we also have to consider $(-R)^N v_3$.
However $(-R)^N=-\id$ independently of the parity of $N$, and $Av_3=-v_3$, so the required positivity properties will follow straightforwardly in this endpoint case.

\subsubsection{Verifying inclusion in a cone}
	\label{sssec:verifying_inclusion_in_a_cone}
For a simplicial cone $\cC$ in $\bR^4$, we will use the same letter for the matrix of its columns.
In general, to certify that a column vector $v$ belongs to $\cC$ one must first compute a matrix $\check{\cC}$, which defines the faces of $\cC$, and check that $\check{\cC}v$ has only non-negative entries.
Conversely, if the result has at least one strictly negative entry, the vector is not in the cone.

Our cone $\cC_0$ will have an additional self-duality property under the symplectic pairing.
Specifically, we will find that $\cC_0^t \cdot J\cdot \cC_0$ is anti-diagonal, where $J$ is the matrix of the symplectic pairing.
Let now $S$ be a diagonal matrix with the same signs on the diagonal as the anti-diagonal matrix $\cC_0^t \cdot J \cdot \cC_0$.
We are thus lead to define the matrix:
\begin{align}
	\label{eqn:M_test_matrix}
	M:=S \cdot \cC_0^t\cdot  J
\end{align}
which gives the following certificate on a column vector $v\in \bR^4$.
Consider $Mv$: if all entries are non-negative then $v$ belongs to $\cC_0$, and if at least one entry is strictly negative then it is in the exterior.
Furthermore, if one entry is strictly negative and one is strictly positive, then $v$ is disjoint both from $\cC$ and $-\cC$, so disjoint projectively.

\subsubsection{Contraction implies disjointness}
	\label{sssec:contraction_implies_disjointness}
By the discussion in the preceding paragraphs, our task is reduced to showing that for certain vectors $v$ listed in \autoref{eqn:vectors_to_test}, the vector $MAv$ has all entries positive (to certify contraction) while $Mv$ has two entries of opposite sign (to certify projective disjointness).

Our matrix $M$ will have the further useful property that its rows are eigenvectors of $A$, with eigenvalues $\pm 1$.
Specifically the first and third rows are (right) eigenvectors with eigenvalue $+1$, and the second and fourth rows have eigenvalue $-1$.
So if all entries of $MAv$ are non-negative, and at least three entries are not zero, then $Mv$ has two entries of opposite sign.

Note that the property of $M$ to have rows which are (right) eigenvectors of $A$ is equivalent, by the construction of $M$, to the property that the original cone $\cC_0$ is spanned by eigenvectors of $A$, with the sign pattern of eigenvalues flipped since $A^t J A = - J$.

\subsubsection{Summary}
	\label{sssec:summary_of_proof}
To sum up, we have reduced the proof of \autoref{thm:ping_pong_property_of_cones} to showing that vectors of the form $MA(-R)^kv$ have all entries non-negative (and some strictly positive) for $k=1\ldots N-1$ and $v\in \{v_0,v_1,v_2,v_3,Bv_1\}$.
We next exhibit in \autoref{ssec:explicit_cones_and_properties} the explicit vectors and matrices described above and verify that they satisfy the useful properties we stated.
We then proceed to actually verify the required positivity properties in \autoref{sec:computations_with_rotated_vectors}.



\subsection{Explicit cones and properties}
	\label{ssec:explicit_cones_and_properties}

\subsubsection{Further abbreviations}
	\label{sssec:further_abbreviations}
Besides the abbreviations $c_i,s_i$ for cosines and sines from \autoref{sssec:abbreviations_c_i_def}, the following quantities
\begin{align}
	\label{eqn:cc_i_definition}
	cc_1 := 1 - c_1  && cc_2 := 1-c_2
\end{align}
will prove useful.
Additionally it will prove useful to introduce the quantities:
\begin{align}
	\label{eqn:L_i_def}
	\begin{split}
	L_1 & := \frac{cc_1\cdot cc_2 - 3(cc_2-cc_1)}{-cc_1\cdot cc_2 + 3(cc_1+cc_2)}\\
	L_2 & := \frac{cc_1\cdot cc_2 + 3(cc_2-cc_1)}{-cc_1\cdot cc_2 + 3(cc_1+cc_2)}\\
	\end{split}
\end{align}
These quantities satisfy a number of useful identities which will be discussed below.
The main ones, which characterize the $L_i$ in terms of $cc_i$, are:
\begin{align}
	\label{eqn:L_i_cc_i_identity}
	3(L_1+L_2) = (L_1+1)cc_2 = (L_2+1)cc_2
\end{align}
We will deduce some further properties of these quantities in \autoref{sssec:the_parameters_l_i}

\subsubsection{The cone}
	\label{sssec:the_cone}
With these preparations, here is the cone:
\begin{align}
	\label{eqn:fundamental_cone_definition}
	\cC_0:=
	\begin{bmatrix}
	0                          & -L_2\cdot cc_1      & 0                 & cc_1\\
	-L_1\cdot \frac{cc_1}{s_1} & \frac{-cc_1^2}{s_1} & -\frac{cc_1}{s_1} & -\frac{cc_1^2}{s_1}\\
	0                          & L_1\cdot cc_2       & 0                 & -cc_2\\
	L_2\cdot \frac{cc_2}{s_2}  & \frac{cc_2^2}{s_2}  & \frac{cc_2}{s_2}  & \frac{cc_2^2}{s_2}
	\end{bmatrix}
\end{align}

\subsubsection{Self-duality of the cone}
	\label{sssec:self_duality_of_the_cone}
Recall that we introduced the matrix of the symplectic pairing $J$ in \autoref{eqn:symplectic_pairing_J_def}.
Then it is a direct algebraic verification that:
\[
	\cC_0^t \cdot J \cdot \cC_0 =
	\begin{bmatrix}
		0       & 0      & 0     & -\alpha\\
		0       & 0      & \alpha     & 0\\
		0       & -\alpha     & 0     & 0\\
		\alpha       & 0      & 0     & 0 \\
	\end{bmatrix}
	\quad \alpha:=2\cdot \frac{L_1-L_2}{cc_1-cc_2}\cdot\frac{cc_1^2cc_2^2}{s_1s_2}
\]
The verification of this is immediate, the only non-trivial algebraic manipulation is to show that the fourth entry in the second row, and the second entry in the fourth row, vanish.
This ultimately follows from the identity $(L_1+1)cc_2 = (L_2+1)cc_1$ stated in \autoref{eqn:L_i_cc_i_identity}.
It is important to note that $\alpha>0$, since $L_2>L_1$ and $cc_1>cc_2$, as will be established later in \autoref{sssec:the_parameters_l_i}.

\subsubsection{Contraction matrix}
	\label{sssec:contraction_matrix}
The identity for $\cC_0^tJ\cC_0$ from the previous paragraph implies that the equations for the faces of the cone are given by taking the symplectic pairing against the spanning vectors (with appropriate signs).
So if we denote by $S'$ the diagonal matrix with entries $1,-1,1,-1$ it follows that $S'\cdot \cC_0^t \cdot J$ is the matrix that detects if a vector belongs, or not, to the cone $\cC_0$.
In fact, we could have used instead of $S'$ any diagonal matrix with entries having the same pattern of signs.
Let us therefore consider the matrix :
\[
	M:= \tfrac{1}{\alpha'}\cdot S' \cdot  \cC_0^t \cdot J
	\quad \text{ where }\quad
	\alpha':=\frac{2}{cc_2-cc_1}\cdot\frac{cc_1^2cc_2^2}{s_1s_2}>0
\]
Computing it explicitly yields:
\begin{align}
	\label{eqn:belonging_matrix}
	M =
	\begin{bmatrix}
	   -\frac{L_1}{cc_1} & 0                     & -\frac{L_2}{cc_2} & 0\\
	   1                 & -L_2 \frac{s_1}{cc_1} & 1                 & -L_1 \frac{s_2}{cc_2}\\
	   -\frac{1}{cc_1}   & 0                     & -\frac{1}{cc_2}   & 0\\
	   1                 & \frac{s_1}{cc_1}      & 1                 & \frac{s_2}{cc_2}
	\end{bmatrix}
\end{align}

\subsubsection{Why contraction implies disjointness}
	\label{sssec:why_contraction_implies_disjointness}
Recall that in order to establish \autoref{thm:ping_pong_property_of_cones}, we must verify that for certain vectors $v$ the vector $MAv$ has all entries non-negative (the contraction action of $A$) and that for the same vectors $Mv$ has two entries of opposite sign.

We next observe that the matrices $M$ and $MA$ have the property that their second and fourth rows agree, while the first and third are negative of each other.
Specifically, a direct computation yields:
\begin{align}
	\label{eqn:MA_definition}
	MA =
	\begin{bmatrix}
	   \frac{L_1}{cc_1} & 0                     & \frac{L_2}{cc_2} & 0\\
	   1                 & -L_2 \frac{s_1}{cc_1} & 1                 & -L_1 \frac{s_2}{cc_2}\\
	   \frac{1}{cc_1}   & 0                     & \frac{1}{cc_2}   & 0\\
	   1                 & \frac{s_1}{cc_1}      & 1                 & \frac{s_2}{cc_2}
	\end{bmatrix}
\end{align}
In order to verify this identity, we just multiply the expression for $M$ from \autoref{eqn:M_test_matrix} and for $A$ from \autoref{sssec:the_reflection_matrices}.
The only identity that needs to be used (when computing the first and fourth entries of the second row) is that
\[
	1 = \frac{L_1cc_2 - L_2 cc_1}{cc_1-cc_2}
\]
which follows readily from \autoref{eqn:L_i_cc_i_identity}.

From the formulas for the matrices $M$ and $MA$, it is now clear that if $MAv$ has all entries non-negative, and at least one entry in each of the pairs $\{\text{first,third}\}$ and $\{\text{second,fourth}\}$ is nonzero, then automatically $Mv$ will have two entries of opposite sign.
It follows that it suffices to consider vectors of the form $MAv$ and establish that their entries are non-negative; it will be transparent from the calculations that the needed nonvanishing will also hold.



\section{Computations with rotated vectors}
	\label{sec:computations_with_rotated_vectors}

\paragraph{Outline of section}
In \autoref{ssec:the_vectors_their_rotations_and_contraction} we introduce powers of the rotation matrix which is used to transport the vectors of interest.
Next we introduce some further notation and some background calculations in \autoref{ssec:frequently_used_expressions}.

The bulk of the calculations is performed in the remaining sections.
We tackle the vectors in increasing order of complexity, namely $v_2, v_0, v_3, v_1, Bv_1$.


\subsection{The vectors, their rotations, and contraction}
	\label{ssec:the_vectors_their_rotations_and_contraction}

Recall that the vectors we need to consider are $v_0, v_1, v_2, v_3$ and $Bv_1$, where the $v_i$ span the cone $\cC_0$.
Here are the vectors:
\begin{align}
	\label{eqn:list_of_vectors}
	\left[v_0|v_1|v_2|v_3|Bv_1\right]=
	\begin{bmatrix}
	0                          & -L_2 cc_1      & 0                 & cc_1 & -L_2 cc_1\\
	-L_1 \frac{cc_1}{s_1} & \frac{-cc_1^2}{s_1} & -\frac{cc_1}{s_1} & -\frac{cc_1^2}{s_1} & \frac{cc_1^2}{s_1}
	\\
	0                          & L_1 cc_2       & 0                 & -cc_2 & L_1 cc_2\\
	L_2 \frac{cc_2}{s_2}  & \frac{cc_2^2}{s_2}  & \frac{cc_2}{s_2}  & \frac{cc_2^2}{s_2} & \frac{-cc_2^2}{s_2}
	\end{bmatrix}
\end{align}

\subsubsection{Passing to $\theta_N$-parameter}
	\label{sssec:passing_to_theta_n_parameter}
When facing trigonometric expressions, it will be convenient to express everything in terms of the basic angle
\begin{align}
	\label{eqn:thetaN_def}
	\theta_N = \frac{\pi}{N}\text{ so that }
	\mu_1 = \pi - 3\theta_N \text{ and }
	\mu_2 = \pi - \theta_N .
\end{align}
It is then immediate that:
\begin{align}
	\label{eqn:theta_N_mu_i}
	\begin{split}
	\cos(\mu_1)&=\cos(\pi-3\theta_N)=-\cos(3\theta_N)\\
	\sin(\mu_1)&=\sin(\pi-3\theta_N)=\sin(3\theta_N)\\
	\cos(\mu_2)&=\cos(\pi-\theta_N)=-\cos(\theta_N)\\
	\sin(\mu_2)&=\sin(\pi-\theta_N)=\sin(\theta_N)
	\end{split}
\end{align}
Recall next that the matrix $R=BC$ is block-diagonal, rotating in the first block by $\mu_1$ and in the second by $\mu_2$.
Therefore we have:
\[
	(-R) =
	\begin{bmatrix}
		\cos(3\theta_N)  & \sin(3\theta_N) & 0               & 0\\
		-\sin(3\theta_N) & \cos(3\theta_N) & 0               & 0\\
		0                & 0               & \cos(\theta_N)  & \sin(\theta_N)\\
		0                & 0               & -\sin(\theta_N) & \cos(\theta_N)\\
	\end{bmatrix}
\]
so $(-R)$ is rotation by $-3\theta_N$ in the first block and by $-\theta_N$ in the second block.
To abbreviate further the sines of multiple angles, we will use the notation:
\begin{align}
	\label{eqn:sine_cosine_powers_sp_i_cp_i}
	\begin{split}
		cp_1 & := \cos(3k\theta_N)	\\
		cp_2 & := \cos(k\theta_N)
	\end{split}
	\begin{split}
		sp_1 & := -\sin(3k\theta_N )	\\
		sp_2 & := -\sin(k\theta_N )
	\end{split}
\end{align}
where $cp$ and $sp$ are meant to denote ``cosine power'' and ``sine power''.
These are precisely the entries of $(-R)^k$:
\begin{align}
	\label{eqn:minusRk}
	(-R)^k =
	\begin{bmatrix}
		cp_1  & -sp_1 & 0               & 0\\
		sp_1 & cp_1 & 0               & 0\\
		0                & 0               & cp_2  & -sp_2\\
		0                & 0               & sp_2 & cp_2\\
	\end{bmatrix}
\end{align}
We have chosen to express $(-R)^k$ with the sign choices standard for a counterclockwise rotation matrix, but the reader should keep in mind that the signs of sines are as stated in \autoref{eqn:sine_cosine_powers_sp_i_cp_i}.

Let us finally recall the matrix $MA$ from \autoref{eqn:MA_definition}:

\begin{align}
	\label{eqn:MA_definition_repeated}
	MA =
	\begin{bmatrix}
	   \frac{L_1}{cc_1} & 0                     & \frac{L_2}{cc_2} & 0\\
	   1                 & -L_2 \frac{s_1}{cc_1} & 1                 & -L_1 \frac{s_2}{cc_2}\\
	   \frac{1}{cc_1}   & 0                     & \frac{1}{cc_2}   & 0\\
	   1                 & \frac{s_1}{cc_1}      & 1                 & \frac{s_2}{cc_2}
	\end{bmatrix}
\end{align}
Our task has been reduced to computing $MA (-R)^k v$, for $k=1\ldots N-1$, and for each column vector $v$ in \autoref{eqn:list_of_vectors}.

This will take up the rest of this section, after some preliminary recollections from trigonometry in the next section.



\subsection{Frequently used expressions}
	\label{ssec:frequently_used_expressions}

\subsubsection{Some trigonometric formulas}
	\label{sssec:some_trigonometric_formulas}
Since $\mu_1\equiv 3\mu_2 \mod 2\pi$ it will be useful to make use of angle-tripling formulas:
\begin{align}
\begin{split}
	\label{eqn:angle_tripling}
	\sin(3\theta) & = \sin(\theta)\cdot\left(3 - 4\sin^2 (\theta)\right)
	= \sin(\theta)\cdot (2\cos(2\theta)+1)\\
	\cos(3\theta) & = \cos(\theta)\cdot\left(4\cos^2(\theta) - 3\right)
	 = \cos(\theta) \cdot \left( 2 \cos(2\theta) - 1\right)
\end{split}
\end{align}
Besides considering $\cos(3\theta)/\cos(\theta)$ and similarly for sine, we will frequently also use the following difference of cosines:
\begin{align}
	\begin{split}
	\label{eqn:difference_triple_cosine}
	\cos(3\theta)-\cos(\theta) & = \cos(\theta)\cdot 2 \cdot(\cos(2\theta)-1)\\
	& = -4\cdot \cos(\theta)\cdot \sin^2(\theta)\\
	& = -\sin(\theta)\cdot 2\cdot \sin(2\theta)\\
	\end{split}
	\intertext{and its analogue for sines:}
	\label{eqn:difference_triple_sine}
	\begin{split}
		\hfill \sin(3\theta)-\sin(\theta) = \sin(\theta)\cdot 2 \cdot \cos(2\theta).
	\end{split}
\end{align}

We'll also make use of the standard addition/subtraction formulas:
\begin{align}
	\label{eqn:sine_cosine_addition_subtraction}
	\begin{split}
	\cos(\alpha+\beta) & = \cos(\alpha)\cos(\beta) - \sin(\alpha)\sin(\beta)\\
	\cos(\alpha-\beta) & = \cos(\alpha)\cos(\beta) + \sin(\alpha)\sin(\beta)\\
	\sin(\alpha+\beta) & = \sin(\alpha)\cos(\beta) + \cos(\alpha)\sin(\beta)\\
	\sin(\alpha-\beta) & = \sin(\alpha)\cos(\beta) - \cos(\alpha)\sin(\beta)\\
	\end{split}
\end{align}

\subsubsection{Abbreviations}
	\label{sssec:abbreviations}
Recall that we introduced the algebraic expressions
\begin{align}
	\label{eqn:cci_notation}
	\begin{split}
	cc_1 & := 1 - c_1 \quad \text{ so that }\quad \frac{cc_1}{s_1} = \frac{\sin(\mu_1/2)}{\cos(\mu_1/2)}
	= \frac{\cos(3\theta_N/2)}{\sin(3\theta_N/2)}
	\\
	cc_2 & := 1 - c_2 \quad \text{ so that }\quad \frac{cc_2}{s_2} = \frac{\sin(\mu_2/2)}{\cos(\mu_2/2)}
	= \frac{\cos(\theta_N/2)}{\sin(\theta_N/2)}
	\end{split}
\end{align}
where we used the formulas $1-\cos(\theta)=2\sin^2(\theta/2)$ and $\sin(\theta)=2\sin(\theta/2)\cos(\theta/2)$.

\subsubsection{The parameters $L_i$}
	\label{sssec:the_parameters_l_i}
To shorten notation, we introduced in \autoref{eqn:L_i_def} the constants $L_1,L_2$ which we will mostly use through the identities:
\begin{align}
	\label{eqn:L_i_identities}
	3(L_1+L_2) = (L_1+1)cc_2 = (L_2+1)cc_1 = \tfrac{6\cdot cc_1\cdot cc_2}{-cc_1\cdot cc_2+ 3(cc_1+cc_2)}
\end{align}
which are verified directly from the definitions.

Let us record the basic inequalities which we use frequently :
\[
	cc_2 > cc_1 \quad \text{ and }\quad L_2 > L_1
\]
The first one is verified from the definitions of $\mu_i$ while the second one follows from the first, which we will also frequently write as $cc_2-cc_1>0$.\\
The relations between $L_1$ and $L_2$ imply
\begin{align*}
	(L_1+1) cc_2 = 3 (L_1+L_2) > 6 L_1 \iff cc_2 > (6-cc_2) L_1 \\
	\iff L_1 < \frac {cc_2} {6-cc_2}.
\end{align*}
As $cc_2 < 2$ we obtain $L_1 < \frac{1}{2}$.
This also implies the following
\[
	3(L_1+L_2) = (L_1+1) cc_2 < \frac {3} {2} \cdot 2 = 3.
\]
Hence
\begin{align}
	\label{sum_L}
	L_1+L_2 < 1.
\end{align}


\subsubsection{The difference of cosines}
	\label{sssec:the_difference_of_cosines}
Using the triple-angle formula we find:
\begin{align*}
	c_1 - c_2 &= \cos(3\mu_2)-\cos(\mu_2) = 4\cos(\mu_2)\big[\cos(\mu_2)^2-1\big] \\
	& = - 4 c_2 s_2^2\\
	& = 4(c_2-1)c_2(c_2+1)\\
	& > 0
\end{align*}
The sign $c_1-c_2>0$ follows from the above algebraic expressions and $c_2<0$, or by looking at the explicit values of $\mu_i$.

\subsubsection{The smallest and largest sines and cosines}
	\label{sssec:the_smallest_and_largest_sines_and_cosines}
Our rotation will range over $k=1,\ldots, N-1$.
We have the elementary inequalities
\[
	0< \sin(\mu_2) \leq \abs{\sin \left( k\frac{N-1}{N}\pi \right)} \leq |\cos(\mu_2)|<1
\]

\subsubsection{Frequently occurring differences of ratios}
The following manipulation is used frequently, so we record it once here, using the properties of the ratios $cc_i/s_i$ from \autoref{eqn:cci_notation} as well as the addition formula for sines \autoref{eqn:sine_cosine_addition_subtraction}:
\begin{align}
	\label{sssec:a_frequently_occurring_difference_of_ratios}
	\begin{split}
	\frac{cc_2}{s_2} - \frac{cc_1}{s_1} & =
	\frac{\cos(\theta_N/2)}{\sin(\theta_N/2)} - \frac{\cos(3\theta_N/2)}{\sin(3\theta_N/2)}\\
	& =
	\frac{\sin(\theta_N)}{\sin(\theta_N/2)\sin(3\theta_N/2)}\\
	& = \frac{2\cancel{\sin(\theta_N/2)}\cos(\theta_N/2)}{\cancel{\sin(\theta_N/2)}\sin(3\theta_N/2)}\\
	& = 2\frac{\cos(\theta_N/2)}{\sin(3\theta_N/2)}
	\end{split}
\end{align}

We now give a list of useful inequalities appearing with powers of the rotation.
\begin{align}
	\label{eqn:ratio_sine_powers_difference}
	\begin{split}
		\frac{sp_1}{s_1}
		- \frac{sp_2}{s_2} & =
		\frac{\sin(k\theta_N)}{\sin(3\theta_N)} \cdot \left(\frac {-\sin(3k\theta_N)} {\sin(k\theta_N)} + \frac {\sin(3\theta_N)} {\sin(\theta_N)} \right)\\
		& = 2 \cdot \frac{\sin(k\theta_N)}{\sin(3\theta_N)} \cdot \left( \cos(2\theta_N) - \cos(2k\theta_N)\right)\\
	\end{split}
\end{align}
In any event, this expression is always negative (non-positive) for $k=1,\ldots,N-1$.
It vanishes precisely for $k=1,N-1$.

Similarly
\begin{align}
	\label{eqn:ratio_sine_powers_sum}
	\begin{split}
		\frac{sp_1}{s_1}
		+ \frac{sp_2}{s_2} & =
		- \frac{\sin(k\theta_N)}{\sin(3\theta_N)} \cdot \left(\frac {\sin(3k\theta_N)} {\sin(k\theta_N)} + \frac {\sin(3\theta_N)} {\sin(\theta_N)} \right)\\
		& = -2 \cdot \frac{\sin(k\theta_N)}{\sin(3\theta_N)} \cdot \left( \cos(2k\theta_N) + \cos(2\theta_N) + 1\right)\\
	\end{split}
\end{align}
which is always non-positive.

Next we have the combinations
\[
	\frac{cc_1}{s_1}sp_1 \pm \frac{cc_2}{s_2}sp_2.
\]
Recall that $cc_i = 1 - c_i$ and $c_1=-\cos(3\theta_N),c_2=-\cos(\theta_N)$ and we'll use the basic identity
\[
	1+\cos(\theta) = 2 - 2\sin^2(\theta/2) = 2\cos^2(\theta/2)
\]
to reduce to the consideration of
\begin{multline}
	\label{eqn:ratio_sine_power_half_angle_difference}
	\frac{cc_1}{s_1}sp_1 - \frac{cc_2}{s_2}sp_2 =
	-\frac{\cos(3\theta_N/2)}{\sin(3\theta_N/2)} \cdot \sin(3k\theta_N)
	+ \frac{\cos(\theta_N/2)}{\sin(\theta_N/2)} \cdot \sin(k\theta_N) \\
	= \sin(k\theta_N)\left[
	-\frac{\cos(3\theta_N/2)}{\sin(3\theta_N/2)} \cdot \left(2\cos(2k\theta_N)+1\right) + \frac{\cos(\theta_N/2)}{\sin(\theta_N/2)}
	\right]	\\
	= \sin(k\theta_N)\left[
	-2\cos(2k\theta_N)\frac{\cos(3\theta_N/2)}{\sin(3\theta_N/2)} + \frac{\sin(\theta_N)}{\sin(3\theta_N/2)\sin(\theta_N/2)}
	\right]	\\
	= \frac{2\sin(k\theta_N)}{\sin(3\theta_N/2)}\left[
	\cos\left(\frac{\theta_N}{2}\right)-
	\cos(2k\theta_N)\cos\left(\frac{3\theta_N}{2}\right)
	\right]
\end{multline}
This expression is manifestly non-negative, since the value $\cos(\theta_N/2)$ is larger that $\cos(3\theta_N/2)$ for all $N \ge 4$.

Let's do the same but with a sum:
\begin{multline}
	\label{eqn:ratio_sine_power_half_angle_sum}
	\frac{cc_1}{s_1}sp_1 + \frac{cc_2}{s_2}sp_2 =
	-\frac{\cos(3\theta_N/2)}{\sin(3\theta_N/2)} \cdot \sin(3k\theta_N)
	-\frac{\cos(\theta_N/2)}{\sin(\theta_N/2)} \cdot \sin(k\theta_N) \\
	= \sin(k\theta_N)\left[
	-2\cos(2k\theta_N)\frac{\cos(3\theta_N/2)}{\sin(3\theta_N/2)} - \frac{\sin(2\theta_N)}{\sin(3\theta_N/2)\sin(\theta_N/2)}
	\right]	\\
	= -\frac{2\sin(k\theta_N)}{\sin(3\theta_N/2)}\big[
	\cos(2k\theta_N)\cos(3\theta_N/2) + 2\cos(\theta_N)\cos(\theta_N/2)
	\big]
\end{multline}
This is non-positive, in fact the term $\cos(\theta_N)\cos(\theta_N/2)$ is larger in absolute value than the other one $\cos(2k\theta_N)(\cos(3\theta_N/2))$.\\

We give in the following two propositions simplified expression for some combinations of terms that appear several times in our computations.
\begin{proposition}
	\label{cc_s}
	The following formulas hold:
	\begin{align*}
		\pm  cp_1 - \frac {cc_1} {s_1} sp_1 &= \frac {\sin(3(k\mp\frac {1} {2}) \theta_N )} {\sin(3 \theta_N / 2)}\\
		\pm  cp_2 - \frac {cc_2} {s_2} sp_2 &= \frac {\sin((k\mp\frac {1} {2}) \theta_N )} {\sin(\theta_N / 2)}\\
		\pm (cp_1 - cp_2) - \frac {cc_1} {s_1} sp_1 + \frac {cc_2} {s_2} sp_2 &=  - 4 \cdot \frac {\sin((k\pm\frac {1} {2}) \theta_N)} {\sin(3\theta_N/2)} \cdot \sin\left(k \theta_N\right) \cdot \sin\left((k \pm 1) \theta_N\right)
	\end{align*}
\end{proposition}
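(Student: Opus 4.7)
The plan is to verify each of the three identities by direct trigonometric manipulation. The key inputs are the definitions $cp_1 = \cos(3k\theta_N)$ and $sp_1 = -\sin(3k\theta_N)$ (with analogues for the index $2$) from \autoref{eqn:sine_cosine_powers_sp_i_cp_i}, the half-angle simplifications $cc_1/s_1 = \cos(3\theta_N/2)/\sin(3\theta_N/2)$ and $cc_2/s_2 = \cos(\theta_N/2)/\sin(\theta_N/2)$ from \autoref{eqn:cci_notation}, together with the sine addition formulas \autoref{eqn:sine_cosine_addition_subtraction} and the triple-angle formula \autoref{eqn:angle_tripling}.

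For the first identity, I substitute the definitions and clear the denominator $\sin(3\theta_N/2)$. The resulting numerator $\pm\cos(3k\theta_N)\sin(3\theta_N/2) + \sin(3k\theta_N)\cos(3\theta_N/2)$ collapses via the sine addition formula to $\sin\bigl(3k\theta_N \pm 3\theta_N/2\bigr) = \sin\bigl(3(k\pm 1/2)\theta_N\bigr)$. The second identity follows by the same argument with $3\theta_N$ replaced by $\theta_N$ throughout.

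For the third identity, its left-hand side is by inspection the difference of the left-hand sides of the first two identities, so the target becomes
\begin{equation*}
	\frac{\sin\bigl(3(k\mp 1/2)\theta_N\bigr)}{\sin(3\theta_N/2)} - \frac{\sin\bigl((k\mp 1/2)\theta_N\bigr)}{\sin(\theta_N/2)}.
\end{equation*}
Setting $\alpha := (k \mp 1/2)\theta_N$ and $\beta := \theta_N/2$, the triple-angle formula $\sin(3x) = \sin(x)\bigl(2\cos(2x) + 1\bigr)$ rewrites both $\sin(3\alpha)$ and $\sin(3\beta)$ as products of $\sin(\alpha)\sin(\beta)$ with the factors $(2\cos(2\alpha)+1)$ and $(2\cos(2\beta)+1)$ respectively. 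After bringing to the common denominator $\sin(3\beta)\sin(\beta)$, the numerator simplifies to $2\sin(\alpha)\sin(\beta)\bigl[\cos(2\alpha) - \cos(2\beta)\bigr]$. The product-to-sum identity $\cos(2\alpha) - \cos(2\beta) = -2\sin(\alpha+\beta)\sin(\alpha-\beta)$ then produces $-4\sin(\alpha)\sin(\alpha+\beta)\sin(\alpha-\beta)/\sin(3\beta)$; evaluating $\alpha\pm\beta$ with the chosen sign yields the factors $\sin(k\theta_N)$ and $\sin\bigl((k\pm 1)\theta_N\bigr)$ on the right-hand side.

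No serious obstacle is expected, as the proposition reduces to routine trigonometric bookkeeping. The only point requiring care is keeping the $\pm/\mp$ conventions consistent across the three identities and verifying that $\alpha+\beta$ and $\alpha-\beta$ produce precisely the shifts $k\theta_N$ and $(k\pm 1)\theta_N$ asserted on the right-hand side.
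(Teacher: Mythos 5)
Your method is exactly the paper's: for the first two identities, substitute the definitions, clear the denominator, and collapse the numerator with the sine addition formula; for the third, write it as the difference of the first two, factor out $\sin(\alpha)/\sin(3\beta)$, apply the triple-angle formula, and finish with the difference-of-cosines identity. In substance this is the same proof.

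However, there is a sign-bookkeeping problem you must fix --- and it is worth flagging because the proposition as printed, and the paper's own proof, suffer from the same defect. With the definitions $cp_1=\cos(3k\theta_N)$ and $sp_1=-\sin(3k\theta_N)$ from \autoref{eqn:sine_cosine_powers_sp_i_cp_i}, your computation of the first identity is the correct one: the numerator is $\sin\bigl(3k\theta_N\pm 3\theta_N/2\bigr)=\sin\bigl(3(k\pm\tfrac{1}{2})\theta_N\bigr)$, so the sign on the right tracks the sign on the left ($\pm$, not the stated $\mp$; for $N=6$, $k=2$, top sign, the left side is $-1$ while the stated right side is $+1$). You do not remark on this discrepancy with the statement, and then in the third identity you silently revert to the $\mp$ convention by setting $\alpha=(k\mp\tfrac{1}{2})\theta_N$. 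With that choice, $\alpha+\beta$ and $\alpha-\beta$ produce $\sin(k\theta_N)\sin\bigl((k\mp 1)\theta_N\bigr)$ with prefactor $\sin\bigl((k\mp\tfrac{1}{2})\theta_N\bigr)$ --- not the $\sin\bigl((k\pm 1)\theta_N\bigr)$ and $\sin\bigl((k\pm\tfrac{1}{2})\theta_N\bigr)$ you assert, so your last step does not follow from your own setup. The repair is to carry your (correct) $\pm$ versions of the first two identities into the third, i.e.\ take $\alpha=(k\pm\tfrac{1}{2})\theta_N$: then $\alpha\pm\beta$ yield exactly $\sin\bigl((k\pm 1)\theta_N\bigr)$ and $\sin(k\theta_N)$, and you land on the third identity precisely as stated --- which is the form actually invoked later, e.g.\ for the fourth entry of $MA(-R)^kv_2$. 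In short: same approach, correct computations, but the $\pm/\mp$ conventions --- the one point you yourself identified as requiring care --- are not kept consistent between your first and third steps.
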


\begin{proof}
On one hand we have
\begin{align*}
	\pm cp_1 - \frac {cc_1} {s_1} sp_1 &= \mp \cos(3k \theta_N) + \frac {\cos(3 \theta_N / 2)} {\sin(3 \theta_N / 2)} \sin(3k \theta_N) \\
				       &= \frac {\sin(3(k\mp\frac {1} {2}) \theta_N )} {\sin(3 \theta_N / 2)}
\end{align*}
and similarly for the second equality.

We now consider:
\begin{align*}
	\frac {\sin(3(k\mp\frac {1} {2}) \theta_N )} {\sin(3\theta_N / 2)} - \frac {\sin((k\mp\frac {1} {2}) \theta_N )} {\sin(\theta_N /2)}
	& = \frac {\sin((k\mp\frac {1} {2}) \theta_N)} {\sin(3\theta_N/2)} \left(\frac {\sin(3(k\mp\frac {1} {2})\theta_N)} {\sin((k\mp\frac {1} {2})\theta_N)} - \frac {\sin(3\theta_N/2)} {\sin(\theta_N/2)} \right)\\
	& = 2 \cdot \frac {\sin((k\mp\frac {1} {2}) \theta_N)} {\sin(3\theta_N/2)} \left( \cos((2k \mp 1) \theta_N) - \cos(\theta_N) \right)\\
	& = - 4 \cdot \frac {\sin((k\mp\frac {1} {2}) \theta_N)} {\sin(3\theta_N/2)} \cdot \sin\left(k \theta_N\right) \cdot \sin\left((k \mp 1) \theta_N\right)
\end{align*}
\end{proof}

The second proposition is for a similar expression where we invert the fractions.
\begin{proposition}
	\label{s_cc}
	The following formulas hold:
	\begin{align*}
		cp_1 \pm  \frac {s_1} {cc_1} sp_1 &= \frac {\cos(3(k\pm\frac {1} {2}) \theta_N )} {\cos(3 \theta_N / 2)}\\
		cp_2 \pm  \frac {s_2} {cc_2} sp_2 &= \frac {\cos((k\pm\frac {1} {2}) \theta_N )} {\cos(\theta_N / 2)}\\
		cp_1 - cp_2 \pm\left(\frac{s_1}{cc_1} sp_1 - \frac{s_2}{cc_2} sp_2\right) &= - 4 \cdot \frac {\cos((k\pm\frac {1} {2}) \theta_N)} {\cos(3\theta_N/2)} \cdot \sin\left(k \theta_N\right) \cdot \sin\left((k \pm 1) \theta_N\right)
	\end{align*}
\end{proposition}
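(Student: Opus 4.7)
The proposition is the exact dual of Proposition \ref{cc_s}, with the ratio $\frac{cc_i}{s_i}$ replaced by its reciprocal $\frac{s_i}{cc_i}$. By the half-angle identities already recorded in \autoref{eqn:cci_notation}, namely $1-\cos\theta = 2\sin^2(\theta/2)$ and $\sin\theta = 2\sin(\theta/2)\cos(\theta/2)$, these reciprocals satisfy
\[
	\frac{s_1}{cc_1} = \frac{\sin(3\theta_N/2)}{\cos(3\theta_N/2)}, \qquad
	\frac{s_2}{cc_2} = \frac{\sin(\theta_N/2)}{\cos(\theta_N/2)}.
\]
So the strategy is the same as for \autoref{cc_s}, but the cosine addition formula takes the place of the sine addition formula, and the triple-angle identity for cosine takes the place of the one for sine.

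\textbf{First two identities.} For the first formula, substitute $cp_1 = \cos(3k\theta_N)$ and $sp_1 = -\sin(3k\theta_N)$ from \autoref{eqn:sine_cosine_powers_sp_i_cp_i}, multiply through by $\cos(3\theta_N/2)$, and read off
\[
	\cos(3k\theta_N)\cos(3\theta_N/2) \mp \sin(3k\theta_N)\sin(3\theta_N/2) = \cos\bigl(3k\theta_N \pm 3\theta_N/2\bigr) = \cos\bigl(3(k\pm \tfrac12)\theta_N\bigr)
\]
using the cosine addition formula from \autoref{eqn:sine_cosine_addition_subtraction}. Dividing back by $\cos(3\theta_N/2)$ yields the first identity; the second is the same computation with $\theta_N$ in place of $3\theta_N$.

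\textbf{Third identity.} Apply the first two identities to rewrite the left-hand side as
\[
	\frac{\cos\bigl(3(k\pm \tfrac12)\theta_N\bigr)}{\cos(3\theta_N/2)} - \frac{\cos\bigl((k\pm \tfrac12)\theta_N\bigr)}{\cos(\theta_N/2)}.
\]
Factor out $\frac{\cos((k\pm \tfrac12)\theta_N)}{\cos(3\theta_N/2)}$ and use the triple-angle formula $\cos(3\theta)/\cos(\theta) = 2\cos(2\theta)-1$ from \autoref{eqn:angle_tripling} on each of the two ratios $\cos(3\varphi)/\cos(\varphi)$ that appear. The constants $-1$ cancel, leaving
\[
	\frac{\cos((k\pm \tfrac12)\theta_N)}{\cos(3\theta_N/2)} \cdot 2\bigl(\cos((2k\pm 1)\theta_N) - \cos(\theta_N)\bigr).
\]
Finally, the sum-to-product identity $\cos A - \cos B = -2\sin\tfrac{A+B}{2}\sin\tfrac{A-B}{2}$ turns the bracketed difference into $-2\sin(k\theta_N)\sin((k\pm 1)\theta_N)$ (both the $+$ and $-$ cases give this form after relabeling). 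Combining, one obtains exactly the asserted expression.

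\textbf{Main obstacle.} There is no serious difficulty beyond sign bookkeeping: one must be careful that $sp_i$ comes with a built-in minus sign, and that the two $\pm$ instances in the third formula remain coupled throughout so that the right-hand side genuinely specializes to $\sin((k\pm 1)\theta_N)$ in the claimed way (and not, say, to $\sin((k\mp 1)\theta_N)$). Since the structure mirrors the proof of \autoref{cc_s} already given above, the calculation is short once the dictionary $\frac{cc_i}{s_i} \leftrightarrow \frac{s_i}{cc_i}$ and $\sin \leftrightarrow \cos$ of half-angles is in place.
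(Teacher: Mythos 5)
Your proposal is correct and follows essentially the same route as the paper's proof: the first two identities via the cosine addition formula after clearing the denominator $\cos(3\theta_N/2)$ (resp. $\cos(\theta_N/2)$), and the third by factoring out $\cos((k\pm\tfrac12)\theta_N)/\cos(3\theta_N/2)$, applying the triple-angle ratio $\cos(3\varphi)/\cos(\varphi)=2\cos(2\varphi)-1$, and converting the resulting difference of cosines to a product of sines. The sign bookkeeping you flag ($sp_i$ carrying a built-in minus, and the coupled $\pm$) is handled exactly as in the paper.
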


\begin{proof}
On one hand we have
\begin{align*}
	cp_1 \pm \frac {s_1} {cc_1} sp_1 &= \cos(3k \theta_N) \mp \frac {\sin(3 \theta_N / 2)} {\cos(3 \theta_N / 2)} \sin(3k \theta_N) \\
				       &= \frac {\cos(3(k\pm\frac {1} {2}) \theta_N )} {\cos(3 \theta_N / 2)}
\end{align*}
and similarly for the second equality.

We now consider
\begin{align*}
	\frac {\cos(3(k\pm\frac {1} {2}) \theta_N )} {\cos(3\theta_N / 2)} - \frac {\cos((k\pm\frac {1} {2}) \theta_N )} {\cos(\theta_N /2)}
	& = \frac {\cos((k\pm\frac {1} {2}) \theta_N)} {\cos(3\theta_N/2)} \left(\frac {\cos(3(k\pm\frac {1} {2})\theta_N)} {\cos((k\pm\frac {1} {2})\theta_N)} - \frac {\cos(3\theta_N/2)} {\cos(\theta_N/2)} \right)\\
	& = 2 \cdot \frac {\cos((k\pm\frac {1} {2}) \theta_N)} {\cos(3\theta_N/2)} \left( \cos((2k \pm 1) \theta_N) - \cos(\theta_N) \right)\\
	& = - 4 \cdot \frac {\cos((k\pm\frac {1} {2}) \theta_N)} {\cos(3\theta_N/2)} \cdot \sin\left(k \theta_N\right) \cdot \sin\left((k \pm 1) \theta_N\right)
\end{align*}
\end{proof}
We are now ready to proceed to the analysis of vectors.

\subsection{Computing with \texorpdfstring{$v_2$}{v2}}
	\label{ssec:computing_with_v_2}

The vector $v_2$ is the third column of the matrix $M$:
\[
	v_2 =
	\begin{bmatrix}
		0 \\
		-\frac{cc_1}{s_{1}} \\
		0 \\
		\frac{cc_2}{s_{2}}
	\end{bmatrix}
	\text{ so }
		(-R)^k v_2 =
	\begin{bmatrix}
		\frac{{cc_1 } }{s_{1}} sp_1\\
		- \frac{{cc_1} }{s_{1}} cp_1\\
		-\frac{{cc_2} }{s_{2}} sp_2\\
		\frac{{cc_2 } }{s_{2}} cp_2
	\end{bmatrix}
\]
Applying the matrix $MA$ yields:
\begin{align*}
	MA (-R)^k v_2 =
	\begin{bmatrix}
	L_1\cdot \frac{sp_1}{s_1} - L_2\cdot \frac{sp_2}{s_2}\\
	L_2\cdot cp_1 - L_1\cdot cp_2 + cc_1\cdot \frac{sp_1}{s_1} - cc_2\cdot \frac{sp_2}{s_2}\\
	\frac{sp_1}{s_1} - \frac{sp_2}{s_2}\\
     -cp_1 + cp_2 + cc_1\cdot \frac{sp_1}{s_1} - cc_2\cdot \frac{sp_2}{s_2}\\
	\end{bmatrix}
\end{align*}

\subsubsection{The third entry of $MA (-R)^k v_2$}
	\label{sssec:the_third_entry_of_rkv_2}
The third entry of the vector is:
\[
	\frac{sp_1}{s_1} - \frac{sp_2}{s_2}
	=
	\frac{-\sin(3k\theta_N)}{\sin(3\theta_N)}
	-
	\frac{-\sin(k\theta_N)}{\sin(\theta_N)}
\]
We rewrite this as:
\begin{align*}
	\frac{\sin(k\theta_N)}{\sin(3\theta_N)}
	\left(
	\frac{\sin(3\theta_N)}{\sin(\theta_N)}
	-
	\frac{\sin(3k\theta_N)}{\sin(k\theta_N)}
	\right)
\end{align*}
The first factor is clearly positive for our range of $k$, and we rewrite the difference using the formula for sine of the triple angle (\autoref{eqn:angle_tripling}) as:
\begin{align*}
	\left(
	\frac{\sin(3\theta_N)}{\sin(\theta_N)}
	-
	\frac{\sin(3k\theta_N)}{\sin(k\theta_N)}
	\right)
	=
	2\cos(2\theta_N) - 2\cos(2k\theta_N)\geq 0
\end{align*}
and equality holds precisely for $k=1,N-1$.

\subsubsection{The fourth entry of $MA (-R)^k v_2$}
	\label{sssec:the_fourth_entry_of_v_2}
	By \autoref{cc_s}, the 4th entry of the vector is
\[
	-cp_1 + cp_2 + \frac{cc_1 }{s_1}sp_1 - \frac{cc_2}{s_2}sp_2 =
		4 \cdot \frac {\sin((k+\frac {1} {2}) \theta_N)} {\sin(3\theta_N/2)} \cdot \sin\left(k \theta_N\right) \cdot \sin\left((k + 1) \theta_N\right)
\]
	which is non-negative for $1 \le k \le N-1$.

\subsubsection{The second entry of $MA(-R)^kv_2$}
	\label{sssec:the_second_entry_of_r_kv_2}
We have to show that
\[
	L_2 cp_1 - L_1 cp_2 + cc_1 \frac{sp_1}{s_1} - cc_2 \frac{sp_2}{s_2} \geq 0
\]
Observing the similarity to the fourth entry, we divide the claim into two parts.
For $k\geq \frac{N}{2}$ we claim that the above quantity is clearly greater or equal to the fourth entry, hence non-negative.
For $\frac{N}{2}\geq k$, we will replace $L_2cp_1-L_1cp_2$ by $cp_1-cp_2$, as detailed in \autoref{sssec:the_case_n_2_geq_k_geq_1} below, and proceed as with the fourth entry.

\subsubsection{The case $N> k \geq N/2$}
	\label{sssec:the_case_ngeq_k_geq_n_2_second_entry_v2}
To check that the second entry is greater than the fourth entry reduces to the inequality
\[
	(L_2+1)cp_1 - (L_1+1)cp_2 \geq 0
\]
We now rewrite this, taking into account the identities for $L_i$ from \autoref{eqn:L_i_identities}:
\[
	(L_2+1)cp_1 - (L_1+1)cp_2 =
	(L_2+1)cc_1 \frac{cp_1}{cc_1} - (L_1+1)cc_2\frac{cp_2}{cc_2} =
	3\left(L_1+L_2\right)
	\left(
	\frac{cp_1}{cc_1} - \frac{cp_2}{cc_2}
	\right)
\]
The factor $3(L_1+L_2)$ is positive, so we can drop it.
We can also factor out $\frac{-cp_2}{cc_1}$, taking into account that $cp_2=\cos(k\theta_N)\leq 0$ when $N\geq k\geq \tfrac{N}{2}$, to reduce to showing:
\begin{align}
	\label{eqn:cc_i_ratio_difference_c_i}
	\frac{cc_1}{cc_2}-\frac{cp_1}{cp_2} \geq 0
\end{align}
Now we rewrite everything in terms of $\theta_N$ and use the angle-tripling formula for cosines to express the terms as:
\[
	\frac{cc_1}{cc_2}=\frac{1+\cos(3\theta_N)}{1+\cos(\theta_N)}
	\quad
	\frac{cp_1}{cp_2} = 2\cos(2k\theta_N)-1
\]
We next reduce the expressions:
\begin{multline*}
	\frac{1+\cos(3\theta_N)}{1+\cos(\theta_N)} \geq 2\cos(2k\theta_N)-1 \iff \\
	\iff 1+\cos(3\theta_N) \geq 2\cos(2k\theta_N)-1 + 2\cos(\theta_N)\cos(2k\theta_N) - \cos(\theta_N)\\
	\iff 1 + \frac{\cos(3\theta_N)+\cos(\theta_N)}{2} \geq
	\cos(2k\theta_N) + \cos(\theta_N)\cos(2k\theta_N)
\end{multline*}
The left-hand side above is independent of $k$ while the right-hand side is monotonically increasing and achieves its maximum when $k=N-1$ to reduce to
\[
	1 + \frac{\cos(3\theta_N)+\cos(\theta_N)}{2} \geq
	\cos(2\theta_N) + \cos(\theta_N)\cos(2\theta_N)
\]
Finally we use again the angle-tripling formula to find $\cos(3\theta_N)+\cos(\theta_N)=2\cos(\theta_N)\cos(2\theta_N)$ and reduce to
\[
	1 + \cancel{\cos(\theta_N)\cos(2\theta_N)} \geq
	\cos(2\theta_N) + \cancel{\cos(\theta_N)\cos(2\theta_N)}
\]
which clearly holds.

\subsubsection{The case $N/2 \geq k \geq 1$}
	\label{sssec:the_case_n_2_geq_k_geq_1}
In this case we claim that we have
\begin{align*}
	L_2 cp_1 - L_1 cp_2 \geq cp_1 - cp_2.
\end{align*}
Indeed, it is obvious in the case $k=N/2$ and otherwise it is equivalent to
\begin{align*}
	(1-L_1) cp_2 \geq (1-L_2) cp_1\\
	\iff \frac {1-L_1} {1-L_2} \ge \frac {cp_1} {cp_2} = 2 \cos(2k\theta_N) - 1
\end{align*}
which is true since $L_1 < L_2$ implies that
\begin{align*}
	\frac {1-L_1} {1-L_2} > 1 \ge 2 \cos(2k\theta_N) - 1.
\end{align*}

We now proceed as in the analysis of the fourth entry but this time subtracting $cp_2-cp_1$, i.e.
\begin{align*}
	L_2 cp_1 - L_1 cp_2 + cc_1 \frac{sp_1}{s_1} - cc_2 \frac{sp_2}{s_2} \geq
	cp_1 -cp_2 + cc_1 \frac{sp_1}{s_1} - cc_2 \frac{sp_2}{s_2}.
\end{align*}
This last expression can be rewritten using \autoref{cc_s}
	\[
		4 \cdot \frac {\sin((k-\frac {1} {2}) \theta_N)} {\sin(3\theta_N/2)} \cdot \sin\left(k \theta_N\right) \cdot \sin\left((k - 1) \theta_N\right).
	\]
	Which is positive for  $1 \le k \le N-1$.

\subsubsection{The first entry of $MA (-R)^k  v_2$}
	\label{sssec:the_first_entry_of_mtestr_kv_2}
We have to show that the expression
\[
L_1\cdot \frac{sp_1}{s_1} - L_2\cdot \frac{sp_2}{s_2}
\]
is non-negative.
We factor out the term $-sp_2/s_1$ and it suffices to show that the resulting expression
\[
	L_2 \frac{s_1}{s_2} - L_1\cdot \frac{sp_1}{sp_2}
\]
is positive since $s_1>0$ and $-sp_2=\sin(k\theta_N)>0$.
Both ratios $sp_1/sp_2$ and $s_1/s_2$ are ratios of a triple sine over a sine, so we rewrite them using \autoref{eqn:angle_tripling} to find :
\[
	L_2(3 - 4s_2^2) - L_1(3 - 4sp_2^2)
\]
We rewrite this, using a ``polarization'' identity:
\begin{align}
	\label{sssec:axby_identity}
	Ax - By = \frac{A-B}{2}(x+y) + \frac{A+B}{2}(x-y).
\end{align}
After multiplying by $2$, we get
\[
	(L_2 - L_1)\left[6 - 4(s_2^2+sp_2^2)\right] + (L_2+L_1)\left[
	 sp_2^2 - s_2^2
	\right].
\]
Now $L_2-L_1>0$ and $s_2^2=\sin^2(\theta_N)\leq \frac{1}{2}$ since $N\geq 3$, so the first term is clearly positive.
For the second one we observe that $sp_2^2\geq s_2^2$ since this is saying that $\sin(k\theta_N)\geq \sin(\theta_N)$ for $k=1,\ldots N-1$.



\subsection{Computing with \texorpdfstring{$v_0$}{v0}}
	\label{ssec:computing_with_v_0}

\subsubsection{Setup}
	\label{sssec:setup_computing_with_v_0}
Recall that
\[
	v_0 =
	\begin{bmatrix}
	         0    \\
	-L_1 \frac{cc_1}{s_1} \\
	         0   \\
	 L_2 \frac{cc_2}{s_2} \\
	\end{bmatrix}
	\text{ so }
	(-R)^k v_0 =
	\begin{bmatrix}
	 L_1 \frac{cc_1}{s_1} sp_1\\
	-L_1 \frac{cc_1}{s_1} cp_1\\
	-L_2 \frac{cc_2}{s_2} sp_2\\
	 L_2 \frac{cc_2}{s_2} cp_2
	\end{bmatrix}.
\]
So the vector that we have to analyze, namely $MA(-R)^k v_0$ is:
\begin{align}
	\label{eqn:v_0_positivity}
	\begin{bmatrix}
	L_1^2\cdot \frac{sp_1}{s_1} - L_2^2\cdot \frac{sp_2}{s_2}\\
	L_1 L_2\cdot cp_1 - L_1 L_2\cdot cp_2 +
	L_1\cdot cc_1\cdot \frac{sp_1}{s_1} - L_2\cdot cc_2\cdot \frac{sp_2}{s_2}\\
    L_1\cdot  \frac{sp_1}{s_1} - L_2\cdot  \frac{sp_2}{s_2}\\
    -L_1\cdot cp_1 + L_2\cdot cp_2 +
    L_1\cdot cc_1\cdot \frac{sp_1}{s_1} - L_2\cdot cc_2\cdot \frac{sp_2}{s_2}
	\end{bmatrix}
\end{align}

\subsubsection{The third entry of $MA (-R)^k v_0$}
	\label{sssec:the_third_entry_of_rkv_0}
Notice that the third entry is the same as the first entry of $v_2$, which we already checked is positive in \autoref{sssec:the_first_entry_of_mtestr_kv_2}.

\subsubsection{The first entry of $MA (-R)^k  v_0$}
\label{sssec:the_first_entry_of_mtestr_kv_0}
The first entry is similar to the one of $v_2$ dealt with in \autoref{sssec:the_first_entry_of_mtestr_kv_2}, we use the same method to show positivity.

First factor out the positive term $-sp_2 / s_1$ and use the angle tripling formula \autoref{eqn:angle_tripling} to get
\[
	L_2^2(3 - 4s_2^2) - L_1^2(3 - 4sp_2^2)
\]
Using the polarization identity \autoref{sssec:axby_identity} after multiplying by $2$, we get
\[
	(L_2^2 - L_1^2)\left[6 - 4(s_2^2+sp_2^2)\right] + (L_2^2+L_1^2)\left[
	 sp_2^2 - s_2^2.
	\right]
\]
Now $L_2^2-L_1^2>0$ and $s_2^2=\sin^2(\theta_N)\leq \frac{1}{2}$ since $N\geq 3$, so the first term is positive.
For the second one use again that $sp_2^2\geq s_2^2$ for $k=1,\ldots N-1$.

\subsubsection{The fourth entry of $MA (-R)^k v_0$}
\label{sssec:the_fourth_entry_of_v_0}
Consider first the terms with a factor of $L_1$:
\begin{align*}
	- \left(cp_1  - cc_1 \frac {sp_1} {s_1}\right) &= - \left(\cos(3k\theta_N) + \frac {\cos(3 \theta_N / 2)} {\sin(3 \theta_N / 2)} \sin(3k\theta_N)\right)\\
						       & = - \frac {\sin\left(3(k+\frac{1}{2})\theta_N\right)} {\sin(3 \theta_N / 2)}.
\end{align*}
Similarly consider those with a factor of $L_2$:
\[
	cp_2 - cc_2 \frac {sp_2} {s_2}= \frac {\sin\left((k+\frac{1}{2})\theta_N\right)} {\sin(\theta_N / 2)}.
\]

The fourth entry has thus the following expression
\[
	L_2 \frac {\sin\left((k+\frac{1}{2})\theta_N\right)} {\sin(\theta_N / 2)}
	- L_1 \frac {\sin\left(3(k+\frac{1}{2})\theta_N\right)} {\sin(3\theta_N / 2)}
\]
We factor out the positive term $\sin\left((k+\frac{1}{2})\theta_N\right) / \sin(3 \theta_N / 2)$,
\begin{align*}
	L_2 \frac {\sin\left(3\theta_N / 2\right)} {\sin(\theta_N / 2)}
	- L_1 \frac {\sin\left(3(k+\frac{1}{2})\theta_N\right)} {\sin\left((k+\frac{1}{2})\theta_N\right)} -
	&= L_2 \left(2 \cos(\theta_N) + 1\right) - L_1 \left(2 \cos\left((2k+1) \theta_N\right) + 1\right)\\
	&= (L_2 - L_1) + 2 \left( L_2 \cos(\theta_N) - L_1 \cos\left((2k+1) \theta_N\right)\right).
\end{align*}
As $L_2 > L_1 > 0$ and $\cos(\theta_N) \ge \cos((2k+1) \theta_N)$ for all $k$ this expression is positive.

\subsubsection{The second entry of $MA(-R)^kv_0$}
	\label{sssec:the_second_entry_of_r_kv_0}
Observing the similarity to the fourth entry, we divide the claim into two parts.
For $k\geq \frac{N}{2}$ we claim that the above quantity is greater than or equal to the fourth entry, hence non-negative.
For $\frac{N}{2}\geq k$, we use relations with $L_1, L_2$.

\subsubsection{The case $N> k \geq N/2$}
	\label{sssec:the_case_ngeq_k_geq_n_2_second_entry_v0}
To check that the second entry is greater than the fourth entry reduces to the inequality
\[
	L_1(L_2+1)cp_1 - L_2(L_1+1)cp_2 \geq 0
\]
Using identities of $L_i$ from \autoref{eqn:L_i_identities}:
\begin{align*}
	L_1(L_2+1)cp_1 - L_2(L_1+1)cp_2 &= (L_2+1)cc_1 L_1 \frac{cp_1}{cc_1} - (L_1+1)cc_2 L_2\frac{cp_2}{cc_2} \\
					&= 3\left(L_1+L_2\right) \left( L_1 \frac{cp_1}{cc_1} - L_2 \frac{cp_2}{cc_2} \right).
\end{align*}
The factors $3(L_1+L_2)$ and $\frac{-cp_2}{cc_1}$ are positive when $N\geq k\geq \tfrac{N}{2}$, then we are reduced to showing:
\[
	L_2 \frac{cc_1}{cc_2} - L_1\frac{cp_1}{cp_2} \geq 0
\]
As $L_2 > L_1 > 0$ this is implied by the inequality
\[
	\frac{cc_1}{cc_2}-\frac{cp_1}{cp_2} \geq 0
\]
proved earlier, see \autoref{eqn:cc_i_ratio_difference_c_i}.

\subsubsection{The case $N/2 \geq k \geq 1$}
	\label{sssec:the_case_k_ngeq_n_2_second_entry_v0}
Notice that $L_2 > L_1$, $1-L_1 > 1-L_2$ always, and $cp_2 \ge cp_1$ in this range since $cp_2-cp_1 = 2 \sin(k\theta_N) \sin(2k \theta_N)$.
Multiplying these three inequalities, we get
$$L_2 (1-L_1) cp_2 \ge L_1 (1-L_2) cp_1$$
then
\begin{align*}
	L_1 L_2 cp_1 - L_1 L_2 cp_2 \geq L_1 cp_1 - L_2 cp_2.
\end{align*}
Using this inequality we reduce the positivity of the second entry in this range to the positivity of
\begin{align}
	\label{v_0_second_entry_case_2_term}
	L_1 (cp_1 + cc_1 \frac {sp_1} {s_1}) - L_2 (cp_2 + cc_2 \frac {sp_2} {s_2}).
\end{align}
Using \autoref{cc_s}, we can rewrite this expression as:
\begin{align*}
	L_2 \frac {\sin\left((k-\frac{1}{2})\theta_N\right)} {\sin\left(\theta_N / 2\right)} - L_1 \frac {\sin\left(3(k - \frac {1} {2})\theta_N / 2\right)} {\sin(3 \theta_N / 2)}.
\end{align*}
Factoring out the positive term $\sin((k-\frac {1} {2}) \theta_N) / \sin(3\theta_N / 2)$,
\begin{align*}
	L_2 \frac {\sin\left(3\theta_N / 2\right)} {\sin(\theta_N / 2)} - L_1 \frac {\sin\left(3(k-\frac{1}{2})\theta_N\right)} {\sin\left((k-\frac{1}{2})\theta_N\right)}
	&= L_2 \left(2 \cos(\theta_N) + 1\right) - L_1 \left(2 \cos\left((2k-1) \theta_N\right) + 1\right)\\
	&= (L_2 - L_1) + 2 \left( L_2 \cos(\theta_N) - L_1 \cos\left((2k-1) \theta_N\right)\right).
\end{align*}
As $L_2 > L_1 > 0$ and $\cos(\theta_N) \ge \cos((2k-1) \theta_N)$ for all $k$ this expression is positive.



\subsection{Computing with \texorpdfstring{$v_3$}{v3}}
	\label{ssec:computing_with_v_3}

\subsubsection{Setup}
	\label{sssec:setup_computing_with_v_3}
We have that
\[
	v_3 =
	\begin{bmatrix}
	      cc_1\\
	-\frac{cc_1^2}{s_1}\\
	     -cc_2\\
	\frac{cc_2^2}{s_2}
	\end{bmatrix}
	\text{ so }
	(-R)^k v_3 =
	\begin{bmatrix}
	 cc_1\cdot  cp_1 + \frac{cc_1^2}{s_1}\cdot sp_1\\
	-\frac{cc_1^2}{s_1}\cdot  cp_1 + cc_1\cdot sp_1\\
	-cc_2\cdot  cp_2 - \frac{cc_2^2}{s_2}\cdot sp_2\\
	 \frac{cc_2^2}{s_2}\cdot  cp_2 - cc_2\cdot sp_2
	\end{bmatrix}
\]
Using the identity $$cc_i^2 + s_i^2 = 2 cc_i$$ for $i \in \{0,1\}$ and identity (\ref{eqn:L_i_cc_i_identity}), we then proceed to compute $MA(-R)^k v_3$ to be:
\[
 \begin{bmatrix}
	L_{1} \cdot {cc_1} \cdot \frac{{sp_1}}{s_1} - L_{2} \cdot {cc_2} \cdot \frac{{sp_2}}{s_2} + L_{1} {cp_1} - L_{2} {cp_2}\\
3 \, \left(L_{1} + L_{2}\right) {\left({cp_1} - {cp_2}\right)} - \left(L_{2} + 1\right) {s_1} sp_1 + \left(L_{1} + 1\right){s_2} {sp_2} + 2 \, \left({cc_1} \cdot \frac{{sp_1}}{s_1} - {cc_2} \cdot \frac{{sp_2}}{s_2}\right)\\
{cp_1} - {cp_2} + {cc_1} \cdot \frac{{sp_1}}{s_1} - {cc_2} \cdot \frac{{sp_2}}{{s_2}}\\
2 \, \left({cc_1} \cdot \frac{{sp_1}}{s_1} - {cc_2} \cdot \frac{{sp_2}}{s_2}\right)
 \end{bmatrix}
\]

\subsubsection{The third entry of $MA (-R)^k v_3$}
	\label{sssec:the_third_entry_of_rkv_3}
The third entry
\[
	cp_1-cp_2+cc_1 \frac {sp_1} {s_1} - cc_2 \frac {sp_2} {s_2}
\]
was already proved to be positive in \autoref{sssec:the_case_n_2_geq_k_geq_1}, using \autoref{cc_s}.

\subsubsection{The first entry of $MA (-R)^k  v_3$}
	\label{sssec:the_first_entry_of_mtestr_kv_3}
	The first entry of $v_3$ is the same as \autoref{v_0_second_entry_case_2_term} proved to be positive in \autoref{sssec:the_case_k_ngeq_n_2_second_entry_v0}.

\subsubsection{The fourth entry of $MA (-R)^k v_3$}
	\label{sssec:the_fourth_entry_of_v_3}
	Factoring out 2, we recognize the last two terms of the fourth entry of $v_2$.
	As in \autoref{sssec:the_fourth_entry_of_v_2} we use the triple-angle formula for sines:
\begin{align*}
	\frac{cc_1}{s_1}sp_1 - \frac{cc_2}{s_2}sp_2
	&=
	(-sp_2)
	\left(\frac{cc_2}{s_2} - \frac{cc_1}{s_1} \frac{sp_1}{sp_2}
	\right)\\
	&=
	(-sp_2)
	\left(\frac{cc_2}{s_2} - \frac{cc_1}{s_1} - 2 \frac{cc_1}{s_1} \cos(2k \theta_N)
	\right)
	\\
	\intertext{now using \autoref{sssec:a_frequently_occurring_difference_of_ratios},}
	& = 2(-sp_2)
	\left(\frac{\cos(\theta_N/2)}{\sin(3\theta_N/2)}
	- \frac{\cos(3\theta_N/2)}{\sin(3\theta_N/2)}\cos(2k\theta_N)
	\right)\\
	& = {2\sin(k\theta_N)}\frac{\cos(\theta_N/2)}{\sin(3\theta_N/2)}
	\big(1
		- \frac {\cos(3\theta_N/2)} {\cos(\theta_N/2)} \cos(2k\theta_N)
	\big)\\
	& = {2\sin(k\theta_N)}\frac{\cos(\theta_N/2)}{\sin(3\theta_N/2)}
	\big(1
		- \left(2 \cos(\theta_N) - 1\right)  \cos(2k\theta_N)
	\big)\\
\end{align*}

For all $1 \le k < N$, ${2\sin(k\theta_N)}\frac{\cos(\theta_N/2)}{\sin(3\theta_N/2)} > 0$.
Moreover, as $N > 3$, we have $0 < 2 \cos(\theta_N) -1 < 1$ hence $1 - \left(2 \cos(\theta_N) - 1\right)  \cos(2k\theta_N) > 0$.

\subsubsection{The second entry of $MA(-R)^kv_3$}
	\label{sssec:the_second_entry_of_r_kv_3}
As for the second entry of the previous vectors we divide the claim into two parts.
For $k\geq \frac{N}{2}$ we claim that the above quantity is greater than or equal to the fourth entry, hence non-negative.
For $\frac{N}{2}\geq k$ we use other relations.

\subsubsection{The case $N> k > N/2$}
	\label{sssec:the_case_ngeq_k_geq_n_2_second_entry_v3}

We show in this case that
\[
	3(L_1+L_2)(cp_1-cp_2) - (L_2+1)s_1 sp_1 + (L_1+1)s_2 sp_2 \ge 0.
\]

Using \autoref{eqn:L_i_identities}, we can factor out $3(L_1+L_2)$ to get an equivalent inequality
\[
	cp_1-cp_2 - \frac {s_1}{cc_1} sp_1 + \frac {s_2}{cc_2} sp_2 \ge 0.
\]
By \autoref{s_cc}, this latter expression is equal to
\[
	- 4 \cdot \frac {\cos((k-\frac {1} {2}) \theta_N)} {\cos(3\theta_N/2)} \cdot \sin\left(k \theta_N\right) \cdot \sin\left((k - 1) \theta_N\right).
\]
As $k > \frac{N}{2}$, $\cos((k-\frac{1}{2})\theta_N) < 0$ and moreover $k<N$ then $\sin(k \theta_N)$ and $\sin((k-1)\theta_N)$ are non-negative, thus the inequality is satisfied.

\subsubsection{The case $1 \le k \le N/2$}
	\label{sssec:the_case_k_ngeq_n_2_second_entry_v3}

	First we use the fact $3(L_1+L_2) \le 3$ established in \autoref{sum_L}.
	Moreover for $1 \le k \le N / 2$, ${\cos((k-\frac {1} {2}) \theta_N)} > 0$ thus according to the previous formula
	\[
		cp_1-cp_2 - \frac {s_1}{cc_1} sp_1 + \frac {s_2}{cc_2} sp_2 \le 0.
	\]
	Hence the second entry in this case is not smaller than
	\[
		3 \cdot \left(cp_1-cp_2 - \frac {s_1}{cc_1} sp_1 + \frac {s_2}{cc_2} sp_2\right) +  2 \cdot \left({cc_1} \cdot \frac{{sp_1}}{s_1} - {cc_2} \cdot \frac{{sp_2}}{s_2}\right).
	\]
	Using the trigonometric computations of \autoref{s_cc} and \autoref{eqn:ratio_sine_power_half_angle_difference} we can rewrite this expression as
	\begin{align*}
		 - 12 \cdot \frac {\cos((k-\frac {1} {2}) \theta_N)} {\cos(3\theta_N/2)} \cdot \sin\left(k \theta_N\right) \cdot \sin\left((k-1) \theta_N\right)\\
		 + 4 \sin(k\theta_N) \left(\frac{\cos(\theta_N/2)}{\sin(3\theta_N/2)} - \frac {\cos(3\theta_N/2)} {\sin(3\theta_N/2)} \cos(2k\theta_N) \right).
	\end{align*}
	Factoring out the positive term $4 \cdot \frac{\sin(k\theta_N)} {\cos(3 \theta / 2) \cdot \sin(3 \theta / 2)}$ we are reduced to show positivity for
	\begin{align*}
		- 3 \sin(3 \theta_N /2) \cdot {\cos((k-\frac {1} {2}) \theta_N)} \cdot \sin\left((k-1) \theta_N\right)\\
		+ {\cos(3\theta_N/2)} \cdot \big(\cos(\theta_N/2) - {\cos(3\theta_N/2)} \cos(2k\theta_N) \big) \\
		= - \frac 32 \sin(3 \theta_N /2) \cdot \left(\sin((2k-\frac {3} {2}) \theta_N) - \sin\left(\theta_N/2\right)\right)\\
		+ {\cos(3\theta_N/2)} \cdot \big(\cos(\theta_N/2) - {\cos(3\theta_N/2)} \cos(2k\theta_N) \big)
	\end{align*}
	Hence we are reduced to showing that
	\begin{align*}
		\frac 32 \sin(3 \theta_N /2) \cdot \sin((2k-\frac {3} {2}) \theta_N) + \cos(3\theta_N/2)^2 \cos(2k\theta_N)\\
		\le \frac 32 \sin(3 \theta_N /2) \cdot \sin(\theta_N / 2) + \cos(3\theta_N/2) \cos(\theta_N / 2).
	\end{align*}
	In the following we show that the expression
	\[
		\frac 32 \sin(3 \theta_N /2) \cdot \sin((2k-\frac {3} {2}) \theta_N) + \cos(3\theta_N/2)^2 \cos(2k\theta_N)
	\]
	takes its maximal value at $k=1$.
	\begin{proposition}
		\label{extremal}
		Let $a,b,\epsilon$ be three positive parameters.
		Then there exists a unique $x_M \in [0, \pi)$ such that the map
		$$f(x) = a \sin(x-\epsilon) + b \cos(x)$$
		is extremal at $x_M$.
		Moreover $\tan(x_M) = \frac {a \cos(\epsilon)} {b-a \sin(\epsilon)}$ and $f(x_M-x) = f(x)$.
	\end{proposition}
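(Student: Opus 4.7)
The plan is to rewrite $f$ as a single shifted cosine and then read off all three assertions directly.

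First, I would expand $\sin(x-\epsilon)$ via the subtraction formula from \autoref{eqn:sine_cosine_addition_subtraction} to obtain
\[
	f(x) \;=\; (a\cos\epsilon)\sin x \;+\; (b - a\sin\epsilon)\cos x \;=\; \alpha \sin x + \beta \cos x,
\]
where I abbreviate $\alpha := a\cos\epsilon$ and $\beta := b - a\sin\epsilon$. The coefficient $\alpha$ is nonzero (since $a,b,\epsilon>0$ and one can dispatch separately the degenerate cases $\epsilon \in \tfrac{\pi}{2}\bZ$ if needed), so the vector $(\alpha,\beta)$ is nonzero and there is a unique $x_M \in [0,\pi)$ with
\[
	\cos x_M = \tfrac{\beta}{R},\qquad \sin x_M = \tfrac{\alpha}{R},\qquad R := \sqrt{\alpha^2 + \beta^2}>0.
\]
This forces $\tan x_M = \alpha/\beta = a\cos\epsilon/(b-a\sin\epsilon)$, which is precisely the asserted formula.

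Next I would use this phase to write
\[
	f(x) \;=\; R\bigl(\sin x_M \sin x + \cos x_M \cos x\bigr) \;=\; R\cos(x - x_M),
\]
so that $f'(x) = -R\sin(x - x_M)$ vanishes on $[0,\pi)$ at the unique point $x = x_M$, giving existence and uniqueness of the extremum in $[0,\pi)$. The symmetry statement is then immediate from the evenness of cosine: $f(2x_M - x) = R\cos(x_M - x) = R\cos(x - x_M) = f(x)$, which is the reflection of $f$ about the line $x = x_M$ (I read the statement in the proposition as this standard symmetry about the extremum $x_M$).

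There is no real obstacle; the only point that needs some care is the interval convention for $x_M$, i.e., verifying that the pair $(\alpha/R,\beta/R)$ on the unit circle corresponds to a unique angle in $[0,\pi)$ rather than in $[0,2\pi)$. This is fine because a sinusoid $R\cos(x-\phi)$ has its two consecutive critical points (one maximum, one minimum) separated by exactly $\pi$, so on any half-open interval of length $\pi$ there is exactly one critical point; the sign of $\beta$ (equivalently, whether $x_M$ lies in $(0,\pi/2)$ or $(\pi/2,\pi)$) determines whether this critical point is a maximum or a minimum but does not affect the formula for $\tan x_M$ or the symmetry.
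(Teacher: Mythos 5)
Your proof is correct and takes essentially the same route as the paper: the paper solves $f'(x)=0$ using the subtraction formula to obtain the tangent identity and then invokes the amplitude--phase form $f(x)=A\cos(x-D)$ for the symmetry, which is exactly your single-sinusoid rewriting carried out in a slightly different order. You were also right to read the stated symmetry $f(x_M-x)=f(x)$ as the intended $f(2x_M-x)=f(x)$ (reflection about the extremum $x_M$), since that is what the representation $f(x)=R\cos(x-x_M)$ actually gives and what the paper's proof establishes.
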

	\begin{proof}
		We compute the derivative
		$$f'(x) = a \cos(x-\epsilon) - b \sin(x).$$
		We look for zeros of this map,
		\begin{align*}
			f'(x) = 0 \iff a \cos(x-\epsilon) = b \sin(x) \\
			\iff a \cos(x) \cos(\epsilon) = (b - a \sin(\epsilon)) \sin(x)\\
			\iff \tan(x) = \frac {a \cos(\epsilon)} {b-a \sin(\epsilon)}.
		\end{align*}
		The symmetry property comes from the fact that for all $a,b,\epsilon$ there exists $A, D$ such that $f(x) = A \cos(x-D)$.
	\end{proof}

	We apply the proposition in the setting where $a=\frac{3}{2} \sin(3 \theta_N / 2)$, $b = \cos(3 \theta_N / 2)^2$, $\epsilon = 3 \theta_N /2$.
	Then
	\begin{align*}
		\frac {a \cos(\epsilon)} {b-a \sin(\epsilon)} &=   \frac {3 \sin(\epsilon) \cos(\epsilon)} {2 \cos(\epsilon)^2 -3 \sin(\epsilon)^2}.
	\end{align*}
	Notice that $\tan(2\epsilon) = \frac {2\sin(\epsilon)\cos(\epsilon)} {\cos(\epsilon)^2-\sin^2(\epsilon)}$.
	Hence
	\begin{align*}
		\tan(x_M) \le \tan(2 \epsilon) \iff \frac {3 \sin(\epsilon) \cos(\epsilon)} {2 \cos(\epsilon)^2 -3 \sin(\epsilon)^2} \le \frac {2\sin(\epsilon)\cos(\epsilon)} {\cos(\epsilon)^2-\sin^2(\epsilon)} \\
		\iff \frac {3} {2 \cos(\epsilon)^2 -3 \sin(\epsilon)^2} \le \frac {2} {\cos(\epsilon)^2-\sin^2(\epsilon)} \\
		\iff \frac {2 \cos(\epsilon)^2 -3 \sin(\epsilon)^2}{3}  \ge \frac {\cos(\epsilon)^2-\sin^2(\epsilon)}{2}  \\
		\iff \frac {2 \cos(\epsilon)^2} {6}  \ge \frac {\sin^2(\epsilon)}{2}  \iff \tan(\epsilon)^2 \le \frac{2}{3}\\
	\end{align*}
	which is true for $N \ge 7$.
	Hence for any $N > 6$, $x_M < 3 \theta_N$.
	Notice moreover that $f(0) = \frac{3}{2} \sin(\epsilon)^2 + \cos(\epsilon)^2 \ge 0$ hence the map is maximal at $x_M$.
	As we are only considering even multiples $2k \theta_N$, by the symmetry property, the map
	\[
	\frac {3} {2} \sin(3\theta_N /2) \sin((2k-\frac {3} {2})\theta_N) + \cos(3 \theta_N /2)^2 \cos(2k\theta_N)
	\]
	is maximal for $k=1$ with value
	\begin{align*}
		\frac {3} {2} \sin(3\theta_N /2) \sin(\theta_N / 2) + \cos(3 \theta_N /2)^2 \cos(2\theta_N)\\
		\le \frac {3} {2} \sin(3\theta_N /2) \sin(\theta_N / 2) + \cos(3 \theta_N /2) \cos(\theta_N / 2).
	\end{align*}
	For the cases $N= 4,5,6$ one only has to check the inequality for $k=2$.
	This can be done directly.



\subsection{Computing with \texorpdfstring{$v_1$}{v1}}
	\label{ssec:computing_with_v_1}

\subsubsection{Setup}
	\label{sssec:setup_computing_with_v_1}
Recall that the vector $v_1$ is the second one in the cone, so we have
\[
	v_1 =
	\begin{bmatrix}
	  -L_2 cc_1\\
	-\frac{cc_1^2}{s_1}\\
	   L_1cc_2\\
	 \frac{cc_2^2}{c_2}
	\end{bmatrix}
	\text{ so }
	(-R)^k v_1 =
	\begin{bmatrix}
	-L_2\cdot cc_1\cdot cp_1 + \frac{cc_1^2}{s_1}\cdot sp_1\\
	-L_2\cdot cc_1\cdot sp_1 - \frac{cc_1^2}{s_1}\cdot cp_1\\
	 L_1\cdot cc_2\cdot cp_2 - \frac{cc_2^2}{c_2}\cdot sp_2\\
	 L_1\cdot cc_2\cdot sp_2 + \frac{cc_2^2}{c_2}\cdot cp_2
	\end{bmatrix}
\]
Computing now $MA(-R)^k v_1$ we find:
\[
	\begin{bmatrix}
			L_1 L_{2} (cp_2 - {cp_1}) + L_1 cc_1 \frac {sp_1} {s_1} - L_2 cc_2 \frac {sp_2} {s_2} \\
			L_2^2 sp_1 s_1 - L_1^2 sp_2 s_2 + cc_1^2 \frac {sp_1} {s_1} - cc_2^2 \frac {sp_2} {s_2} \\
			L_1 cp_2 - L_2 cp_1 + cc_1 \frac{sp_1}{s_1} - cc_2 \frac{sp_2}{s_2} \\
			-(L_2+1) cc_1 cp_1 + (L_1+1) cc_2 cp_2 - L_2 sp_1 s_1 + L_1 sp_2 s_2 + cc_1^2 \frac {sp_1} {s_1} - cc_2^2 \frac {sp_2} {s_2}
	\end{bmatrix}
\]

\subsubsection{The first entry of $MA(-R)^kv_1$}
	Notice that it is similar to the second entry for $v_0$.
	As in \autoref{sssec:the_second_entry_of_r_kv_0} we split the proof into two cases.
	The proof is very similar but exchanges the two cases.

	\subsubsection{The case $N/2 \le k < N$}
	Notice that as $L_2 > L_1$, $1-L_1 > 1-L_2$ and $-cp_2 \ge -cp_1$ on this range since $cp_2-cp_1 = 2 \sin(k\theta_N) \sin(2k \theta_N) \le 0$.
	Multiplying these three inequalities, we get
	$$-L_2 (1-L_1) cp_2 \ge -L_1 (1-L_2) cp_1$$
	then
	\begin{align*}
		L_1 L_2 cp_2 - L_1 L_2 cp_1 \geq L_2 cp_2 - L_1 cp_1.
	\end{align*}
	Hence the entry is not larger than
	\[
		L_1 \left(cc_1 \frac {sp_1} {s_1} - cp_1\right) - L_2 \left(cc_2 \frac {sp_2} {s_2} - cp_2\right)
	\]
	which is exactly the fourth coordinate of $v_0$, proved to be positive in \autoref{sssec:the_fourth_entry_of_v_0}.

	\subsubsection{The case $1 \le k \le N/2$}
	We compare the first entry with
	$$
		L_1 (cp_1 + cc_1 \frac {sp_1} {s_1}) - L_2 (cp_2 + cc_2 \frac {sp_2} {s_2}).
	$$
	which was proved to be positive in \autoref{sssec:the_case_k_ngeq_n_2_second_entry_v0}.
	Subtracting the above expression from the entry of interest, it suffices to show that
	\[
		L_2(L_1+1)cp_2 - L_1(L_2+1)cp_1 \geq 0.
	\]
	Using identities for $L_i$ from \autoref{eqn:L_i_identities} and factoring out $3(L_1+L_2)$ we reduce to
	\begin{align*}
		L_2 \frac{cp_2}{cc_2} - L_1 \frac{cp_1}{cc_1} \geq 0.
	\end{align*}
	The factor $\frac{cp_2}{cc_1}$ is positive in this range of $k$, so we reduced to showing:
	\[
	L_2 \frac{cc_1}{cc_2} - L_1\frac{cp_1}{cp_2} \geq 0
	\]
	This was done in \autoref{sssec:the_case_ngeq_k_geq_n_2_second_entry_v0}.

\subsubsection{The third entry of $MA(-R)^kv_1$}
	\label{sssec:the_third_entry_of_rkv_1}
	This entry is equal to
	\begin{align*}
		L_1 cp_2 &- L_2 cp_1 + cc_1 \frac{sp_1}{s_1} -cc_2 \frac{sp_2}{s_2} =\\
		&= (L_1+1) cp_2 - (L_2+1) cp_1 + cp_1-cp_2 + cc_1 \frac{sp_1}{s_1} - cc_2 \frac{sp_2}{s_2}\\
		&= 3(L_1+L_2) \left(\frac{cp_2}{cc_2} - \frac{cp_1}{cc_1}\right) + \left(cp_1+cc_1 \frac{sp_1}{s_1}\right) - \left(cp_2+ cc_2 \frac{sp_2}{s_2}\right)\\
		\intertext{Using the formulas from \autoref{sssec:the_case_k_ngeq_n_2_second_entry_v0} we get}
		&= 3(L_1+L_2) \frac{cp_2}{cc_1} \left(\frac{cc_1}{cc_2} - \frac{cp_1}{cp_2}\right) - \frac {\sin\left(3(k-\frac{1}{2})\theta_N\right)} {\sin(3 \theta_N / 2)} + \frac {\sin\left((k-\frac{1}{2})\theta_N\right)} {\sin(\theta_N / 2)} \\
		\intertext{with the angle tripling formula}
		&= 6(L_1+L_2) \frac{cp_2}{cc_1} \left(\cos(2\theta_N) - \cos(2k\theta_N)\right) \\
		& \quad + 2 \cdot \frac {\sin\left((k-\frac{1}{2})\theta_N\right)} {\sin(3 \theta_N / 2)} \left(\cos(\theta_N) - \cos\left((2k-1)\theta_N\right)\right).
	\end{align*}
	Using the difference of cosines formula, and factoring out 4, we get
	\begin{align*}
		3(L_1+L_2) \frac{cp_2}{cc_1} \sin\left((k+1)\theta_N\right) \sin\left((k-1)\theta_N\right) \\
		+ \frac {\sin\left((k-\frac{1}{2})\theta_N\right)} {\sin(3 \theta_N / 2)} \sin\left(k\theta_N\right) \sin\left((k-1)\theta_N\right).
	\end{align*}
	Factoring out $\frac {\sin\left((k-1)\theta_N\right)}{cc_1 \sin(3 \theta_N /2)}$, we get:
	\begin{align*}
		3(L_1+L_2) \cdot cp_2 \cdot \sin\big((k+1)\theta_N\big) \cdot \sin(3 \theta_N / 2)\\
		+ 2 \cdot \sin\left((k-\frac{1}{2})\theta_N\right) \cdot \sin\left(k\theta_N\right) \cdot \cos(3 \theta_N /2)^2 .
	\end{align*}
	For $1 \le k \le N /2$ this expression is clearly positive.
	Let us deal with $ N / 2 < k < N$.
	In this case, the first term of the sum is negative and the second positive.
	Using the fact that $L_1+L_2 < 1$ and $\sin\left((k-\frac{1}{2}) \theta_N\right) > \sin\left((k+1)\theta_N\right)$ on this domain we have the following lower bound
	\begin{align*}
		\sin\big((k+1)\theta_N\big) \left(3 \cdot \cos(k\theta_N) \cdot \sin(3 \theta_N / 2) + 2 \cdot \sin\left(k\theta_N\right) \cdot \cos(3 \theta_N /2)^2\right) .
	\end{align*}
	Factoring out the sine factor and using the fact that for $N\geq 5$, $\cos(3 \theta_N /2) \ge \frac{1}{2}$ we observe that
	\begin{align*}
		3 \cdot \cos(k\theta_N) \cdot \sin(3 \theta_N / 2) + 2 \cdot \sin\left(k\theta_N\right) \cdot \cos(3 \theta_N /2)^2\\
		\ge 3 \cdot \cos(k\theta_N) \cdot \sin(3 \theta_N / 2) + \sin\left(k\theta_N\right) \cdot \cos(3 \theta_N /2) \\
		\ge 3 \sin((k+\frac{3}{2}) \theta_N) - 2 \cos(3 \theta_N / 2) \cdot \sin(k \theta_N) \\
		\ge 3 \sin((k+\frac{3}{2}) \theta_N) - \sin(k \theta_N)\\
		\ge 3 \sin(k \theta_N) - \frac{3}{2} \theta_N - \sin(k \theta_N)\\
		\ge 2 \sin(k \theta_N) - \frac{3}{2} \theta_N.
	\end{align*}
	This last expression is minimal for $k = N-1$.
	We are then reduced to showing that $\sin(\theta_N) \ge \frac{3}{4} \theta_N$.
	This is true for $N > 4$ since $\cos(x) \ge \frac{3}{4}$ for all $x\in [0, \frac{\pi}{5}]$ and can be checked directly for $N=4$.

\subsubsection{The fourth entry of $MA(-R)^kv_1$}
	\label{sssec:the_fourth_entry_of_rkv_1}
	With the identity $$cc_i^2 + s_i^2 = 2 cc_i$$ for $i \in \{0,1\}$, we can rewrite the fourth coordinate as
	\begin{align*}
		-(L_2+1) cc_1 cp_1 + (L_1+1) cc_2 cp_2 - (L_2+1) sp_1 s_1 + (L_1+1) sp_2 s_2 + 2 \left(cc_1 \frac {sp_1} {s_1} - cc_2 \frac {sp_2} {s_2}\right) \\
		= 3(L_1+L_2)\left(-cp_1 + cp_2 - \frac{s_1}{cc_1} sp_1 + \frac{s_2}{cc_2} sp_2\right) + 2 \left(cc_1 \frac {sp_1} {s_1} - cc_2 \frac {sp_2} {s_2}\right).
	\end{align*}

	\subsubsection{The case $1 \le k \le N/2$}
	Notice that the second term of the sum is exactly the fourth coordinate for $v_3$ proved to be positive in \ref{sssec:the_fourth_entry_of_v_3}.

	The first term of the sum can be rewritten, using \autoref{s_cc}, as
	\[
		4 \cdot 3 (L_1+L_2) \frac {\cos\left((k+\frac{1}{2}) \theta_N\right)} {\cos(3\theta_N / 2)} \sin\left((k+1)\theta_N\right) \sin(k\theta_N)
	\]
	which is positive for $1 \le k < N / 2$.

	\subsubsection{The case $N/2 \le k < N$}
	In this case
	\begin{align*}
		3(L_1+L_2)\left(-cp_1 + cp_2 - \frac{s_1}{cc_1} sp_1 + \frac{s_2}{cc_2} sp_2\right) \ge 3\left(-cp_1 + cp_2 - \frac{s_1}{cc_1} sp_1 + \frac{s_2}{cc_2} sp_2\right)
	\end{align*}
	Using the previous computation and \autoref{sssec:the_fourth_entry_of_v_3}, we are reduced to showing positivity of
	\begin{align*}
		4 \cdot 3 \frac {\cos\left((k+\frac{1}{2}) \theta_N\right)} {\cos(3\theta_N / 2)} \sin\left((k+1)\theta_N\right) \sin(k\theta_N)\\
		+ 2 \cdot 2 \sin(k\theta_N) \left(\frac {\cos(\theta_N / 2)} {\sin(3\theta_N / 2)} - \frac {\cos(3 \theta_N / 2)} {\sin(3\theta_N / 2)} \cos(2 k \theta_N)\right).
	\end{align*}
	Factoring $\frac {4 \sin(k\theta_N)} {\sin(3\theta_N /2) \cos(3 \theta_N / 2)}$ out yields:
	\begin{align*}
		3 \sin(3\theta_N /2) \cos\left((k+\frac{1}{2})\theta_N\right) \sin((k+1)\theta_N)\\
		+ {\cos(3 \theta_N / 2)} \left({\cos(\theta_N / 2)} - {\cos(3 \theta_N / 2)} \cos(2 k \theta_N)\right).
	\end{align*}
	Using the relation
	\[
		2 \cos((k+\frac{1}{2})\theta_N) \sin((k+1)\theta_N) = \sin\left((2k+\frac{3}{2})\theta_N\right) + \sin\left(\theta_N / 2\right),
	\]
	we can rewrite
	\begin{align*}
	\frac{3}{2} {\sin(3\theta_N / 2)} \sin\left((2k+\frac{3}{2})\theta_N\right) - {\cos(3 \theta_N / 2)^2}  \cos\left(2 k \theta_N\right) \\
	+ \frac {3} {2} {\sin(3\theta_N / 2)} \sin(\theta_N /2) + {\cos(3 \theta_N / 2)} {\cos(\theta_N / 2)}
	\end{align*}
	which positivity is equivalent to
	\begin{align*}
	\frac{3}{2} {\sin(3\theta_N / 2)} \cos\left((2k+\frac{3}{2})\theta_N\right) - {\cos(3 \theta_N / 2)^2}  \cos\left(2 k \theta_N\right) \\
	\ge -\frac {3} {2} {\sin(3\theta_N / 2)} \sin(\theta_N /2) - {\cos(3 \theta_N / 2)} {\cos(\theta_N / 2)}.
	\end{align*}
	We now need to understand the minimal value of the function
	\begin{align*}
	\frac{3}{2} {\sin(3\theta_N / 2)} \sin\left((2k+\frac{3}{2})\theta_N\right) - {\cos(3 \theta_N / 2)^2}  \cos\left(2 k \theta_N\right) =\\
	a \sin(x-\epsilon) + b \cos(x) = f(x)
	\end{align*}
	where $a = \frac{3}{2} \sin(3\theta_N /2)$, $b = -\cos(3 \theta_N / 2)^2$ and $\epsilon = 3 \theta_N /2$.
	Then according to \autoref{extremal}, if $x_M$ is a point on which the function is extremal, then
	\begin{align*}
		\tan(x_M) &=  \frac {a \cos(\epsilon)}{b-a\sin(\epsilon)}\\
			  &= \frac {3 \sin(\epsilon) \cos(\epsilon)} {-2 \cos(\epsilon)^2 - 3 \sin(\epsilon)^2}.
	\end{align*}
	Hence
	\begin{align*}
		\tan(-x_M) \le \tan(2\epsilon) \iff \frac {3 \sin(\epsilon) \cos(\epsilon)} {2 \cos(\epsilon)^2 + 3\sin(\epsilon)^2} \le \frac {2 \sin(\epsilon) \cos(\epsilon)} {\cos(\epsilon)^2 - \sin(\epsilon)^2} \\
		\iff \frac {3} {2 \cos(\epsilon)^2 + 3\sin(\epsilon)^2} \le \frac {2} {\cos(\epsilon)^2 - \sin(\epsilon)^2} \\
		\iff \frac {2 \cos(\epsilon)^2 + 3\sin(\epsilon)^2}{3}  \ge \frac {\cos(\epsilon)^2 - \sin(\epsilon)^2}{2} \\
		\iff {4 \cos(\epsilon)^2 + 6\sin(\epsilon)^2}\ge {3\cos(\epsilon)^2 - 3\sin(\epsilon)^2}
	\end{align*}
	which is true.
	Hence $x_M \in (\pi-2\epsilon, \pi)$ and moreover as $f(0) =  - \frac{3}{2} \sin(\epsilon)^2 - \cos(\epsilon)^2 < 0$, $x_M$ is the maximum of $f$ and its minimum is at $x_m = x_M + \pi \in (2\pi-2\epsilon,2\pi)$.
	Thus the above expression takes its minimum value for $k = N-1$ and is bounded from below by
	\begin{align*}
		\frac{3}{2} \sin(3\theta_N / 2) \cos\left((2(N-1)+\frac{3}{2})\theta_N\right) - \cos(3 \theta_N / 2)^2  \cos\left(2(N -1) \theta_N\right) \\
		\frac{3}{2} {\sin(3\theta_N / 2)} \cos\left(\theta_N /2\right) - \cos(3 \theta_N / 2)^2  \cos\left(\theta_N\right) \\
		\ge -\frac {3} {2} {\sin(3\theta_N / 2)} \sin(\theta_N /2) - {\cos(3 \theta_N / 2)} {\cos(\theta_N / 2)}.
	\end{align*}
	The last inequality follows from the fact that $\cos(3 \theta_N/2)^2 \cos(\theta_N) \le \cos(3 \theta_N /2) \cos(\theta_N /2)$.

\subsubsection{The second entry of $MA (-R)^k v_1$}
	\label{sssec:the_second_entry_of_v_1}

	\begin{align*}
		L_2^2 sp_1 s_1 - L_1^2 sp_2 s_2 + cc_1^2 \frac {sp_1} {s_1} - cc_2^2 \frac {sp_2} {s_2} \\
		= (L_2^2 s_1 + \frac{cc_1^2}{s_1}) sp_1 - (L_1^2 s_2 + \frac{cc_2^2}{s_2}) sp_2.
	\end{align*}
	Factoring out $\sin(k \theta_N)$ we get
	\begin{align*}
		-(L_2^2 s_1 + \frac{cc_1^2}{s_1}) \left(2\cos(2k\theta_N)+1\right) + (L_1^2 s_2 + \frac{cc_2^2}{s_2}).
	\end{align*}
	As cosine is decreasing on $[0,\pi]$, and increasing on $[\pi, 2\pi]$, we only have to check that
	\begin{align*}
		-(L_2^2 s_1 + \frac{cc_1^2}{s_1}) \left(2\cos(2\theta_N)+1\right) + (L_1^2 s_2 + \frac{cc_2^2}{s_2}) \ge 0\\
		\intertext{and}
		-(L_2^2 s_1 + \frac{cc_1^2}{s_1}) \left(2\cos(2(N-1)\theta_N)+1\right) + (L_1^2 s_2 + \frac{cc_2^2}{s_2}) \ge 0.
	\end{align*}
	These two inequalities are equivalent to
	\begin{align*}
		\frac {L_1^2 s_2 + \frac{cc_2^2}{s_2}} {L_2^2 s_1 + \frac{cc_1^2}{s_1}} \ge 2\cos(2\theta_N)+1\\
		\iff \frac{s_1}{s_2} \cdot \frac {L_1^2 s_2^2 + cc_2^2} {L_2^2 s_1^2 + cc_1^2} \ge 2\cos(2\theta_N)+1\\
		\iff \left(2 \cos(3\theta_N/2) + 1\right) \frac {L_1^2 s_2^2 + cc_2^2} {L_2^2 s_1^2 + cc_1^2} \ge 2\cos(2\theta_N)+1.
	\end{align*}
	Again as cosine is decreasing on $[0,\pi]$, we only need to show
	\begin{align*}
		{L_1^2 s_2^2 + cc_2^2}  \ge {L_2^2 s_1^2 + cc_1^2} \\
		\iff
		(cc_1+cc_2)(cc_2-cc_1) \ge (L_2 s_1 + L_1 s_2)(L_2 s_2 - L_1 s_1)
	\end{align*}
	Notice that $(L_2 s_1 + L_1 s_2)(L_2 s_2 - L_1 s_1) \le s_2$ and $(cc_1+cc_2)(cc_2-cc_1) = 2 (cc_1+cc_2) \sin(\theta_N) \sin(2 \theta_N)$.
	Hence the inequality follows from the fact that $2 (cc_1+cc_2) \ge 1$.



\subsection{Computing with \texorpdfstring{$Bv_1$}{Bv1}}
	\label{ssec:computing_with_bv_1}

\subsubsection{Setup}
	\label{sssec:setup_computing_with_bv_1}
We finally have to handle the vector $Bv_1$, namely
\[
	Bv_1 =
	\begin{bmatrix}
		L_2\cdot cc_1\\
		-\frac{cc_1^2}{s_1}\\
		-L_1\cdot cc_2\\
		\frac{cc_2^2}{s_2}
	\end{bmatrix}
	\text{ so }
	(-R)^k Bv_1 =
	\begin{bmatrix}
		L_2cc_1\cdot cp_1 + \frac{cc_1^2}{s_1} sp_1\\
		L_2 cc_1\cdot sp_1 - \frac{cc_1^2}{s_1}\cdot cp_1\\
		-L_1 cc_2\cdot cp_2 - \frac{cc_2^2}{s_2}\cdot sp_2\\
		-L_1 cc_2\cdot sp_2 + \frac{cc_2^2}{s_2}\cdot cp_2
	\end{bmatrix}
\]
Computing now $MA(-R)^k Bv_1$ we find:
\[
\begin{bmatrix}
			L_1 L_{2} (cp_1 - {cp_2}) + L_1 cc_1 \frac {sp_1} {s_1} - L_2 cc_2 \frac {sp_2} {s_2} \\
			2 L_2 {cc_1} {cp_1} - 2 L_1 {cc_2} {cp_2} + L_1^2 sp_2 s_2 - L_2^2 sp_1 s_1 + cc_1^2 \frac {sp_1} {s_1} - cc_2^2 \frac {sp_2} {s_2} \\
			L_2 cp_1  - L_1 cp_2  + cc_1 \frac{sp_1}{s_1} - cc_2 \frac{sp_2}{s_2}\\
			(L_2-1) cc_1 cp_1 - (L_1-1) cc_2 cp_2 + L_2 sp_1 s_1 - L_1 sp_2 s_2 + cc_1^2 \frac {sp_1} {s_1} - cc_2^2 \frac {sp_2} {s_2}
\end{bmatrix}
\]

\subsubsection{The first entry of $MA(-R)^kBv_1$}
	Notice that it corresponds to the second entry for $v_0$ proved to be positive in \autoref{sssec:the_second_entry_of_r_kv_0}.

\subsubsection{The third entry of $M_{test}(-R)^kBv_1$}
	\label{sssec:the_third_entry_of_rkBv_1}
	This entry is very similar to the one in \autoref{sssec:the_third_entry_of_rkv_1} and we follow the same scheme of proof.
	\begin{align*}
		L_2 cp_1 &- L_1 cp_2 + cc_1 \frac{sp_1}{s_1} -cc_2 \frac{sp_2}{s_2} \\
		&= (L_2+1) cp_1 - (L_1+1) cp_2 + cp_2-cp_1 + cc_1 \frac{sp_1}{s_1} - cc_2 \frac{sp_2}{s_2}\\
		&= 3(L_1+L_2) \left(\frac{cp_1}{cc_1} - \frac{cp_2}{cc_2}\right) - \left(cp_1-cc_1 \frac{sp_1}{s_1}\right) + \left(cp_2- cc_2 \frac{sp_2}{s_2}\right)\\
		\intertext{Using formulas from \autoref{sssec:the_fourth_entry_of_v_0} we get}
		&= 3(L_1+L_2) \frac{cp_2}{cc_1} \left(\frac{cp_1}{cp_2} - \frac{cc_1}{cc_2}\right) - \frac {\sin\left(3(k+\frac{1}{2})\theta_N\right)} {\sin(3 \theta_N / 2)} + \frac {\sin\left((k+\frac{1}{2})\theta_N\right)} {\sin(\theta_N / 2)} \\
		\intertext{with the angle tripling formula}
		&= -6(L_1+L_2) \frac{cp_2}{cc_1} \left(\cos(2\theta_N) - \cos(2k\theta_N)\right) \\
		& \quad + 2 \cdot \frac {\sin\left((k+\frac{1}{2})\theta_N\right)} {\sin(3 \theta_N / 2)} \left(\cos(\theta_N) - \cos\left((2k+1)\theta_N\right)\right).
	\end{align*}
	Using the difference of cosines formula, and factoring out 4, we get
	\begin{align*}
		-3(L_1+L_2) \frac{cp_2}{cc_1} \sin\left((k+1)\theta_N\right) \sin\left((k-1)\theta_N\right) \\
		+ \frac {\sin\left((k+\frac{1}{2})\theta_N\right)} {\sin(3 \theta_N / 2)} \sin\left(k\theta_N\right) \sin\left((k-1)\theta_N\right).
	\end{align*}
	Factoring be $\frac {\sin\left((k-1)\theta_N\right)}{cc_1 \sin(3 \theta_N /2)}$,
	\begin{align*}
		-3(L_1+L_2) \cdot cp_2 \cdot \sin\big((k+1)\theta_N\big) \cdot \sin(3 \theta_N / 2)\\
		+ 2 \cdot \sin\left((k+\frac{1}{2})\theta_N\right) \cdot \sin\left(k\theta_N\right) \cdot \cos(3 \theta_N /2)^2 .
	\end{align*}
	For $N / 2 \le k < N$ this expression is clearly positive.
	Let us deal with $ 1 \le k < N /2$.
	In this case, the first term of the sum is negative and the second positive.
	Using the fact that $L_1+L_2 < 1$ and $\sin\left((k+1) \theta_N\right) < \sin\left((k+\frac{1}{2})\theta_N\right)$ on this domain we have the following lower bound
	\begin{align*}
		\sin\big((k+\frac{1}{2})\theta_N\big) \left(-3 \cdot \cos(k\theta_N) \cdot \sin(3 \theta_N / 2) + 2 \cdot \sin\left(k\theta_N\right) \cdot \cos(3 \theta_N /2)^2\right) .
	\end{align*}
	Factoring out the first and using the fact that for $N \ge 5$, $\cos(3 \theta_N /2) \ge \frac{1}{2}$ we observe that
	\begin{align*}
		-3 \cdot \cos(k\theta_N) \cdot \sin(3 \theta_N / 2) + \sin\left(k\theta_N\right) \cdot \cos(3 \theta_N /2) \\
		\ge 3 \sin((k-\frac{3}{2}) \theta_N) - 2 \cos(3 \theta_N / 2) \cdot \sin(k \theta_N) \\
		\ge 3 \sin((k-\frac{3}{2}) \theta_N) - \sin(k \theta_N)\\
		\ge 3 \sin(k \theta_N) - \frac{3}{2} \theta_N - \sin(k \theta_N)\\
		\ge 2 \sin(k \theta_N) - \frac{3}{2} \theta_N.
	\end{align*}
	This last expression is minimal for $k=1$.
	We are then reduced to showing that $\sin(\theta_N) \ge \frac{3}{4} \theta_N$.
	This was proved to be true for all $N \ge 4$ in \autoref{sssec:the_third_entry_of_rkv_1}.

\subsubsection{The second entry of $MA(-R)^kBv_1$}
	\label{ssec:the_second_entry_of_rkBv_1}
	Let us give an alternative expression for the second entry.
	\begin{align*}
		2 L_2 cc_1 cp_1 - 2 L_1 cc_2 cp_2 + L_1^2 sp_2 s_2 - L_2^2 sp_1 s_1 + cc_1^2 \frac {sp_1} {s_1} - cc_2^2 \frac {sp_2} {s_2} \\
		= 3(L_1+L_2) (cp_1 - cp_2) - 2 cc_1 cp_1 + 2 cc_2 cp_2 \\
		+ (L_1^2+1) sp_2 s_2 - (L_2^2+1) sp_1 s_1 + 2 cc_1 \frac {sp_1} {s_1} - 2 cc_2 \frac {sp_2} {s_2} \\
		= 3(L_1+L_2) (cp_1 - cp_2) \\
		+ (L_1^2+1) sp_2 s_2 - (L_2^2+1) sp_1 s_1 + 2 cc_1 \left(\frac {sp_1} {s_1} - cp_1\right) - 2 cc_2 \left(\frac {sp_2} {s_2} - cp_2\right) \\
	\end{align*}

	\subsubsection{The case $1 \le k \le N/2$}
	\label{sssec:the_second_entry_of_rkBv_1_case_1}
	Notice that on this domain, we have $cp_1 < cp_2$ and $cc_2 cp_2 - cc_1 cp_1 > 0$.
	Thus we are reduced to showing positivity for
	\[
		3(L_1+L_2) (cp_1 - cp_2) + (L_1^2+1) sp_2 s_2 - (L_2^2+1) sp_1 s_1 + 2 cc_1 \frac {sp_1} {s_1} - 2 cc_2 \frac {sp_2} {s_2}.
	\]
	Factoring $\sin(k \theta_N)$ out, we get
	\begin{align*}
		-6 (L_1+L_2) \sin(2k\theta_N) + \left(2 \frac {cc_2} {s_2} - (L_1^2+1) s_2 \right) - \left(2 \frac {cc_1} {s_1} - (L_2^2+1) s_1 \right) (2 \cos(2k\theta_N) +1)  \\
		= \left(2 \frac {cc_2} {s_2} - (L_1^2+1) s_2\right) - \left(2 \frac {cc_1} {s_1} - (L_2^2+1) s_1\right) \\
		- 2 \left(\left(2 \frac {cc_1} {s_1} - (L_2^2+1) s_1 \right) \cos(2k\theta_N) + 3 (L_1+L_2) \sin(2k\theta_N)\right)  \\
	\end{align*}
	Let us determine the maximal value of
	$$ \left(2 \frac {cc_1} {s_1} - (L_2^2+1) s_1 \right) \cos(2k\theta_N) + 3 (L_1+L_2) \sin(2k\theta_N) = f(2k \theta_N).$$
	There exists a unique $x_M \in [0, \pi)$ such that $f(x_M)$ is maximal, since $L_1+L_2 > 0$, and it satisfies
	$$\tan(x_M) = \frac {3(L_1+L_2) s_1} {2 cc_1 - (L_2^2+1) s_1} \le \frac {3 s_1} {2 - 2 s_1^2} = \frac {3} {2 c_1} \tan(\theta_N).$$
	For $N \ge 5$, $\cos(3 \pi / 2N) \ge \frac{1}{2}$, hence
	$$\tan(x_M) \le 3 \tan(\theta_N).$$
	By convexity of the tangent function on $[0,\frac{\pi}{2})$ this is less than $\tan(2\theta_N)$.
	This implies that the maximum of the entry is reached at $k=1$.
	Hence the minimal value of the entry on this domain is also reached at $k=1$ which evaluated on the first expression gives
	\begin{align*}
		3 (L_1+L_2) (c_1 - c_2) - L_1^2 s_2^2 + L_2^2 s_1^2 - cc_1^2 + cc_2^2 \\
		= 3 (L_1 + L_2) (c_1-c_2) + (cc_1+cc_2)(c_2-c_1) + L_2^2 s_1^2 - L_1^2 s_2^2 \\
		\ge (cc_1 + cc_2 - 3) (c_2-c_1).
	\end{align*}
	This last expression is positive, since $\cos(3\theta_N)+\cos(\theta_N) = 2 \cos(\theta_N) (\cos(2 \theta_N) + 2)$ which is increasing with $N$ and equal to $\frac{4}{\sqrt{2}} > 1$ for $N = 4$.

	\subsubsection{The case $N/2 < k < N$}
	We compute the difference with the fourth entry
	\begin{align*}
	-L_2 (L_2+1) sp_1 s_1 + L_1 (L_1 +1) sp_2 s_2 + 3(L_1+L_2)(cp_1-cp_2)
	\end{align*}
	Factoring $\sin(k \theta_N)$ out, we get
	\begin{align*}
		L_2 (L_2+1) s_1 (2 \cos(2k\theta_N)+1) - L_1 (L_1 +1) s_2 - 6(L_1+L_2) \sin(2k\theta_N).
	\end{align*}
	Notice that this expression evaluated at $k = 0$ is
	\begin{align*}
		3 L_2 (L_2+1) s_1  - L_1 (L_1 +1) s_2 \ge 2 L_2 (L_2+1) s_1 > 0.
	\end{align*}
	And at $k = \frac{N}{2} + \frac{1}{2}$,
	\begin{align*}
		L_2 (L_2+1) s_1 (-2 c_2 + 1) - L_1 (L_1 +1) s_2 + 6(L_1+L_2) s_2 \\
		 \ge 6(L_1+L_2) s_2 - L_2 (L_2+1) s_1 - L_1 (L_1 +1) s_2 \\
		 \ge 6(L_1+L_2) s_2 - 2 s_1 (L_1 + L_2).\\
	\end{align*}
	Where we have assumed that $N$ is large enough for $L_1$ to be positive.
	Otherwise we have the lower bound
	\begin{align*}
		 6(L_1+L_2) s_2 - L_2 (L_2+1) s_1 \ge 6 s_2 - 2 s_1.
	\end{align*}
	Hence positivity of the expression follows from the inequality $3 s_2 \ge s_1$ \textit{i.e.} $2 \cos(2 \theta_N) + 1 \le 3$.

\subsubsection{The fourth entry of $MA(-R)^kBv_1$}
	\label{sssec:the_fourth_entry_of_rkBv_1}
	We rewrite it as:
	\begin{align*}
		(L_2-1) cc_1 cp_1 - (L_1-1) cc_2 cp_2 + L_2 sp_1 s_1 - L_1 sp_2 s_2 + cc_1^2 \frac {sp_1} {s_1} - cc_2^2 \frac {sp_2} {s_2} \\
		=
		(1-L_1) cc_2 cp_2 - (1-L_2) cc_1 cp_1 + (1-L_1) sp_2 s_2 - (1-L_2) sp_1 s_1 \\
		+ 2 \left(cc_1 \frac {sp_1} {s_1} - cc_2 \frac {sp_2} {s_2}\right) \\
		=
		3 (L_1+L_2) \left(cp_1-cp_2+sp_1 \frac{s_1}{cc_1} - sp_2 \frac{s_2}{cc_2}\right)\\
		- 2(cc_1 cp_1 - cc_2 cp_2 + sp_1 s_1 - sp_2 s_2)
		+ 2\left(cc_1 \frac{sp_1}{s_1} - cc_2 \frac{sp_2}{s_2}\right)
	\end{align*}

	\subsubsection{The case $1 \le k \le N/2$}
	We have proved in \autoref{sssec:the_second_entry_of_rkBv_1_case_1} positivity of
	\[
		3(L_1+L_2) (cp_1 - cp_2) + (L_1^2+1) sp_2 s_2 - (L_2^2+1) sp_1 s_1 + 2 cc_1 \frac {sp_1} {s_1} - 2 cc_2 \frac {sp_2} {s_2}.
	\]
	We compute the difference of the fourth entry with this expression
	\begin{align*}
		2 (cc_2 cp_2 - cc_1 cp_1) + (L_2^2 + L_2+1) sp_1 s_1 - (L_1^2 + L_1 +1) sp_2 s_2.
	\end{align*}
	As $cc_2 cp_2 > cc_1 cp_1$ on the domain, we are reduced to showing
	\begin{align*}
		(L_2^2 + L_2+1) sp_1 s_1 \ge (L_1^2 + L_1 +1) sp_2 s_2
	\end{align*}
	which is true since $s_1 > s_2 > 0$, $sp_1 = (2 \cos(2 k \theta_N) + 1) sp_2 \ge sp_2 \ge 0$ on the domain, $L_1 < L_2$ and $L_1^2 + L_1 + 1 \ge 0$.
	This last inequality follows from the fact that $L_2 > \frac{1}{2}$, $3(L_1+L_2) = (L_2+1) cc_2 > 0$ and finaly $L_1+\frac{1}{2} > 0$.

	\subsubsection{The case $N/2 < k < N$}
	First notice that, by \autoref{s_cc},
	\begin{align*}
		cp_1-cp_2+sp_1 \frac{s_1}{cc_1} - sp_2 \frac{s_2}{cc_2} = - 4 \cdot \frac {\cos((k+\frac{1}{2})\theta_N)} {\cos(3\theta_N /2)} \cdot \sin(k \theta_N) \cdot \sin((k+1)\theta_N)
	\end{align*}
	which is non negative on the domain.
	Thus we are reduced to showing positivity of
	\begin{align*}
		\frac{cc_1}{s_1} sp_1 - cc_1 cp_1 - sp_1 s_1
		-\frac{cc_2}{s_2} sp_2 + cc_2 cp_2 + sp_2 s_2.
	\end{align*}
	Now, notice that
	\begin{align*}
		\frac{cc_1}{s_1} sp_1 - cc_1 cp_1 - sp_1 s_1
		= \frac {\cos(3\theta_N /2)} {\sin(3 \theta_N /2)} \left(-\sin(3k\theta_N)\right) - \left(1 + \cos(3\theta_N) \right) cp_1 - sp_1 s_1 \\
		= \frac{1}{\sin(3 \theta_N /2)} \big( \cos(3\theta_N /2) (-\sin(3 k \theta_N)) - \cos(3 k \theta_N) \sin(3 \theta_N /2) \big) \\
		- \cos(3 \theta_N) \cos(3 k \theta_N) + \sin(3k \theta_N) \sin(3 \theta_N)\\
		= \frac {- \sin(3(k+\frac{1}{2}) \theta_N)} {\sin(3\theta_N /2)} - \cos(3(k+1) \theta_N).
	\end{align*}
	Similarly, we have
	\begin{align*}
		\frac{cc_2}{s_2} sp_2 - cc_2 cp_2 - sp_2 s_2
		= \frac {- \sin((k+\frac{1}{2}) \theta_N)} {\sin(\theta_N /2)} - \cos((k+1) \theta_N).
	\end{align*}
	Subtracting the two terms, we get
	\begin{align*}
		\frac {- \sin(3(k+\frac{1}{2}) \theta_N)} {\sin(3\theta_N /2} + \frac {\sin((k+\frac{1}{2}) \theta_N)} {\sin(\theta_N /2)} - \cos(3(k+1) \theta_N) + \cos((k+1) \theta_N)\\
		= \frac {\sin((k+\frac{1}{2})\theta_N)} {\sin(3 \theta_N /2)} \left(2 \cos(\theta_N) - 2 \cos((2k+1)\theta_N)\right) - \cos(3(k+1) \theta_N) + \cos((k+1) \theta_N)\\
		\intertext{ Using the formula for sum of cosines,}
		= 4 \cdot \frac {\sin((k+\frac{1}{2})\theta_N)} {\sin(3 \theta_N /2)} \sin(k \theta_N) \sin((k+1)\theta_N) + 2 \cdot \sin((k+1)\theta_N) \sin(2 (k+1)\theta_N)\\
		= 4 \cdot \frac {\sin((k+\frac{1}{2})\theta_N)} {\sin(3 \theta_N /2)} \sin(k \theta_N) \sin((k+1)\theta_N) + 4 \cdot \sin((k+1)\theta_N)^2 \cos((k+1)\theta_N)\\
		= 4 \cdot \sin((k+1)\theta_N) \left(\frac {\sin((k+\frac{1}{2})\theta_N)} {\sin(3 \theta_N /2)} \sin(k \theta_N) + \sin((k+1)\theta_N) \cos((k+1)\theta_N)\right).
	\end{align*}
	The sine function is decreasing on this domain, hence we bound from below by
	\begin{align*}
		4 \sin((k+1)\theta_N)^2 \left(\frac {\sin(k \theta_N)} {\sin(3 \theta_N /2)}  + \cos((k+1)\theta_N)\right).
	\end{align*}
	We are thus reduced to showing positivity for
	\begin{align*}
		\sin(k \theta_N) + \sin(3 \theta_N /2) \cos((k+1)\theta_N)
	\end{align*}
	in the range $k > \frac{N}{2}$ and $k+1 < N$.
	As this last expression is decreasing on the domain, we only have to check it for $k = N-2$, \textit{i.e.}
	\begin{align*}
		\sin(2 \theta_N) - \sin(3 \theta_N /2) \cos(\theta_N) \ge 0.
	\end{align*}
	This is implied by the fact that $\sin(2\theta_N) \ge \sin(3 \theta_N /2)$.




\section{Symplectic Geometry and Causality}
	\label{sec:symplectic_geometry_and_causality}

\paragraph{Outline of section}
Introduced by Drumm in \cite{Drumm1992_Fundamental-polyhedra-for-Margulis-space-times}, crooked surfaces are used in Lorenzian geometry to produce fundamental domains for group actions, see e.g. \cite{DancigerGueritaudKassel2016_Geometry-and-topology-of-complete-Lorentz-spacetimes-of-constant-curvature}.
The basic definitions and constructions are introduced in \autoref{ssec:crooked_surfaces}.
In \autoref{ssec:disjointness_of_crooked_surfaces} we will reinterpret some of the disjointness criteria for crooked surfaces from \cite{BurelleCharetteFrancoeur2021_Einstein-tori-and-crooked-surfaces} using cones.
We extend their analysis to situations when crooked surfaces can touch, a geometric situation that occurs in our case.
Finally, in \autoref{ssec:domain_of_discontinuity}, we will construct a domain of discontinuity for the action of $\Gamma_N$ on $\LGr(V)$.
The construction will be in two stages: first an open set $\Omega^{\circ}$ built directly from the definition of crooked surfaces, then a larger domain $\Omega$ where we have added some sets where the crooked surfaces ``touch'', but on which the action is nonetheless properly discontinuous.

\subsubsection*{General conventions}
To lighten the notation, when it is clear from the context a nonzero element in a vector space and the induced line in the projectivization will carry the same notation.
Given their structure, it seems natural to us to call the objects in this section ``winged surfaces'' instead of ``crooked surfaces''.
We will continue to use the term ``crooked surface'' but denote them by $\cW\cS$, to denote that they consist of a wing and a stem.


\subsection{Crooked surfaces}
	\label{ssec:crooked_surfaces}

\subsubsection{Symplectic conventions}
	\label{sssec:symplectic_conventions}
Let $V$ be a real $4$-dimensional symplectic vector space.
Fix a basis $e_1,f_1,e_2,f_2$ such that the symplectic pairing denoted by $I$ satisfies
\[
	I(e_1,f_1)=I(e_2,f_2)=1
\]
Fix also an anti-symplectic involution $A$ given by the formula:
\[
	Ae_i = - e_i \quad Af_i = -f_i
\]
Let us note for convenience of reference that our basis is related to the one used in \cite[\S5]{BurelleCharetteFrancoeur2021_Einstein-tori-and-crooked-surfaces} by:
\begin{align}
	\label{eqn:uv_pm_e_i_f_i_notation}
	\begin{split}
	e_1 = u_+\\
	e_2 = v_+
	\end{split}
	\begin{split}
	f_1 &= -u_-\\
	f_2 &= v_-
	\end{split}
\end{align}
Note in particular the minus sign in front of $u_-$, which we hope minimizes the number of further negative signs later.

Given two vectors $v,v'\in V$, not proportional, we will denote by $L_{vv'}$ their $2$-dimensional span or its projectivization.
Typically we will consider the case when this is a Lagrangian.

\subsubsection{The cone}
	\label{sssec:the_cone_symplectic_simplicial}
In analogy with our constructions in previous sections, we will consider the cone
\[
	\cC := \bR_{\geq 0}e_1 + \bR_{\geq 0}e_2
	+\bR_{\geq 0}f_1 + \bR_{\geq 0}f_2
\]
We regard this as a projective cone $\cC\subset \bP(V_\bR)$, and later will denote by $\overset{\circ}{\cC}$ the interior of the cone, also in projective space.
The projective cone is a tetrahedron with four of the edges contained in the Lagrangian planes (projective lines):
\begin{align*}
	\begin{split}
	L_{e_1e_2} := \text{span}(e_1,e_2)\\
	L_{f_1f_2} := \text{span}(f_1,f_2)
	\end{split}
	\begin{split}
	L_{e_1f_2} := \text{span}(e_1,f_2)\\
	L_{e_2f_1} := \text{span}(e_2,f_1)
	\end{split}
\end{align*}
The remaining two edges are contained in the projectivization of subspaces orthogonal for the symplectic form, and on which the symplectic form is non-degenerate:
\begin{align*}
	S_1 = \text{span}(e_1,f_1)\quad
	S_2 = \text{span}(e_2,f_2)\
\end{align*}

\subsubsection{Indefinite inner product conventions}
	\label{sssec:indefinite_inner_product_conventions}
Let now $W:=\Lambda^2_0V$ denote the subspace of the second exterior power which wedges to zero against $e_1\wedge f_1 + e_2\wedge f_2$ (or equivalently is in the kernel of the symplectic form).
Then $W$ is equipped with a nondegenerate quadratic form of signature $(2,3)$ given by taking the wedge product of elements and using the trivialization of $\Lambda^4 V$ by the volume form induced from the symplectic pairing.
Given these sign conventions, we will say that a subspace is time-like if it is positive definite, and space-like if it is negative definite.

We will use the explicit basis of $W$ given by
\[
	e_1\wedge e_2 \quad
	e_1\wedge f_2 \quad
	e_1\wedge f_1 - e_2\wedge f_2
	\quad
	f_1\wedge e_2
	\quad f_1\wedge f_2
\]
In this basis the induced involution (which actually preserves the inner product) has eigenvalue $+1$ on $e_1\wedge e_2$ and $f_1\wedge f_2$, and eigenvalue $-1$ on the remaining three basis vectors.

\subsubsection{The Lagrangian Grassmannian}
	\label{sssec:the_lagrangian_grassmannian}
Recall next that the Lagrangian Grassmannian $\LGr(V)$ is equal to the quadric of null vectors in $\bP(W)$:
\[
	\LGr(V)=\{[w]\in \bP(W)\colon w^2=0\}\subset \bP(W).
\]
It is equipped with a conformal class of Lorenzian metrics of signature $(1,2)$ and is frequently also called an Einstein universe and denoted $\Ein^{1,2}$.

\subsubsection{Photons}
	\label{sssec:photons_def}
Associated to a nonzero vector $v\in V$ (rather, the corresponding point in $\bP(V)$) there is a ``photon'' of Lagrangians:
\[
	\phi(v):=\{L\in \LGr(V)\colon v\in L\} \subset \LGr(V)
\]
The photon can also be identified as
\[
	\phi(v) = \bP\left(v\wedge v^{\perp}\right)\isom \bP\left(v^{\perp}/v\right)
\]
where $v^{\perp}\subset V$ denotes the symplectic-orthogonal to $v$.

\subsubsection{The wings of the crooked surface}
	\label{sssec:the_wings_of_the_crooked_surface}
We can now define the crooked surface.
It consists of two ``wings'' and a ``stem'' (the stem also decomposes into two pieces, see \autoref{sssec:the_stem_of_the_crooked_surface} below).

Consider the ``interval of lines''
\begin{align}
	\label{eqn:edge_of_tetrahedron_def}
	[e_1,e_2]:=\{se_1+te_2 \colon s,t\geq 0, s+t = 1\}\subset \bP(V)
\end{align}
which is one of the boundary edges of the cone $\cC$.
Then the $e$-wing is defined as:
\[
	\cW_e:=\phi([e_1,e_2]) = \bigcup_{l\in [e_1,e_2]}\phi(l).
\]
Analogously define the $f$-wing:
\[
	\cW_f:= \phi([f_1,f_2]).
\]

\subsubsection{The stem of the crooked surface}
	\label{sssec:the_stem_of_the_crooked_surface}
Consider the ``Einstein torus'' consisting of Lagrangians spanned by one vector in each of $S_1,S_2$:
\begin{align}
	\label{eqn:Einstein_torus_definition}
	\Ein^{1,1} (S_1,S_2) = \{L = w_1\wedge w_2 \colon w_i \in S_i\}
\end{align}
Then the stem is defined as:
\[
	\cS:=\{L\in \Ein^{1,1}(S_1,S_2)\colon |\text{Maslov}(L_{e_1f_2},L,L_{e_2f_1})| = 2\}
\]
Note that we can further decompose the stem as $\cS= \cS^+\coprod \cS^-$ according to the sign of the Maslov index (see \autoref{sssec:making_the_stem_explicit} below for the definition of the Maslov index).

Set now the crooked surface to be
\begin{align}
	\label{eqn:crooked_surface_def}
	\cW \cS := \cW_e \coprod \cS \coprod \cW_f
\end{align}
Observe that according to the definitions, the wings are relatively closed subsets, while the stem is a relatively open set in $\cW\cS$.

\subsubsection{Making the stem explicit}
	\label{sssec:making_the_stem_explicit}
Recall that the Maslov index of a Lagrangian is defined as the index of the quadratic form obtained from the symplectic form, using the direct sum decomposition provided by two transverse Lagrangians.
In the case at hand $V = L_{e_1f_2}\oplus L_{e_2f_1}$ and the quadratic form, denoted $Q_{12}$, comes out to be
\begin{align*}
	Q_{12}(\alpha_1 e_1 + \alpha_2 e_2 + \beta_1f_1 + \beta_2f_2) & :=
	I(\alpha_1e_1 + \beta_2f_2, \alpha_2 e_2 + \beta_1 f_1)\\
	& = \alpha_1 \cdot \beta_1 - \alpha_2\cdot \beta_2
\end{align*}
If the Lagrangian $L$ is spanned by $w_i\in S_i$ with coordinates
\[
	w_1 = \alpha_1 e_1 + \beta_1 f_1 \quad
	w_2 = \alpha_2 e_2 + \beta_2 f_2
\]
then we observe that $w_1$ and $w_2$ are orthogonal with respect to $Q_{12}$ and $Q_{12}(w_i)=(-1)^{i+1}\alpha_i\cdot \beta_i$.
So for the Lagrangian to belong to the stem both products have to be of opposite sign.
We thus have:
\begin{align*}
	\begin{split}
	L &= w_1\wedge w_2 \text{ belongs to:}\\
	w_1& = \alpha_1 e_1 + \beta_1 f_1\\
	w_2& = \alpha_2 e_2 + \beta_2 f_2
	\end{split}
	&
	\begin{split}
	\cS^+&\\
	\alpha_1\cdot \beta_1& >0\\
	\alpha_2\cdot \beta_2& <0
	\end{split}
	\begin{split}
	\cS^-&\\
	\alpha_1\cdot \beta_1& <0\\
	\alpha_2\cdot \beta_2& >0
	\end{split}
\end{align*}

\subsubsection{Action of reflection}
	\label{sssec:action_of_reflection}
The anti-symplectic involution $A$ preserves the crooked surface as a set.
Furthermore it exchanges the two components of the stem: $A\cS^{\pm}=\cS^\mp$ and fixes as a set each photon on the wings.
On individual photons on the wings, it fixes two points and exchanges the two complementary regions.
Explicitly, on the photon $\phi(s\cdot e_1 + t\cdot e_2)$ belonging to the $e$-wing, the fixed points are the two Lagrangians $e_1\wedge e_2$ and $(se_1+te_2)\wedge(-tf_1+sf_2)$.
The formula for photons on the $f$-wing is analogous.



\subsection{Disjointness of crooked surfaces}
	\label{ssec:disjointness_of_crooked_surfaces}

In this section, we proceed to study the geometric configurations that crooked surfaces, photons, and Lagrangians, can be in.
First, some of the results from \cite{BurelleCharetteFrancoeur2021_Einstein-tori-and-crooked-surfaces} can be reinterpreted using the cones that we introduced earlier.
This gives transparent geometric conditions for when photons, or crooked surfaces, are disjoint.
We then further refine our analysis to situations when crooked surfaces can ``touch'', an inevitable situation when facing groups with unipotent elements.

\begin{proposition}[Disjointness of photon from crooked surface]
	\label{prop:disjointness_of_photon_from_crooked_surface}
	Consider a vector $v\in V$ with coordinates
	\[
		v = \alpha_1 e_1 + \alpha_2 e_2 + \beta_1 f_1 + \beta_2 f_2.
	\]
	The following are equivalent:
	\begin{enumerate}
		\item The photon $\phi(v)\in \LGr(V)$ is disjoint from the crooked surface $\cW\cS$.
		\item The following inequalities hold:
			\[
				\alpha_1\cdot\alpha_2>0
				\quad \text{ and }\quad
				\beta_1\cdot\beta_2>0.
			\]
		\item The (projectivized) vector $v$ is either in the interior of the cone $\overset{\circ}{\cC}$ or in the interior of the reflected cone $A\overset{\circ}{\cC}$.
	\end{enumerate}
\end{proposition}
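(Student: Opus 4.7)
The plan is to test the photon $\phi(v)$ against each of the three pieces $\cW_e$, $\cW_f$, and $\cS$ of the crooked surface separately, translate each disjointness condition into inequalities on the coordinates $(\alpha_1,\alpha_2,\beta_1,\beta_2)$, and combine them to obtain (2); the equivalence (2)$\iff$(3) is then read off from the coordinate description of the cones.

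For the wings, $\phi(v)\cap\cW_e\neq\emptyset$ exactly when there exist $s,t\geq 0$ not both zero such that $v$ and $se_1+te_2$ span a Lagrangian, i.e.\ $I(v,se_1+te_2)=0$. A direct computation in the symplectic basis gives $I(v,se_1+te_2)=-(s\beta_1+t\beta_2)$, which admits a nontrivial nonnegative solution precisely when $\beta_1\beta_2\leq 0$; the degenerate possibility $v\in[e_1,e_2]$ itself has $\beta_1=\beta_2=0$. Thus $\phi(v)\cap\cW_e=\emptyset$ iff $\beta_1\beta_2>0$, and symmetrically $\phi(v)\cap\cW_f=\emptyset$ iff $\alpha_1\alpha_2>0$.

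For the stem I would use the canonical splitting $V=S_1\oplus S_2$ and decompose $v=v^{(1)}+v^{(2)}$ with $v^{(i)}\in S_i$. Any $L\in\Ein^{1,1}(S_1,S_2)$ decomposes as $L=\ell_1\oplus\ell_2$ with $\ell_i\subset S_i$ one-dimensional, and $v\in L$ forces $\ell_i=\mathrm{span}(v^{(i)})$ whenever $v^{(i)}\neq 0$. Substituting $w_i=v^{(i)}$ into the description from \autoref{sssec:making_the_stem_explicit} shows that $L\in\cS$ iff $\alpha_1\beta_1$ and $-\alpha_2\beta_2$ are nonzero of the same sign, i.e.\ $\alpha_1\alpha_2\beta_1\beta_2<0$. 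When some $v^{(i)}$ vanishes, $v$ lies on an edge of $\cC$ contained in a wing, so the corresponding wing condition already fails. Combining: under (2) one has $\alpha_1\alpha_2\beta_1\beta_2>0$, so the stem condition is automatically violated and (2)$\Rightarrow$(1); conversely each failure of (2) produces an intersection with one of the three pieces of $\cW\cS$.

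Finally, (2)$\iff$(3) is an inspection of sign patterns: $\overset{\circ}{\cC}$ is cut out by the strict positivity of all four coordinates, while $A\overset{\circ}{\cC}$ is cut out by the complementary sign conditions coming from the action of the anti-symplectic involution, and passing to $\bP V$ (where $v$ and $-v$ are identified) their union exhausts precisely the sign classes satisfying $\alpha_1\alpha_2>0$ and $\beta_1\beta_2>0$. The main technical subtlety throughout is the careful handling of degenerate coordinates, which is what forces the inequalities in (2) to be strict rather than merely $\geq 0$.
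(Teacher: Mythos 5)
Your proof is correct, but it takes a genuinely different route from the paper: the paper disposes of the equivalence (i)$\iff$(ii) by citing it as a restatement of \cite[Lemma 9]{BurelleCharetteFrancoeur2021_Einstein-tori-and-crooked-surfaces} and only supplies the one-line translation (ii)$\iff$(iii) into cones, whereas you essentially reprove that lemma from scratch. Your decomposition into wings and stem is the right one, and the two key computations are sound: $\phi(v)\cap\phi(w)\neq\emptyset$ iff $I(v,w)=0$, so $I(v,se_1+te_2)=-(s\beta_1+t\beta_2)$ has a nontrivial nonnegative zero iff $\beta_1\beta_2\leq 0$ (and symmetrically for $\cW_f$); and for the stem, the direct-sum decomposition forces the unique candidate Lagrangian in $\Ein^{1,1}(S_1,S_2)$ through $v$ to be $\mathrm{span}(v^{(1)})\oplus\mathrm{span}(v^{(2)})$, which lies in $\cS$ iff $\alpha_1\alpha_2\beta_1\beta_2<0$, automatically excluded under (2). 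What your approach buys is self-containedness -- the reader need not consult the cited paper, and the argument makes transparent that the wings alone already detect every failure of (2), with the stem only needing to be ruled out in the forward direction. Two small imprecisions worth fixing: the phrase ``$v$ lies on an edge of $\cC$ contained in a wing'' conflates $\bP(V)$ with $\LGr(V)$ and is false if the surviving coordinates have mixed signs (the correct statement is simply that $v^{(i)}=0$ forces $\alpha_1\alpha_2=\beta_1\beta_2=0$, so the wing criterion already fails); and in identifying $A\overset{\circ}{\cC}$ you should note that the paper's displayed formula $Ae_i=-e_i$, $Af_i=-f_i$ is evidently a typo for an involution negating only the $e_i$ (or only the $f_i$), since as written $A=-\id$ would be symplectic and $A\cC=\cC$ projectively -- your reading of the ``complementary sign classes'' is the intended one and matches the paper's own description of $A\cC$.
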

\begin{proof}
	The equivalence of (i) and (ii) is simply a restatement of \cite[Lemma 9]{BurelleCharetteFrancoeur2021_Einstein-tori-and-crooked-surfaces}.
	The geometric interpretation of (ii) with cones in (iii) follows directly.
	Indeed the cone $\cC$ corresponds to vectors with all coordinates of the same sign, while $A\cC$ to those vectors where the pair $(\alpha_1:\alpha_2)$ has the same sign, and so does $(\beta_1:\beta_2)$, but the signs of the two pairs are opposite.
\end{proof}

\subsubsection{Position of a Lagrangian}
	\label{sssec:position_of_a_lagrangian}
Let us also list the possibilities for the position of a Lagrangian relative to the crooked surface, when viewing the picture in $\bP(V)$.
Regard the Lagrangian as projectivized in $\bP(V)$, thus yielding a line.
If the Lagrangian intersects the interior of either $\cC$ or $A\cC$ then it is clearly in the corresponding component in $\LGr(V)$, since it lies on a photon entirely contained in such a component.
But the Lagrangian could intersect also just the boundary, say the boundary of $\cC$ for simplicity.
If it intersects a vertex, or more generally one of the edges $[e_1,e_2]$ or $[f_1,f_2]$ then it clearly lies on the respective wing.
If it intersects one of the edges $[e_1,f_1]$ or $[e_2,f_2]$, the assumption that it doesn't go through the interior of $\cC$ implies that $L$ must belong to one of pieces of the stem $\cS^{\pm}$.
Finally, suppose $L$ intersects the edge $[e_1,f_2]$ (for $[e_2,f_1]$	the analysis is similar), say in $v=\alpha_1e_1+\beta_2f_2$ with $\alpha_1,\beta_2>0$.
Then its orthogonal complement is spanned by $e_1,f_2,\beta_2f_1 + \alpha_1 f_1$, and unless $L$ is the span of $e_1,f_2$, it is immediate that some linear combination of vectors in $L$ lies in the interior of $\cC$, placing $L$ in the interior of the respective component.

Next, we have the following criterion:
\begin{proposition}[Disjointness of crooked surfaces]
	\label{prop:disjointness_of_crooked_surfaces}
	Let $\cW\cS,\cW\cS$ be two crooked surfaces, with corresponding vectors $e_i,f_i, e_i',f_i'$, cones $\cC,\cC'$ and anti-symplectic involutions $A,A'$.
	The following are equivalent:
	\begin{enumerate}
		\item The crooked surfaces $\cW\cS$ and $\cW\cS'$ are disjoint.
		\item The photons $\phi(e_1),\phi(e_2),\phi(f_1),\phi(f_2)$ are disjoint from $\cW\cS'$ and also
		the photons $\phi(e_1'),\phi(e_2'),\phi(f_1'),\phi(f_2')$ are disjoint from $\cW\cS$.
		\item The following vectors are contained in the interiors of the cones:
		\[
			e_1,e_2,f_1,f_2 \in \overset{\circ}{\cC'} \cup A'\cdot \overset{\circ}{\cC'}
			\text{ and }
			e_1',e_2',f_1',f_2' \in \overset{\circ}{\cC} \cup A\cdot \overset{\circ}{\cC}.
		\]
	\end{enumerate}
\end{proposition}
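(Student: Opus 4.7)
The plan is to establish the chain (iii) $\iff$ (ii) $\iff$ (i), with essentially all substantive content in the implication (iii) $\Rightarrow$ (i). The equivalence (ii) $\iff$ (iii) is immediate: \autoref{prop:disjointness_of_photon_from_crooked_surface}, applied separately to each of the eight extremal photons $\phi(e_i), \phi(f_i), \phi(e_i'), \phi(f_i')$, translates each photon-disjointness statement into exactly the cone-membership condition appearing in (iii). The direction (i) $\Rightarrow$ (ii) is also direct, since each photon $\phi(e_i)$ lies in the wing $\cW_e \subset \cW\cS$ by construction, so disjointness of the crooked surfaces passes to each such photon.

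For the substantive direction (iii) $\Rightarrow$ (i), I plan to decompose a hypothetical common Lagrangian $L \in \cW\cS \cap \cW\cS'$ into the cases where $L$ lies on a wing or on the stem of $\cW\cS$, and rule out each. The wing case is the heart of the proof: if $L \in \cW_e \cap \cW\cS'$, then $L$ lies on $\phi(v)$ for some $v = s\cdot e_1 + t\cdot e_2$ with $s,t \ge 0$, so \autoref{prop:disjointness_of_photon_from_crooked_surface} gives $v \notin \overset{\circ}{\cC'} \cup A'\overset{\circ}{\cC'}$, whereas (iii) places $e_1, e_2$ in this union. The key observation, which I expect to be the main obstacle, is that $e_1$ and $e_2$ must in fact lie in the \emph{same} one of the two open cones. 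This follows from $I(e_1,e_2)=0$ (since $\text{span}(e_1,e_2)$ is Lagrangian): a direct sign computation in normalized representatives of $\overset{\circ}{\cC'}$ (all four coordinates positive) and of $A'\overset{\circ}{\cC'}$ (first two coordinates positive, last two negative, up to overall sign) shows that every term of $I(v_1,v_2)$ then carries the same sign, forcing $I(v_1,v_2) \ne 0$. Once $e_1,e_2$ share a common open cone, choosing representatives with that sign pattern makes every $se_1 + te_2$, $s,t \ge 0$, remain in the same cone, contradicting $v \notin \overset{\circ}{\cC'} \cup A'\overset{\circ}{\cC'}$. The $\cW_f$ case is identical, and the symmetric wing cases $\cW\cS \cap \cW_{e'}$ and $\cW\cS \cap \cW_{f'}$ follow by swapping the roles of primed and unprimed.

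For the remaining case, where $L$ lies on the stem of one of the two surfaces, I would apply the explicit Maslov characterization of $\cS^{\pm}$ from \autoref{sssec:making_the_stem_explicit}: a stem Lagrangian is spanned by vectors $w_i \in S_i$ with prescribed signs in the $(e_i,f_i)$-basis. If simultaneously $L$ lies on $\cW\cS'$, then $L$ contains either a vector along an edge $[e_i',e_j']$ or $[f_i',f_j']$ (wing intersection) or is spanned by vectors in $S_1',S_2'$ with primed sign constraints (stem-stem intersection); combining these sign conditions with (iii) applied to spanning vectors of $L$ in the primed basis yields the required contradiction. The extension of \cite{BurelleCharetteFrancoeur2021_Einstein-tori-and-crooked-surfaces} announced in the introduction of \autoref{ssec:disjointness_of_crooked_surfaces} is needed only to handle degenerate boundary situations where the two surfaces share limit points without a common Lagrangian; the strict-inequality formulation of (iii) sidesteps this technicality in the present statement.
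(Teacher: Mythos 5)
Your proposal is correct, but on the only hard implication it takes a genuinely different route from the paper. The paper's own proof is a two-line outsourcing: the equivalence of (i) and (ii) is quoted directly from \cite[Thm.~10]{BurelleCharetteFrancoeur2021_Einstein-tori-and-crooked-surfaces}, and (ii) $\iff$ (iii) is \autoref{prop:disjointness_of_photon_from_crooked_surface} applied to each of the eight extremal photons --- exactly your easy step. What you supply instead is a self-contained proof of (iii) $\Rightarrow$ (i). Your central observation is right and is the real content: if $v_1\in\overset{\circ}{\cC'}$ and $v_2\in A'\cdot\overset{\circ}{\cC'}$ then, in the primed basis, all four terms of $I(v_1,v_2)=\alpha_1\beta_1''-\beta_1\alpha_1''+\alpha_2\beta_2''-\beta_2\alpha_2''$ carry the same strict sign, so $I(v_1,v_2)\neq 0$; hence two vectors spanning a Lagrangian and both satisfying the cone condition lie in the \emph{same} open cone, and by convexity the whole edge $[e_1,e_2]$ (resp.\ $[f_1,f_2]$) does too, killing all wing intersections via \autoref{prop:disjointness_of_photon_from_crooked_surface}. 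This buys a proof internal to the paper's cone formalism at the cost of redoing what the paper cites; it also proves more than you state, since chaining through all four Lagrangian edges $L_{e_1e_2}, L_{e_2f_1}, L_{f_1f_2}, L_{e_1f_2}$ forces \emph{all} of $e_1,e_2,f_1,f_2$ into a single one of the two open primed cones, hence all of $\ov{\cC}\setminus\{0\}$ by convexity.

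The one thin spot is the stem--stem case, where you gesture at ``sign conditions in the primed basis'' without giving an argument; as written that step is not yet a proof, but it is easily completed and no primed-basis computation is needed. By \autoref{sssec:making_the_stem_explicit}, a stem Lagrangian $L=w_1\wedge w_2\in\cS$ has $\alpha_1\beta_1$ and $\alpha_2\beta_2$ of opposite signs, so exactly one of $w_1,w_2$ lies in the relative interior of a non-Lagrangian edge $(e_1,f_1)$ or $(e_2,f_2)$ of $\cC$ itself; by the strengthened conclusion above that vector lies in $\overset{\circ}{\cC'}\cup A'\cdot\overset{\circ}{\cC'}$, so $\phi(w_i)$, and with it $L$, misses $\cW\cS'$ by \autoref{prop:disjointness_of_photon_from_crooked_surface}. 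One caution: the pair $(e_1,f_1)$ spans the non-Lagrangian plane $S_1$ with $I(e_1,f_1)=1$, so the same-cone conclusion for it cannot come from your $I=0$ trick applied to that pair directly and must be routed through the diagonal Lagrangian edges $L_{e_1f_2}$ and $L_{e_2f_1}$, which your write-up never invokes. Your closing remark is essentially right: the touching analysis of \autoref{prop:tangency_of_crooked_surfaces} is a separate statement needed later for the unipotent configurations, and the strict inequalities in (iii) keep it out of the present proposition.
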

\begin{proof}
	Again, the equivalence of (i) and (ii) is the content of \cite[Thm. 10]{BurelleCharetteFrancoeur2021_Einstein-tori-and-crooked-surfaces}.
	The geometric interpretation in (iii) follows from \autoref{prop:disjointness_of_photon_from_crooked_surface} applied to each photon individually.
\end{proof}

Let us also recall the basic facts on the topology of the crooked surface:
\begin{proposition}[Connectivity and topology of crooked surface]
	\label{prop:connectivity_and_topology_of_crooked_surface}
	Given a crooked surface $\cW\cS$:
	\begin{enumerate}
		\item It is homeomorphic to a Klein bottle: $\cW\cS \homeo \bK^2$.
		\item Its complement $\LGr(V)\setminus \cW\cS$ has two connected components.
		The components can be labeled according to the cones $\cC$ and $A\cC$, corresponding to the photons which are contained in one component or the other.
		The components are exchanged by the anti-symplectic involution $A$.
	\end{enumerate}
\end{proposition}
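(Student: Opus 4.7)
I would tackle (ii) first, since it follows fairly directly from the tools already developed. By \autoref{prop:disjointness_of_photon_from_crooked_surface} together with the case analysis in \autoref{sssec:position_of_a_lagrangian}, every Lagrangian $L \in \LGr(V)\setminus\cW\cS$ meets the interior of at least one of the two cones $\cC$ or $A\cC$ in $\bP(V)$. Moreover, $L$ cannot meet both interiors: if $v \in L\cap\overset{\circ}{\cC}$ (all coordinates of the same sign) and $v' \in L\cap A\overset{\circ}{\cC}$ (sign pattern $(+,+,-,-)$ up to a global sign flip), then the explicit formula
\[
    I(v,v') \;=\; a_1 b_1' - a_1' b_1 + a_2 b_2' - a_2' b_2
\]
is visibly nonzero, contradicting $v,v' \in L$ Lagrangian. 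This defines a locally constant map
\[
    \chi\colon \LGr(V)\setminus\cW\cS \longrightarrow \{\cC, A\cC\}
\]
with open fibers. Each fiber is path-connected: $\chi^{-1}(\cC)$ is the image under the $L$-projection of the space $\{(v,L) : v \in L \cap \overset{\circ}{\cC}\}$, which fibers by circles $\phi(v)$ over the convex open cone $\overset{\circ}{\cC}$ and is thus path-connected. So the complement has exactly two components, and the involution $A$ exchanges them because it swaps $\cC$ with $A\cC$ while preserving $\cW\cS$ by \autoref{sssec:action_of_reflection}.

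For (i), I would build an explicit CW decomposition of $\cW\cS$. The Einstein torus $\Ein^{1,1}(S_1,S_2) \cong \bP(S_1)\times\bP(S_2) \cong T^2$ carries a natural $2\times 2$ grid: $4$ vertices $L_{e_1 e_2}, L_{e_1 f_2}, L_{f_1 e_2}, L_{f_1 f_2}$, $8$ edges along the coordinate circles, and $4$ rectangular faces. By \autoref{sssec:making_the_stem_explicit}, $\cS = \cS^+\sqcup \cS^-$ consists of two opposite rectangular faces (two open $2$-disks). The endpoint photons coincide with the coordinate circles, e.g.\ $\phi(e_1) = \{[e_1]\}\times\bP(S_2)$, while each interior photon $\phi(v_t)$, $t\in(0,1)$, meets the torus only at the single ``spine'' vertex $L_{e_1 e_2}$ for $\cW_e$, resp.\ $L_{f_1 f_2}$ for $\cW_f$; this follows at once from writing $v_t = (1-t)e_1 + te_2 = \alpha w_1 + \beta w_2$ with $w_i \in S_i$, which forces $w_i \in \bR e_i$. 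Each wing therefore contributes exactly one open $2$-cell attached to its two associated coordinate circles via a map of the form $a\cdot b^{-1}$ (with the ``horizontal'' sides collapsing to the spine). A cell count yields $\chi(\cW\cS) = 4 - 8 + 4 = 0$. To identify $\cW\cS$ with a Klein bottle rather than a torus, I would establish non-orientability by tracking the orientation of $\phi(v_t) = \bP(v_t^\perp/v_t)$ as $t$ runs over $[e_1,e_2]$: the induced orientations at the endpoints disagree with the product orientation on $\bP(S_1)\times\bP(S_2)$, so the wing's $2$-cell attaches with a reflection and produces an orientation-reversing loop. Any closed surface with $\chi=0$ is a torus or a Klein bottle, and non-orientability selects the latter.

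\textbf{The main obstacle} lies in verifying the non-orientability claim for (i), which is a delicate orientation bookkeeping problem. Concretely I would use the explicit trivialization
\[
    \phi(v_t) \;=\; \bP\bigl(\mathrm{span}(e_1,\ -tf_1+(1-t)f_2)\bigr)\subset \bP(v_t^\perp/v_t),\qquad t\in(0,1),
\]
and compare, as $t\to 0$ and $t\to 1$, the induced orientations on the limiting circles $\phi(e_1),\phi(e_2)$ with the natural orientations they receive as coordinate circles in $\bP(S_1)\times\bP(S_2)$. The resulting sign discrepancy confirms the half-twist. As a sanity check, one can also invoke the well-known fact that the closure of a crooked plane in $\Ein^{1,2}$ is a Klein bottle, due to Frances and Drumm; our explicit cell decomposition is consistent with this, and directly yields the labeling of the two complementary components by $\cC$ and $A\cC$ used in (ii).
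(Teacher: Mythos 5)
Your overall strategy is reasonable and genuinely different from the paper's: the paper proves this proposition purely by citation (\cite[Thm.~8.3.1]{BarbotCharetteDrumm2008_A-primer-on-the-21-Einstein-universe} for the Klein bottle, \cite[Thm 3.16]{CharetteFrancoeurLareau-Dussault2014_Fundamental-domains-in-the-Einstein-universe} for the separation), whereas you attempt a self-contained argument via the cones. However, there is a genuine gap at the crux of your part (ii). The claim that every $L\in\LGr(V)\setminus\cW\cS$ meets $\overset{\circ}{\cC}\cup A\overset{\circ}{\cC}$ does \emph{not} follow from \autoref{prop:disjointness_of_photon_from_crooked_surface} together with \autoref{sssec:position_of_a_lagrangian}: the former is a statement about photons, and the latter only enumerates the positions of a Lagrangian that \emph{does} meet the closed tetrahedra $\cC\cup A\cC$. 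The missing case is a Lagrangian whose projective line avoids both closed tetrahedra entirely, and such projective lines do exist in $\bP(V)$ (a generic line visits only $4$ of the $8$ open coordinate tetrahedra, and combinatorially it can miss both $\cC$ and $A\cC$). What saves you is the Lagrangian condition, but this must be proved: writing $L$ as the graph of a map $\phi\colon\operatorname{span}(e_1,e_2)\to\operatorname{span}(f_1,f_2)$, the Lagrangian condition forces $\phi$ symmetric, and one checks that $\{a_1a_2>0\}\cap\{b_1b_2>0\}\cap L=\emptyset$ forces $\phi$ diagonal with eigenvalues of opposite sign, i.e. $L\in\cS$; the non-transverse cases ($L$ meeting $\operatorname{span}(e_1,e_2)$ or $\operatorname{span}(f_1,f_2)$) land on a wing. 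Without some such argument your map $\chi$ is not known to be defined on all of the complement, and the count of components collapses. (The rest of (ii) — that $L$ cannot meet both open cones because all four terms of $I(v,v')$ then have the same sign, and the connectedness of each fiber via the photon fibration over the contractible open cone — is correct.)

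Part (i) is an outline rather than a proof. Your cell decomposition is correct as far as it goes (the stem is two opposite open faces of the $2\times2$ grid on $\Ein^{1,1}(S_1,S_2)$, each wing is one $2$-cell whose interior photons meet the torus only at the spine vertex, and $\chi=0$), but two essential steps are missing: you must verify that $\cW\cS$ is actually a closed $2$-manifold (each edge of the grid lies on exactly two of the four faces, and the link of each vertex is a circle) before the classification of surfaces with $\chi=0$ applies, and you explicitly defer the non-orientability computation, which is the step that distinguishes the Klein bottle from the torus. As written, (i) ultimately rests on the same external fact the paper cites. If you want a self-contained proof, carry out the orientation bookkeeping you sketch (or exhibit an explicit orientation-reversing loop, e.g. a path crossing a wing from $\cS^+$ to $\cS^-$ combined with the fact that $A$ exchanges $\cS^{\pm}$ while fixing the wings' photons); otherwise it is cleaner to simply quote the references, as the paper does.
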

\begin{proof}
	That crooked surfaces are homeomorphic to Klein bottles is \cite[Thm.~8.3.1]{BarbotCharetteDrumm2008_A-primer-on-the-21-Einstein-universe}.
	That a crooked surface disconnects $\LGr(V)$ is proved in \cite[Thm 3.16]{CharetteFrancoeurLareau-Dussault2014_Fundamental-domains-in-the-Einstein-universe}, and that the anti-symplectic involution exchanges the two components follows immediately as well.
\end{proof}

In our geometric applications, a ``touching'' of crooked surfaces occurs, because the cones can intersect along edges or faces.
This situation is handled in \autoref{prop:tangency_of_crooked_surfaces} below, and we need some preliminaries on Einstein tori.

\subsubsection{Einstein tori}
	\label{sssec:einstein_tori}
Recall that associated to a symplectic-orthogonal splitting $V=S_1\oplus S_2$ with symplectically non-degenerate summands, we defined an Einstein torus $\Ein^{1,1}(S_1,S_2)$ in \autoref{eqn:Einstein_torus_definition}.
As a real projective algebraic manifold it is naturally isomorphic to a product of two projective lines $\bP(S_1)\times \bP(S_2)$, since it is also a quadric in the projectivization of a space of signature $(2,2)$ (see also \cite[\S5.3]{BarbotCharetteDrumm2008_A-primer-on-the-21-Einstein-universe}).
Note that the Einstein torus embedded in $\bP(W)$ is given as the intersection of the orthogonal complement of a negative-definite vector with the null quadric (i.e. $\LGr(V)$).
Furthermore the torus is equipped with a natural conformal class of Lorenz metric, for which the light rays are fibers of the projection to one coordinate $\bP^1$-factor.

Suppose given now two Einstein tori $E,E'\subset \LGr(V)$.
Then the intersection $E\cap E'$ viewed as a subset of $E=\bP^1\times \bP^1$ is a $(1,1)$-curve, i.e. cut out by a homogeneous equation of bi-degree $(1,1)$ in each of the homogeneous coordinates on $\bP^1\times \bP^1$.
Three possibilities can occur for $E\cap E'$ inside $E$ (see \cite[\S3]{BurelleCharetteFrancoeur2021_Einstein-tori-and-crooked-surfaces}): it can be a timelike curve, it can be a spacelike curve, or it can be the union of two intersecting light rays.
In the first two cases the intersection projects isomorphically to any of the $\bP^1$-factors, in the last case each light ray projects isomorphically to a corresponding $\bP^1$-factor.

Below is a criterion for when crooked surfaces can touch.
Recall that if $p,q$ are nonzero vectors then $[p,q]$ denotes the closed projective interval of their positive linear combinations  (see \autoref{eqn:edge_of_tetrahedron_def}) and we will denote by $(p,q)$ the open interval where both coefficients are strictly positive.

\begin{proposition}[Tangency of crooked surfaces]
	\label{prop:tangency_of_crooked_surfaces}
	Let $\cC,\cC'$ be cones determining crooked surfaces $\cW\cS,\cW\cS'$, with notation as in \autoref{prop:disjointness_of_crooked_surfaces}.
	\begin{enumerate}
		\item Suppose that $[e_1,e_2]=[e_1',e_2']$ and $f_1',f_2'$ belong to the interior $\cC$.
		Then
		\[
			\cW\cS\cap \cW\cS' = \cW_e \text{ which also equals }\cW_e'
		\]
		i.e. the surfaces intersect along the $e$-wing but nowhere else.
		\item Suppose that we have:
		\begin{align*}
			\begin{split}
			f_1' & = f_1\\
			e_2' & = e_2 + f_1
			\end{split}
			\begin{split}
			f_2' & = f_2+e_2+\tfrac12 f_1\\
			e_1' & = e_1 + f_2 + \tfrac12 e_2 + \tfrac16 f_1
			\end{split}
		\end{align*}
		Then:
		\[
			\cW\cS\cap \cW\cS' = \phi(f_1)\text{ which also equals }\phi(f_1')
		\]
		i.e. the surfaces intersect along a photon on their $f$-wings but nowhere else.
	\end{enumerate}
\end{proposition}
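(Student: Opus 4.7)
My plan is to prove both parts by decomposing each crooked surface as its two wings and its stem, and analyzing the resulting nine pairwise intersections using the disjointness criteria of \autoref{prop:disjointness_of_photon_from_crooked_surface} and \autoref{prop:disjointness_of_crooked_surfaces} where they strictly apply, together with direct computations in the boundary tangency cases where they do not. In both parts the containment $\supset$ in the claimed equality is immediate from the setup, so the real work is to establish $\subset$.

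For part (i), the hypothesis $[e_1,e_2]=[e_1',e_2']$ gives $\cW_e=\cW_e'$, hence $\cW_e\subset\cW\cS\cap\cW\cS'$. For the reverse, suppose $L\in\cW\cS\cap\cW\cS'$ with $L\notin\cW_e$, so that $L\in\cW_f\cup\cS$. If $L$ contains a vector $v=sf_1+tf_2$ with $s,t>0$, the symplectic normalization $I(e_i',f_j')=\delta_{ij}$ combined with $f_1',f_2'\in\overset{\circ}{\cC}$ forces the primed coordinates of $v$ to satisfy $\alpha_1'\alpha_2'>0$ and $\beta_1'\beta_2'>0$, placing $v$ in $A'\overset{\circ}{\cC'}$; by \autoref{prop:disjointness_of_photon_from_crooked_surface}, $\phi(v)$ is then disjoint from $\cW\cS'$, contradicting $L\in\cW\cS'$. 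The endpoint photons $\phi(f_1),\phi(f_2)$ lie only on the boundary of $A'\cC'$ and must be treated directly: parameterizing Lagrangians in $\phi(f_1)$ as $\mathrm{span}(f_1,ae_2+bf_2)$, case analysis shows that membership in any piece of $\cW\cS'$ either forces $b=0$ (giving $L=L_{e_2f_1}\in\cW_e$) or produces an inconsistency. The stem case $L\in\cS$ is handled analogously via the parametrization $\mathrm{span}(w_1,w_2)$ with $w_i\in S_i$; the sign constraints coming from the Maslov conditions of \autoref{sssec:making_the_stem_explicit} are shown to be incompatible with the equations defining $L\in\cW_f'\cup\cS'$.

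For part (ii), the transition is the exponential of the nilpotent $N$ acting by $e_1\mapsto f_2\mapsto e_2\mapsto f_1\mapsto 0$, whose unique kernel-line is $[f_1]=[f_1']$. This yields $\phi(f_1)=\phi(f_1')\subset\cW\cS\cap\cW\cS'$. For the converse, I would verify with the given coefficients that each of the six remaining cone vertices $e_1,e_2,f_2$ and $e_1',e_2',f_2'$ lies in the open region $\overset{\circ}{\cC'}\cup A'\overset{\circ}{\cC'}$ (respectively $\overset{\circ}{\cC}\cup A\overset{\circ}{\cC}$); \autoref{prop:disjointness_of_photon_from_crooked_surface} then eliminates the corresponding photons from the opposite crooked surface, killing all wing--wing contributions except along $\phi(f_1)$. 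For the remaining stem--stem and wing--stem intersections I would turn to the Einstein-torus framework of \autoref{sssec:einstein_tori}: $E\cap E'$ with $E=\Ein^{1,1}(S_1,S_2)$ and $E'=\Ein^{1,1}(S_1',S_2')$ is a $(1,1)$-curve on each torus, and the explicit unipotent is used to locate this curve relative to the two stems and confirm that the residual intersection is empty.

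The main obstacle will be the boundary bookkeeping rather than the generic disjointness. The criteria of \cite{BurelleCharetteFrancoeur2021_Einstein-tori-and-crooked-surfaces} only provide strict interior/exterior disjointness, while in both parts the tangency locus sits precisely on the boundary of the relevant cones, so the criteria degenerate there and a direct parameterization of Lagrangians is needed. In part (ii) the factorial coefficients $1,\tfrac12,\tfrac16$ are engineered by the exponential structure to make every strict-positivity check go through cleanly; the most delicate remaining step is the Einstein-torus analysis showing that the stems avoid each other outside the shared photon $\phi(f_1)$.
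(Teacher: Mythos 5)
Your plan for part~(i) is broadly in the spirit of the paper's argument (photon disjointness plus an Einstein-torus / stem analysis), but part~(ii) contains a concrete error that would derail the proof. You assert that after fixing $\phi(f_1)$, \emph{all six} remaining vertices $e_1,e_2,f_2,e_1',e_2',f_2'$ land in the open sets $\overset{\circ}{\cC'}\cup A'\overset{\circ}{\cC'}$, resp.\ $\overset{\circ}{\cC}\cup A\overset{\circ}{\cC}$, so that \autoref{prop:disjointness_of_photon_from_crooked_surface} eliminates all wing contributions at once, and you even say the coefficients $1,\tfrac12,\tfrac16$ make ``every strict-positivity check go through cleanly.'' This is false. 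Writing the primed vertices in the unprimed basis:
\begin{align*}
e_2' &= e_2 + f_1 &&\leadsto (\alpha_1,\alpha_2,\beta_1,\beta_2)=(0,1,1,0),\\
f_2' &= e_2+\tfrac12 f_1 + f_2 &&\leadsto (\alpha_1,\alpha_2,\beta_1,\beta_2)=(0,1,\tfrac12,1),
\end{align*}
so in both cases $\alpha_1\alpha_2=0$: neither vector lies in the open region, and by the criterion the photons $\phi(e_2')$ and those in $\cW_f'$ genuinely \emph{do} meet $\cW\cS$. (Inverting the unipotent, the same degeneracy occurs for $e_2,f_2$ in primed coordinates: $e_2$ has primed $\alpha$-coordinates $(0,1)$ and $f_2$ has primed $\alpha$-coordinates $(0,-1)$.) Only $e_1,e_1'$ pass the strict test. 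So your ``kill all wing--wing contributions'' step fails precisely on the vertices that matter.

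The paper's proof is built around this obstruction rather than avoiding it: it shows that the photons in the half-open range $[e_1',e_2')$ of $\cW_e'$ are disjoint from $\cW\cS$, that the exceptional endpoint photon $\phi(e_2')$ meets $\cW\cS$ in exactly one Lagrangian $L_{e_2f_1}$ (which lies on $\phi(f_1)$ and hence is harmless), and that each photon of $\cW_f'$ meets $\cW\cS$ in exactly one Lagrangian $L_{f_1f_2'}\in\phi(f_1)$. These pointwise intersection computations are not optional refinements but the content of the proof on the boundary, and they are missing from your plan. The Einstein-torus step you sketch (locating $E\cap E'=\phi(f_1)\cup\phi(v)$ with $v=f_2+\tfrac12 e_2$ and ruling out $\phi(v)\cap\cS\cap\cS'$ by a Maslov sign check) does match the paper, but it only treats the stems; the wing boundary photons must still be tracked individually. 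You do flag ``boundary bookkeeping'' as the obstacle early on, but the concluding claim that the exponential coefficients render all strict checks clean contradicts that and is incorrect.

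For part~(i) your decomposition is essentially the paper's: $\cW_f$ and $\cW_f'$ are handled by the photon criterion, and the stems need a separate argument. The paper finishes the stem case by observing $E\cap E'=\phi(e_1)\cup\phi(e_2)$ (the two photons joining the $e$-wing to the stem) and then noting the stems live in the complement of those photons inside their tori; this is cleaner than the ad hoc Maslov-sign parametrization you propose, though your route is also workable if carried out carefully. Overall, the idea is right; the gap is the unjustified (and false) claim that all six photon vertices in part~(ii) fall into the strict disjointness regime.
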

\noindent The formulas in case (ii) arise when the cone $\cC'$ is the image of $\cC$ under a maximally unipotent symplectic matrix, which preserves the flag $f_1\subseteq L_{f_1e_2}\subseteq f_1^{\perp}\subseteq V$.
Case (i) arises when $\cC'$ is the image of $\cC$ under a rank $1$ symplectic unipotent matrix.
\begin{proof}
	For both cases, it is immediate that the stated sets are in the intersection.
	We must check that no intersections occur elsewhere.

	Consider case (i).
	First, let us see that $\cW_f$ is disjoint from $\cW\cS'$, and similarly for $\cW_f'$ and $\cW\cS$.
	Indeed the vectors in $V$ spanning the photons are assumed in the interior of a cone, and so are the segments connecting them, so the disjointness of an $f$-wing from the (other) crooked surface follows by \autoref{prop:disjointness_of_photon_from_crooked_surface}.

	To see that the stems $\cS,\cS'$ don't intersect either, let $E,E'$ be the Einstein tori containing them.
	By the discussion in \autoref{sssec:einstein_tori} we see that $E\cap E'=\phi(e_1)\cup \phi(e_2)$.
	Indeed the two photons are clearly in the intersection (they give the joining places of the $e$-wings to the stem), and since the tori are distinct they account for all the intersection points.
	Since the stems are in the complement of these ``joining'' photons, their disjointness follows.

	Consider now case (ii).
	The photons in the wing $\cW_e'$ don't intersect $\cW\cS$ in the range $[e_1',e_2')$ by the criterion of \autoref{prop:disjointness_of_photon_from_crooked_surface}.
	The photon $\phi(e_2')$ intersects $\cW\cS$ at the Lagrangian $L_{e_2f_1}$ and note except for this one point of intersection, this photon lies in the component of $\LGr(V)\setminus \cW\cS$ corresponding to $\cC$ (by an arbitrarily small perturbation it can be ``pushed'' to be entirely in the interior of that components).

	The photons in the wing $\cW_f'$ intersect $\cW\cS$ at the Lagrangian $L_{f_1'f_2'}=L_{f_1f_2'}\in \phi(f_1)\subset \cW_f$, but nowhere else and lie, except for this one point of intersection, in the component corresponding to $\cC$ in $\LGr(V)\setminus \cW\cS$ (again, a small push takes them to the interior).

	Finally, the Einstein tori $E,E'$ containing the stems intersect in two photons $E\cap E'=\phi(f_1)\cup \phi(v)$ where $v=f_2+\tfrac12 e_2$ is the point of intersection between the line $f_2'e_2'$ and the segment $[f_2,e_2]$.
	Let us check that the piece of the photon $\phi(v)$ that belongs to the stem $\cS'$ is not in the stem $\cS$.
	Indeed, that piece consists of Lagrangians of the form $L_{vv'}$ where $v'=\alpha_1'e_1'+\beta_1'f_1'$ with $\alpha_1'\cdot\beta_1'>0$, since $v$ lies outside the segment $[f_2',e_2']$.
	This Lagrangian will intersect the subspace $S_1$ spanned by $e_1,f_1$ at the point $\alpha_1'(e_1+\tfrac16 f_1) + \beta_1'f_1$, which can be checked directly from the formulas for $v,e_1',f_1'$.
	But this intersection point has both coordinates positive with respect to $e_1,f_1$, and since $v$ also has both coordinates positive with respect to $e_2,f_2$, it follows that this Lagrangian is not in $\cS$ (see \autoref{sssec:making_the_stem_explicit}).
\end{proof}

\begin{corollary}[Cutting along crooked surfaces]
	\label{cor:cutting_along_crooked_surfaces}
	Suppose that two crooked surfaces $\cW\cS,\cW\cS'$ are either in the configuration of \autoref{prop:disjointness_of_crooked_surfaces}, i.e. disjoint, or in one of the configurations in \autoref{prop:tangency_of_crooked_surfaces}.

	Then $\cW\cS'$ is entirely contained in one component of $\LGr(V)\setminus \cW\cS$ in the first case, or contained in a component except for a set of photons along which in intersects $\cW\cS$ in the second case.
\end{corollary}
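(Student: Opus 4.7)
The plan is to combine two ingredients. First, \autoref{prop:connectivity_and_topology_of_crooked_surface} says that $\LGr(V)\setminus\cW\cS$ has exactly two connected components, so any connected subset of this complement lies entirely in one of them. Second, I will show that the residual set $\cW\cS'\setminus(\cW\cS\cap\cW\cS')$ is connected and contained in that complement. The corollary follows at once by applying the first fact to the second.

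The disjoint case of \autoref{prop:disjointness_of_crooked_surfaces} requires no further work: $\cW\cS'$ is homeomorphic to a Klein bottle by \autoref{prop:connectivity_and_topology_of_crooked_surface}, hence connected, and by hypothesis disjoint from $\cW\cS$. For the tangent cases of \autoref{prop:tangency_of_crooked_surfaces}, I would set $I:=\cW\cS\cap\cW\cS'$ and check connectedness of $\cW\cS'\setminus I$ case-by-case. In case (i), $I=\cW_e'$ is an entire wing and the residual $\cS'\cup\cW_f'$ consists of the opposite wing---a circle bundle over the interval $[f_1',f_2']$, hence connected---together with the open stem; since each stem component $\cS^{\pm}$ accumulates on the boundary photons $\phi(f_1'),\phi(f_2')\subset\cW_f'$, the union is connected. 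In case (ii), $I=\phi(f_1')$ is a single boundary photon of the $f$-wing of $\cW\cS'$; removing it leaves a ``half-open'' wing $\cW_f'\setminus\phi(f_1')$ which is still connected and still attached to the stem along the other boundary photon $\phi(f_2')$, and through the stem to the $e$-wing along $\phi(e_1'),\phi(e_2')$. Thus the residual surface is connected.

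The only delicate point is the connectivity in case (ii): one must confirm that the boundary photon $\phi(f_1')$ is non-separating in the Klein bottle $\cW\cS'$, which I would handle by the explicit path-description above through the wing-stem-wing decomposition (rather than by any general topological classification). Once connectivity is established in each situation, containment of $\cW\cS'\setminus I$ in a single component of $\LGr(V)\setminus\cW\cS$ is immediate from the two-component decomposition provided by \autoref{prop:connectivity_and_topology_of_crooked_surface}, which is exactly the statement of the corollary.
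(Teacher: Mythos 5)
Your proposal is correct and is essentially the argument the paper leaves implicit (the corollary is stated without proof as an immediate consequence of \autoref{prop:disjointness_of_crooked_surfaces}, \autoref{prop:tangency_of_crooked_surfaces} and the two-component statement in \autoref{prop:connectivity_and_topology_of_crooked_surface}): connectedness of $\cW\cS'$ minus the intersection locus, combined with the fact that the complement of $\cW\cS$ has exactly two components. The one assertion that deserves a word of care is the connectivity of $\cW_f'\setminus\phi(f_1')$ in case (ii) --- since every photon of the wing passes through the single Lagrangian $L_{f_1'f_2'}\in\phi(f_1')$, this set is a disjoint family of arcs $\phi(l)\setminus\{L_{f_1'f_2'}\}$, $l\in(f_1',f_2']$ --- but it is indeed connected as a subspace (the arcs accumulate on one another; equivalently it is the continuous image of the connected set $(0,1]\times(S^1\setminus\{*\})$ under the parametrization of the wing), so your conclusion stands.
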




\subsection{Domain of discontinuity}
	\label{ssec:domain_of_discontinuity}

\subsubsection{Setup}
	\label{sssec:setup_domain_of_discontinuity}
We now apply the preceding formalism of crooked surfaces to analyze the domains of discontinuity for the groups $\tilde{\Gamma}_N$ from \autoref{thm:ping_pong_property_of_cones}, using the cones constructed in \autoref{sec:cones_and_ping_pong}.

Let $I:=\{0,0',\ldots, (N-1),(N-1)'\}$ denote the indexing set for the cones.
Given $i\in I$, we have a cone $\cC_i\subset \bP(V)$, and an associated reflection $A_i\in \GSp(V)$.
In the Lagrangian Grassmannian we then obtain a crooked surface $\cW\cS_i\subset \LGr$, and its complement decomposes into two open sets
\[
	\LGr(V)\setminus \cW\cS_i = \cL_{i,s}\coprod \cL_{i,b}
\]
where the ``small'' open set $\cL_{i,s}$ is associated to the component determined by the cone $\cC_i$, and the ``big'' open set $\cL_{i,b}$ is associated to the component determined by the cone $A_i\cdot \cC_i$.
The reflection $A_i$ preserves $\cW\cS_i$ as a set, and exchanges the two components $\cL_{i,\bullet}$.
We will denote by $\ov{\cL}$ the closure of a component (so $\ov{\cL}=\cL\coprod \cW\cS$).

\subsubsection{Left and right adjacency}
	\label{sssec:left_and_right_adjacency}
Recall that the indexing set $I$ for the cones is cyclically ordered.
Let then $r(i)$, resp. $l(i)$, denote the right, resp. left, neighbors of the element $i$.
We will also use the composition of reflections:
\[
	T_{i,r}:= A_i  A_{r(i)} \quad T_{i,l} := A_i  A_{l(i)}
\]
which for adjacent vertices satisfy $T_{i,r}\cdot T_{r(i),l}=1$ and $T_{i,l}\cdot T_{l(i),r}=1$.
Note that each $T_{i,l/r}$ is a unipotent transformation taking the cone $\cC_i$ to itself, and one of the matrices is a rank $1$ unipotent while the other is maximally unipotent.

\subsubsection{Finite approximations to limit set and domain of discontinuity}
	\label{sssec:finite_approximations_to_limit_set_and_domain_of_discontinuity}
We can now combine the calculations with containments of cones from \autoref{thm:ping_pong_property_of_cones} with the disjointness/touching criteria from \autoref{prop:disjointness_of_crooked_surfaces} and \autoref{prop:tangency_of_crooked_surfaces}.
It follows that when $i\neq j$, we have that $\cW\cS_{j}\subset \ov{\cL_{i,b}}$ and more generally $\ov{\cL_{j,s}}\subset \ov{{\cL_{i,b}}}$.

Let us define
\[
	\Lambda_{1}:=\bigcup_{i\in I}\ov{\cL_{i,s}}
	\quad
	\text{ and }
	\quad
	\Omega_{1}:= \bigcap_{i\in I}\cL_{i,b} = \LGr(V)\setminus \Lambda_1
\]
These provide a first approximation to the domain of discontinuity $\Omega$ and limit set $\Lambda$.
We can define $\Lambda_n$ and $\Omega_n:=\LGr(V)\setminus \Lambda_n$ recursively, or in a more direct manner:
\begin{align}
	\label{eqn:Lambda_n_Omega_n_def}
	\begin{split}
	\Lambda_n & := \{x\in \LGr(V) \colon \exists i_1,\ldots, i_n \in I \text{ s.t. } i_{l}\neq i_{l+1}\\
	& \qquad \text{ and } x_{i_1}\in \ov{\cL_{i_1,s}} \text{ s.t. }
	x = A_{i_n}\cdots A_{i_2}\cdot x_{i_1} \}\\
	\Omega_n & := \{x\in \LGr(V) \colon \forall i_1,\ldots, i_n \in I \text{ s.t. } i_{l}\neq i_{l+1}\\
	& \qquad \text{ we have that } A_{i_2}\cdots A_{i_n}\cdot x \notin  \ov{\cL_{i_1,s}} \}
	\end{split}
\end{align}
It is immediate from the definitions that $\Omega_n=\LGr(V)\setminus \Lambda_n$, and that $\Lambda_n$ is closed (resp. $\Omega_n$ is open).
Let us point out that the sequence $i_1,\ldots,i_n$ which certifies that $x\in\Lambda_n$ need not be uniquely associated to $x$.

We also have that $\Lambda_{n+1}\subset \Lambda_n$ since if $x\in \Lambda_{n+1}$ with $x = A_{i_{n+1}}\cdots A_{i_2}\cdot x_{i_1}$ then we can also use $x_{i_2}:=A_{i_2}x_{i_1}$ and the last $n$ terms of the sequence, to see that $x\in \Lambda_n$, since $A_{i_2}\cdot \ov{\cL_{i_1,s}}\subset \ov{\cL_{i_2,s}}$.
Similarly note that $\Omega_{n+1}\supset \Omega_n$, since if $x\notin \Omega_{n+1}$ then there exists a sequence $i_1,\ldots, i_{n+1}$ with $A_{i_2}A_{i_3}\cdots A_{i_{n+1}}x\in \ov{\cL_{i_1,s}}$, but then $A_{i_3}\cdots A_{i_{n+1}}x\in A_{i_2}\ov{\cL_{i_1,s}}\subset \ov{\cL_{i_2,s}}$ showing that $x\notin \Omega_n$ either.

\subsubsection{A preliminary domain of discontinuity}
	\label{sssec:a_preliminary_domain_of_discontinuity}
We can define now the sets
\[
	\Lambda^{\circ}:= \bigcap_{n\geq 1} \Lambda_n \text{ and its complement }\Omega^{\circ}:=\bigcup_{n\geq 1}\Omega_n.
\]
By construction $\Lambda^{\circ}$ is closed and $\Omega^{\circ}$ is open, and both sets are $\tilde{\Gamma}_N$-invariant.
We will see in \autoref{sssec:enlarging_the_domain_of_discontinuity} below that $\Omega^{\circ}$ can be slightly enlarged to a bigger $\tilde{\Gamma}_N$-invariant set, while its complement $\Lambda^{\circ}$ can be slightly enlarged.

\subsubsection{Boundary of the fundamental domain}
	\label{sssec:boundary_of_the_fundamental_domain}
If we denote by $\ov{\Omega}_1^{rel}\subset \Omega^{\circ}$ the relative closure of the first domain $\Omega_1$, then it is immediate to check from the properties of the action, and the definitions, that the $\tilde{\Gamma}_N$-orbit of any point intersects $\ov{\Omega}_1^{rel}$.

Let us further analyze the boundary of this fundamental domain.
We have the following set-theoretic calculations:
\begin{align*}
	\Omega_2 \setminus \left(\Omega_1 \cup \bigcup_{i\in I}A_i \cdot \Omega_1\right) & = \Omega_1 \cap \left(\Lambda_1 \cap \bigcap_{i\in I}A_i\cdot \Lambda_1  \right)\\
	& = \Omega_2 \cap \bigcap_{i\in I}\cW\cS_i.
\end{align*}
Consider now a point $x\in \Omega_{2}\cap \cW\cS_{i_0}$.
The assumption $x\in \Omega_2$ is equivalent to the statement that for any $i_1\neq i_2$ we have that $A_{i_2}x\notin \ov{\cL_{i_1,s}}$.
Observe that if $i_2$ is not adjacent to $i_0$ then this is automatic since the reflection $A_{i_2}$ will map $\cL_{i_0,s}$ strictly inside $\cL_{i_2,s}$, and so the same will remain true of the boundary $\cW\cS_{i_0}$.

So we have to consider the cases $i_2\in\{r(i_0),l(i_0)\}$.
By an analogous reasoning, if $i_1\neq i_0$ then we have $A_{i_2}\cW\cS_{i_0}\cap \ov{\cL_{i_1,s}}=\emptyset$, so we have to consider only the case $i_1=i_0$.
A point $x\in \Omega_2\cap \cW\cS_{i_0}$ is characterized by
\[
	A_{r(i_0)}x\notin \ov{\cL_{i_0,s}} \text{ and } A_{l(i_0)}x\notin \ov{\cL_{i_0,s}}
\]
which, by applying $A_{i_0}$ to both sides, and using the notation from \autoref{sssec:left_and_right_adjacency}, is equivalent to
\[
	T_{i_0,r}x\notin \ov{\cL_{i_0,b}} \text{ and }
	T_{i_0,l}x\notin \ov{\cL_{i_0,b}}
\]
In other words we have
\[
	\Omega_2\cap \cW\cS_{i_0} = \cW\cS_{i_0}\setminus \left(T_{i_0,r}\cdot \cW\cS_{i_0}\cup T_{i_0,l}\cdot \cW\cS_{i_0}\right)
\]
In other words, we must eliminate the intersections of the original crooked surface with its translates by two unipotent transformations.
These are precisely the sets described in \autoref{prop:tangency_of_crooked_surfaces}: one intersection is along a full wing of the surfaces, while another is along a single photon.

\subsubsection{Action of reflections on a wing}
	\label{sssec:action_of_reflections_on_a_wing}
It follows from the previous analysis that the sets $\Lambda_n$ will contain full wings of adjacent crooked surfaces, along which the sources ``touch'' in the sense of \autoref{prop:tangency_of_crooked_surfaces}.
Let us analyze now the dynamics of the two reflections which fix, as a set, the particular wing.
Up to conjugacy, the model is that of the group generated by the matrices $A,B$ from \autoref{sssec:the_reflection_matrices}.
Their product $AB=(A-B)B + 1$ is a rank $1$ unipotent matrix, since $A-B$ is visibly a rank $1$ matrix.

Recall also that the cone is given in \autoref{eqn:fundamental_cone_definition} and its column vectors are (up to scaling) what we called $e_1,f_1,e_2,f_2$.
Then the geometry is as follows.
The group generated by $A,B$ preserves as a set each photon $\phi(v)$ for $v=\alpha_1e_1 + \alpha_2 e_2$.
The two reflections fix the Lagrangian $L_{e_1e_2}$, which lies on each of the photons in question.
Identifying $\phi(v)=\bP(v^{\perp}/v)\isom \bP^1(\bR)$, and removing ``the point at infinity'' $L_{e_1e_2}$, the action of $A,B$ then becomes that of two Euclidean reflections on $\bR$.
Under increasingly longer words in $A,B$ the orbit of a point approaches the point at infinity $L_{e_1e_2}$.

The above description holds except for a photon $\phi(v_0)$, where $v_0$ is the image of $AB-\id$.
The action of the group generated by $A,B$ is trivial on this photon, and the vector $v_0$ is the ``attractor'' for the projective action of large powers of $AB$.
Let us call $\phi(v_0)\subset \cW_e$ the ``attractor photon'' on the corresponding $e$-wing.

\subsubsection{Enlarging the domain of discontinuity}
	\label{sssec:enlarging_the_domain_of_discontinuity}
We can now enlarge our open set $\Omega^{\circ}$ to a larger domain of discontinuity, as follows.
Enlarge $\Omega_2$ by adding, for each index $i\in I$, the complement in the $e$-wing of the ``attractor photon''.
Then, take the image of $\Omega_2$ under the group $\tilde{\Gamma}_N$ and call the resulting set $\Omega$, with complement $\Lambda$.

This construction is equivalent to removing from the limit set $\Lambda_2$ the $e$-wings, except for the attractor photons, and then taking successive images and intersecting as in \autoref{sssec:finite_approximations_to_limit_set_and_domain_of_discontinuity}.

Let us finally remark that the limit set $\Lambda$ intersects each crooked surface $\cW\cS_i$ in two photons only, namely the $f$-vertex photon which is the attractor for the maximally unipotent matrix, and another photon attractor for the rank $1$ unipotent transformation on the $e$-wing.

\begin{theorem}[Proper discontinuity]
	\label{thm:proper_discontinuity}
	The action of $\tilde{\Gamma}_N$ on the open sets $\Omega^{\circ}$ and $\Omega$ in $\LGr(V)$ is properly discontinuous.
\end{theorem}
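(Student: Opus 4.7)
The plan is to prove proper discontinuity by a ping-pong contradiction argument, first for $\Omega^\circ$ and then with a small refinement for $\Omega$. Let $K \subset \Omega^\circ$ be compact. Since $\Omega^\circ = \bigcup_n \Omega_n$ is an ascending union of open sets, compactness gives some $n_0$ with $K \subset \Omega_{n_0}$.

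The central lemma is a contraction statement: for any reduced word $g = A_{i_m} \cdots A_{i_1}$ of length $m \geq n_0$ and any $x \in \Omega_{n_0}$, one has $gx \in \ov{\cL_{i_m, s}}$. To see this, split $g = g' w$ where $w = A_{i_{n_0-1}} \cdots A_{i_1}$ has length $n_0-1$. The defining property of $\Omega_{n_0}$ applied to the reduced sequence $(i_{n_0}, i_{n_0-1}, \ldots, i_1)$ gives $w x \notin \ov{\cL_{i_{n_0}, s}}$, so $wx \in \cL_{i_{n_0}, b}$, and hence $A_{i_{n_0}} wx \in \cL_{i_{n_0}, s}$. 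Each subsequent reflection $A_{i_j}$ then preserves the ``small side'' via the basic ping-pong inclusion $A_j \ov{\cL_{k, s}} \subset \ov{\cL_{j, s}}$ (valid for $j \neq k$), which follows from the cone containments of \autoref{thm:ping_pong_property_of_cones} combined with \autoref{prop:disjointness_of_crooked_surfaces}.

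Now assume for contradiction that $g_k K \cap K \neq \emptyset$ for a sequence of distinct $g_k$, and choose $x_k \in K$ with $y_k := g_k x_k \in K$. Because the number of reduced words of each fixed length is finite, the lengths of $g_k$ tend to infinity. A diagonal extraction on the leftmost letters of $g_k$ produces a subsequence together with an infinite reduced sequence $p_1, p_2, \ldots$ such that, for every $j$, the initial segment of $g_k$ is $A_{p_1} \cdots A_{p_j}$ for all sufficiently large $k$. Writing $g_k = A_{p_1} \cdots A_{p_j}\, g_k^{(j)}$ and applying the contraction lemma to $g_k^{(j)}$ yields $y_k \in A_{p_1} \cdots A_{p_{j-1}} \ov{\cL_{p_j, s}}$ for each $j$. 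Passing to a further subsequence with $y_k \to y_\infty \in K$, the point $y_\infty$ lies in every nested closed set $A_{p_1} \cdots A_{p_{j-1}} \ov{\cL_{p_j, s}}$, hence in $\Lambda^\circ$, contradicting $y_\infty \in K \subset \Omega^\circ$.

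For $\Omega$ the argument is identical, but one must now show that the intersection of the shrinking sets lies in the smaller set $\Lambda = \LGr(V) \setminus \Omega$ rather than just in $\Lambda^\circ$. If the tail of $(p_j)$ is eventually periodic of period two, the limit is pinned to a wing shared by two adjacent crooked surfaces, and its location is governed by the dynamics of one of the unipotents $T_{i, l/r}$, whose attractor is precisely the attractor photon retained in $\Lambda$ by the construction of \autoref{sssec:enlarging_the_domain_of_discontinuity}; in all other cases the nested sets contract to a single vertex photon, which also lies in $\Lambda$. I expect the main obstacle to be promoting the set-theoretic ping-pong containments to a genuine metric contraction: one needs the nested sets $A_{p_1} \cdots A_{p_{j-1}} \ov{\cL_{p_j, s}}$ to have diameters shrinking to zero, which requires using the strict positivity of the entries established by the explicit calculations of \autoref{sec:computations_with_rotated_vectors} (so that cone interiors map strictly into cone interiors), while handling separately the unipotent corners where the contraction degenerates along a single direction.
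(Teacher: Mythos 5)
Your argument for $\Omega^{\circ}$ is correct and is essentially the paper's intended proof made precise: where the paper only asserts that orbits ``can be traced combinatorially through the open sets $\Omega_n$'', your contraction lemma (reduced words of length $\geq n_0$ send $\Omega_{n_0}$ into $\ov{\cL_{i_m,s}}$) together with the diagonal extraction and nested closed sets is a clean implementation. Two remarks. First, you should state the (easy) reductions you are using implicitly: it suffices to treat the finite-index subgroup generated by the $A_i$, $i\in I$, and distinct elements of that subgroup require reduced words of unbounded length because there are only finitely many reduced words of each length. Second, your closing worry about promoting the set-theoretic containments to a metric contraction is unnecessary for $\Omega^{\circ}$: since $A_{p_1}\cdots A_{p_{j-1}}\ov{\cL_{p_j,s}}\subset \Lambda_j$, the limit point automatically lies in $\bigcap_j \Lambda_j=\Lambda^{\circ}$, which is disjoint from $\Omega^{\circ}$, and no shrinking of diameters is needed.

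The treatment of $\Omega$ has a genuine gap. The opening step of your scheme --- compactness of $K$ gives $K\subset\Omega_{n_0}$ --- fails for $\Omega$: the points added in \autoref{sssec:enlarging_the_domain_of_discontinuity} (the shared $e$-wings minus their attractor photons, and their images) lie in $\Lambda_n$ for \emph{every} $n$, because such a point belongs to $\cW\cS_{i}\subset\ov{\cL_{i,s}}$ and the alternating words in the two reflections preserving that wing keep it on the wing; hence a compact set meeting a wing is contained in no $\Omega_{n_0}$, and the hypothesis of your contraction lemma is never available for those $x_k$. Your proposed modification addresses a different, downstream issue (whether the limit of the nested sets lands in $\Lambda$ rather than merely in $\Lambda^{\circ}$), not this one; the claim ``for $\Omega$ the argument is identical'' is therefore not correct. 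What is actually required --- and what the paper does --- is a separate analysis of the infinite dihedral subgroup $\langle A_i, A_{i'}\rangle$ stabilizing a shared wing: by \autoref{sssec:action_of_reflections_on_a_wing} it acts on each photon of the wing, minus the attractor photon, as Euclidean reflections on $\bR$, hence properly discontinuously there, while any reduced word whose rightmost letter is not one of these two reflections pushes the wing point into some open $\cL_{j,s}$, after which your $\Omega^{\circ}$-type contraction applies. A complete proof must splice these two regimes for an arbitrary compact $K\subset\Omega$ (splitting a reduced word into its maximal dihedral suffix and the rest); it is only at that stage, if at all, that the quantitative contraction you flag at the end would enter.
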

\begin{proof}
	It suffices to restrict our attention to the finite index subgroup of $\tilde{\Gamma}_N$ generated by the reflections $A_i$ for $i\in I$.
	The proper discontinuity for the action on $\Omega^{\circ}$ follows from its construction in \autoref{sssec:finite_approximations_to_limit_set_and_domain_of_discontinuity} and the mapping properties of the reflections $A_i$ for the surfaces $\cW\cS_i$ and their configurations.
	The orbit of a point $x\in \Omega^{\circ}$, and a sufficiently small open set containing it, can be traced combinatorially through the open sets $\Omega_n\subset \Omega^{\circ}$ just like the corresponding orbit of a point in the hyperbolic plane for the corresponding action of the reflection group there.

	The only new points in $\Omega$ are those that belong to initial $e$-wings, plus their images.
	As explained in \autoref{sssec:action_of_reflections_on_a_wing}, the action of the dihedral group preserving that wing is properly discontinuous on the complement of the fixed photon, and if we apply a reflection not in the dihedral group, the point goes to the interior of domains $\cL_{i,s}$ and its subsequent orbit does not return to the initial neighborhood, again by the mapping properties of the cones and corresponding regions $\cL_{i,s/b}$ and reflections $A_i$.
\end{proof}



\bibliographystyle{sfilip_bibstyle}
\bibliography{cyclotomic_cones}

\end{document}